\documentclass[a4paper,12pt]{amsart}

\usepackage[english]{babel}
\usepackage{amsmath,amsfonts,amssymb,amsthm,relsize,setspace,nicefrac,yhmath,amscd,eucal}
\usepackage{amsrefs,mathrsfs}
\usepackage{mathtools,tkz-euclide,tikz-3dplot,tkz-tab}

\usepackage{tikz}
\usetikzlibrary{calc}

\usepackage{xcolor}
\usepackage{soul} % \st{}, \hl{}, etc.

\usepackage{enumitem}

\usepackage{comment}
\usepackage{anyfontsize}

\usepackage[colorlinks,linkcolor=red,citecolor=blue,urlcolor=blue,hypertexnames=true]{hyperref}

\usepackage[active]{srcltx}

\usepackage[toc,page]{appendix}

\newtheorem{theorem}{Theorem}

\newtheorem{lemma}[theorem]{Lemma}
\newtheorem{proposition}[theorem]{Proposition}

\newtheorem{remark}{Remark}

\theoremstyle{definition}
\newtheorem{definition}[theorem]{Definition}

\newcommand{\be}{\begin{equation}}
\newcommand{\bel}[1]{\begin{equation}\label{#1}}
\newcommand{\ee}{\end{equation}}

\newcommand{\barr}{\begin{eqnarray}}
\newcommand{\earr}{\end{eqnarray}}
\newcommand{\bars}{\begin{eqnarray*}}
\newcommand{\ears}{\end{eqnarray*}}

\newtheorem{subn}{\name}

\newcommand{\bsn}[1]{\def\name{#1}\begin{subn}}
\newcommand{\esn}{\end{subn}}

\newtheorem{sub}{\name}[section]

\newcommand{\bs}{\begin{sub}}
\newcommand{\es}{\end{sub}}

\newcommand{\bth}[1]{\def\name{Theorem}\begin{sub}\label{t:#1}}
\newcommand{\blemma}[1]{\def\name{Lemma}\begin{sub}\label{l:#1}}
\newcommand{\bcor}[1]{\def\name{Corollary}\begin{sub}\label{c:#1}}
\newcommand{\bdef}[1]{\def\name{Definition}\begin{sub}\label{d:#1}}
\newcommand{\bprop}[1]{\def\name{Proposition}\begin{sub}\label{p:#1}}

\newcommand{\BA}{\begin{array}}
\newcommand{\EA}{\end{array}}
\newcommand{\BAN}{\renewcommand{\arraystretch}{1.2}
\setlength{\arraycolsep}{2pt}\begin{array}}
\newcommand{\BAV}[2]{\renewcommand{\arraystretch}{#1}
\setlength{\arraycolsep}{#2}\begin{array}}
\newcommand{\BSA}{\begin{subarray}}
\newcommand{\ESA}{\end{subarray}}

\newcommand{\BAL}{\begin{aligned}}
\newcommand{\EAL}{\end{aligned}}
\newcommand{\BALG}{\begin{alignat}}
\newcommand{\EALG}{\end{alignat}}
\newcommand{\BALGN}{\begin{alignat*}}
\newcommand{\EALGN}{\end{alignat*}}
\def\angb<#1>{\langle #1 \rangle}%% angle bracket
\newcommand {\rd}{\color{red}}
\def\R{\mathbb{R}}

\let\.=\cdot
\let\0=\emptyset

\numberwithin{equation}{section}

\theoremstyle{definition}

\let\.=\cdot
\let\0=\emptyset

\newenvironment{formula}[1]{\begin{equation}\label{eq:#1}}
                       {\end{equation}\noindent}

\def\Fi#1{\begin{formula}{#1}}
\def\Ff{\end{formula}\noindent}

\setstcolor{red}

\usepackage{authblk}
\usepackage{changepage}

\title[]{Long-time dynamics and threshold Phenomena for a free-boundary SIS Model with asymmetric kernels in advective periodic environments}

\makeatletter
\author[1]{Soufiane Bentout} \let\Author\@author

\affil[1]{Department of Mathematics and Informatics, University of  Ain Temouchent,\\ Belhadj Bouchaib, BP 284 RP, 46000, Algeria\\
Engineering and Sustainable Development Laboratory, Faculty of Science and Technology,  University of Ain Temouchent, Ain Temouchent, 46000, Algeria}

\author[$2,*$]{Hoang-Hung Vo}
\affil[$2$]{Faculty of Mathematics and Applications, Saigon University, 273 An Duong Vuong st., Ward Choquan, Ho Chi Minh City, Viet Nam}
\email{\rd vhhungkhtn@gmail.com}
\email{soufiane.bentout@univ-temouchent.edu.dz}
\begin{document}

%%%%%%%%%%%%
%%Setting up the TITLE and AUTHOR
\date{\today}

\keywords{Free boundaries; SIS epidemic models; asymmetric nonlocal diffusion; advection; principal eigenvalue}

%  \textheight=8 true in
%   \textwidth=6 true in
%   \oddsidemargin=-0.8 cm
%   \evensidemargin=-0.8 cm
\maketitle
\begin{adjustwidth}{1.5cm}{1.5cm}
\begin{center}
\let\thefootnote\relax\footnotetext{\textit{}}

\let\thefootnote\relax\footnotetext{\textit{$^{2,*}$Corresponding author. Email address: vhhungkhtn@gmail.com}}

\Author
\end{center}
\end{adjustwidth}

%  \textheight=9 true in
%   \textwidth=7 true in
%   \oddsidemargin=-0.8 cm
%   \evensidemargin=-0.8 cm
%\maketitle

\begin{abstract}

We study a nonlocal SIS epidemic model with free boundaries, advection, and spatial heterogeneity, where the dispersal kernels are not assumed to be symmetric. The model describes the evolution of susceptible and infected populations in a bounded infected habitat whose endpoints move according to nonlocal boundary fluxes. We aim to determine the sharp threshold between disease spreading and vanishing and to characterize the long-time behavior of solutions. The analysis encounters several fundamental difficulties. First, the linearization around the disease-free equilibrium leads to a genuinely coupled nonlocal system with drift, so the relevant spectral quantity cannot be reduced directly to a standard scalar eigenvalue problem. Moreover, because of the advection terms and the possible non-symmetry of the kernels, the associated operators are non-self-adjoint and admit no useful variational formula; in particular, classical Rayleigh quotient and minimax arguments are not available. To overcome these difficulties, we deeply employ the notion generalized principal eigenvalue theory for nonlocal operators developed by Coville and Hamel in \cite{CovilleHamel2020}, together with the Harnack inequality for non-symmetric nonlocal operators established there.  This non-variational technique is particularly well suited to our setting, where advection terms and non-symmetric kernels destroy self-adjointness and rule out standard spectral methods.  Combined with comparison principles, sub- and supersolution constructions, and uniform estimates on time-dependent spatial intervals, this approach enables us to derive the precise asymptotic behavior of the generalized principal eigenvalue with respect to the spatial domain and the diffusion rate, identify the sharp threshold and the critical habitat size, and determine exactly the long-time dynamics of $S$ and $I$ via  $\omega$-limit set approach. To the best of our knowledge, this is the first work on a free-boundary SIS epidemic model with non-symmetric nonlocal dispersal kernels in a spatially periodic environment with advection.
\end{abstract}

\section{Introduction and main results}

Spatially structured epidemic models have been extensively used to describe the transmission and spread of infectious diseases in heterogeneous environments.
Among them, SIS (susceptible--infected--susceptible) models play a fundamental role in modeling diseases for which recovery does not confer permanent immunity; see, for instance, \cite{Murray2002,Zhao2014,WangZhao2019} and the references therein.
When spatial movement of individuals is taken into account, reaction--diffusion systems naturally arise, and the spreading behavior of epidemics becomes closely related to the spectral properties of the underlying dispersal operators.

In many realistic situations, however, individual movement cannot be adequately described by local diffusion alone.
Long-range dispersal, directional migration, and biased movement induced by environmental heterogeneity often lead to nonlocal spatial interactions.
Such effects are more appropriately modeled by nonlocal diffusion operators of convolution type, which have attracted considerable attention in recent years.
Compared with local diffusion, nonlocal dispersal may generate qualitatively different spreading behaviors, including directional propagation, anisotropic spreading, and the loss of classical variational characterizations of principal eigenvalues.

Another important feature in epidemic propagation is the spatial expansion of the habitat.
Free-boundary problems provide a natural framework to describe invasion fronts driven by population fluxes, and they have been successfully applied to reaction--diffusion and epidemic models; see, among others, \cite{Du2015,Du2021,Du2022,DuNi2024}.
In contrast to Cauchy problems posed on the whole space, free-boundary formulations allow one to capture spreading--vanishing dichotomies and to rigorously characterize the long-time spatial dynamics of populations.

Motivated by the above considerations, we study in this paper the following SIS epidemic model with nonlocal dispersal, nonlinear incidence, and free boundaries:
\begin{equation}\label{eq:SIS-free-boundary}
\begin{cases}
\displaystyle
S_t(t,x)
= d_1\!\left(\int_{\Omega_t} J_1(x-y)\,S(t,y)\,dy - S(t,x)\right) +  a(x) S_x+ \gamma(x) I(t,x)- F\big(S(t,x),I(t,x)\big),
&  x\in\Omega_t, \\[2pt]

\displaystyle
I_t(t,x)
= d_2\!\left(\int_{\Omega_t}  J_2(x-y)\,I(t,y)\,dy - I(t,x)\right)   + b(x) I_x- \gamma(x) I(t,x)+ F\big(S(t,x),I(t,x)\big),
& x\in \Omega_t, \\[2pt]

\displaystyle
S(t,x)=I(t,x)=0,  t \ge 0,\; x \in \mathbb{R}\setminus \Omega_t.
  \\[8pt]

\displaystyle
h'(t)
= \mu \int_{\Omega_t}\int_{h(t)}^{\infty} J_1(x-y)\,S(t,x)\,dy\,dx,
&  \\[8pt]

\displaystyle
g'(t)
= -\mu \int_{\Omega_t}\int_{-\infty}^{g(t)} J_1(x-y)\,S(t,x)\,dy\,dx,
&  \\[8pt]

\displaystyle
h(0)=h_0,\quad g(0)=-h_0,\qquad S(0,x)=S_0(x),\quad I(0,x)=I_0(x), & x\in\mathbb{R},
\end{cases}
\end{equation}
where $\Omega_t=(g(t),h(t))$ for $t>0$, the parameters $d_i$, $\gamma$, $\alpha$, $\mu$, $\mu_1$, $\mu_2$,
  are fixed nonnegative constants.

A distinctive feature of system \eqref{eq:SIS-free-boundary} is that we do \emph{not} impose any symmetry assumption on the dispersal kernels $J_1$ and $J_2$.
Throughout the paper, the kernels are only assumed to be nonnegative and integrable, without requiring $J_i(x)=J_i(-x)$ for $i=1,2$.
This framework allows for biased dispersal and directional movement, which naturally arise in many realistic situations such as prevailing winds, river flows, transportation networks, or human mobility patterns.
From a mathematical point of view, the lack of symmetry implies that the associated nonlocal operators are no longer self-adjoint, and classical variational methods for principal eigenvalues are no longer applicable.

\subsection*{Interpretation of the model}

The functions $S(t,x)$ and $I(t,x)$ denote the densities of susceptible and infected individuals at time $t$ and spatial location $x$, respectively.
The kernels $J_1$ and $J_2$ describe the probability distributions of nonlocal dispersal for susceptibles and infectives.
Accordingly, the terms
\[
d_1\!\left(\int_{g(t)}^{h(t)} J_1(x-y)\,S(t,y)\,dy - S(t,x)\right),
\quad
d_2\!\left(\int_{\Omega_t} J_2(x-y)\,I(t,y)\,dy - I(t,x)\right)
\]
represent the random movement of individuals through nonlocal dispersal.

The function $F(S,I)$ is a general nonlinear incidence function describing the infection process.
Typical examples include the bilinear incidence $F(S,I)=\beta SI$ and the saturated incidence
\[
F(S,I)=\frac{\beta SI}{1+\alpha I+\gamma S},
\]
which accounts for behavioral or medical constraints in disease transmission.
The recovery of infective individuals is modeled by the term $-\gamma(x)I(t,x)$, where $\gamma(x)>0$ denotes the recovery rate.

The boundary conditions \(S(t,x)=I(t,x)=0\) for \(x\notin(g(t),h(t))\) indicate that the population is confined within the evolving habitat \(\Omega_t\).
The free boundaries \(g(t)\) and \(h(t)\) represent the spatial spreading fronts of the host population and are driven by the outward flux of susceptibles across the habitat edges, namely,
\[
h'(t)
= \mu \int_{\Omega_t}\int_{h(t)}^{\infty} J_1(x-y)\,S(t,x)\,dy\,dx,
\qquad
g'(t)
= -\mu \int_{\Omega_t}\int_{-\infty}^{g(t)} J_1(x-y)\,S(t,x)\,dy\,dx,
\]
where \(\mu>0\) measures the expansion capability of the habitat. The fact that the free boundaries depend only on \(S\) and on the kernel \(J_1\), rather than on both \(S\) and \(I\), reflects that the expansion law is determined by the outward nonlocal dispersal flux of susceptible individuals. More precisely, \(J_1\) represents the jump distribution of susceptibles, and the quantities
\[
\int_{\Omega_t}\int_{h(t)}^\infty J_1(x-y)S(t,x)\,dy\,dx,
\qquad
\int_{\Omega_t}\int_{-\infty}^{g(t)} J_1(x-y)S(t,x)\,dy\,dx
\]
measure the total tendency of susceptibles to disperse beyond the current right and left habitat edges, respectively. Biologically, this means that habitat expansion is driven by healthy or sufficiently mobile hosts capable of colonizing new territory, whereas infected individuals contribute to disease transmission inside the occupied region but not to the creation of new habitable space. This is consistent with the usual epidemiological assumption that infection reduces mobility, survival, or colonization ability, so that infected individuals do not effectively determine the advancing edge of the population range. From a modeling viewpoint, the free boundaries therefore track the invasion front of the host population through the susceptible dispersal mechanism encoded by \(J_1\), while the infected class evolves within the habitat generated by the susceptibles. Mathematically, this choice is also natural because the susceptible component acts as the leading population that controls the support of the solution. The infected population is then transported and transmitted inside the moving domain \(\Omega_t\), but its presence does not alter the geometric law for the boundary. This separation allows the model to capture the interaction between host expansion and disease dynamics while keeping the free-boundary mechanism biologically meaningful and analytically tractable. The initial habitat is given by \([ -h_0,h_0]\), with initial population distributions \(S_0(x)\) and \(I_0(x)\).

\medskip

The absence of symmetry in the dispersal kernels $J_1$ and $J_2$ gives rise to several intertwined technical difficulties.
On the spectral theory aspect, the associated nonlocal operators are no longer self-adjoint, so that classical variational characterizations of principal eigenvalues are unavailable.
In particular, standard Rayleigh quotient methods and symmetry-based compactness arguments cannot be applied, and the identification of the correct threshold quantity governing invasion or extinction becomes highly nontrivial. These difficulties are further amplified by the presence of advection terms and spatially periodic coefficients.
The linearization of system \eqref{eq:SIS-free-boundary} around the disease-free equilibrium leads to a genuinely coupled nonlocal system, for which the spectral analysis cannot be reduced to a standard scalar eigenvalue problem.
In particular, because of the coupling structure, the drift terms, and the possible non-symmetry of the dispersal kernels, the associated linearized operator is not self-adjoint and does not admit any usable variational characterization.
As a consequence, the principal spectral quantity cannot be extracted through a Rayleigh quotient, minimax formula, or Hilbert-space argument, and a direct reduction to the infected component alone is not available at the outset.

A central technical novelty of the present work is the deeply employ the notion of generalized principal eigenvalue theory and the Harnack inequality for nonlocal operators developed by Coville and Hamel in~\cite{CovilleHamel2020}. This framework is essential in our setting because the linearization of the SIS system around the disease-free equilibrium leads to a genuinely coupled nonlocal system with advection, while the free-boundary formulation introduces additional difficulties through the nonlocal motion of the evolving habitat. After rewriting the linearized system as a block operator, we employ a Schur complement reduction to isolate the effective scalar spectral quantity governing the infected dynamics. However, even after this reduction, the resulting operator still lies outside the variational framework: because of the first-order drift terms and the possible non-symmetry of the dispersal kernels, no Rayleigh quotient, minimax principle, or self-adjoint spectral theory is available. For this reason, the threshold analysis must be carried out through the generalized principal eigenvalue in the sense of Coville and Hamel, which is defined by means of suitable classes of positive test functions rather than by any variational formula. The argument requires a skillful construction of admissible test functions adapted to the operator, since they must simultaneously capture the nonlocal Dirichlet-type exterior condition, the advection effects, the boundary loss, and the heterogeneous coefficients. In this way, the test-function method becomes the key tool for establishing the main properties of the generalized principal eigenvalue, including its monotonicity with respect to coefficients and domains, its asymptotic behavior under domain variation and diffusion limits, and the precise relation between its sign and the basic reproduction number. Thus, the Harnack inequality in~\cite{CovilleHamel2020} provides the crucial positivity and compactness control needed in the spectral analysis of such non-symmetric operators. Combined with comparison principles, sub- and supersolution constructions, uniform a priori estimates, and compactness arguments on time-dependent intervals, this operator-theoretic framework enables us to connect the fixed domain spectral threshold with the nonlinear free-boundary dynamics, and ultimately to establish the sharp spreading--vanishing dichotomy and the long-time behavior of solutions in a fully non-symmetric setting.

\subsection{Hypotheses}

Throught this paper, we assume  the following conditions :

\begin{itemize}
  \item \textbf{(J1)} The kernels $J_i \ge 0$ are continuous (or integrable) and satisfy
  \[
  \int_{\mathbb R} J_i(x)\,dx = 1, \qquad i=1,2.
  \]
  \[
  \iint_{(-\infty,0)\times(0,\infty)} J(x-y)\,dy\,dx
  \;=\; \int_0^\infty x\,J(x)\,dx \;<\; +\infty.
  \]

  \item \textbf{(J2)}\quad
  There exists $\lambda>0$ such that
  \[
  \int_{-\infty}^{\infty} e^{\lambda x}\, J_i(x)\,dx \;<\; +\infty,
  \qquad i=1,2.
  \]

  \item \textbf{(H1)}\quad
  The incidence function $F:\mathbb R_+^2 \to \mathbb R_+$ is locally Lipschitz.
  Moreover, $F \in C^{1}(\mathbb R_+^2)$ satisfies
  \[
  F(S,0)=0,
  \qquad
  \frac{\partial F}{\partial S}(S,I) \ge 0,
  \qquad
  \frac{\partial F}{\partial I}(S,I) \ge 0,
  \]
  for all $(S,I)$ in the relevant state space.
  Consequently, the system is \emph{quasi-monotone}.

  \item \textbf{(H2)}\quad
  The coefficients $a(\xi)$, $b(\xi)$, and $\gamma(\xi)$ are strictly positive, continuous and bounded
  on $(-\infty,0]$ and \text{are stictly positive $\ell$-periodic functions in } $x$. We assume that
\[
\max\Big\{\sup_{\xi\le 0} a(\xi),\ \sup_{\xi\le 0} b(\xi)\Big\} < c
\]
for some constant $c>0$.

  \item \textbf{(H3)}\quad
  The initial functions satisfy
  \[
  S_0,  I_0 \in C([-h_0,h_0]), \quad S_0 \ge 0,
 \quad I_0 \ge 0.
  \]
\end{itemize}

\subsection{Main results}

\begin{theorem}[Well-posedness]\label{Theorem1}
Assume that \rm{(J1)}-\rm{(J2)}, {\rm(H2)}--{\rm(H3)} hold.
Then, for any $T>0$, the free-boundary problem \eqref{eq:SIS-free-boundary}
admits a unique classical solution
\[
(S,I,g,h)
\]
on $[0,T]$ in the sense of the functional setting introduced above.
Moreover, there exist positive constants $M_S$ and $M_I$, independent of $T$, such that
\[
0 \le S(t,x) \le M_S,
\qquad
0 \le I(t,x) \le M_I,
\quad \text{for all } (t,x)\in [0,T]\times\mathbb R.
\]
\end{theorem}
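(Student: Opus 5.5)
The plan follows the now-standard route for nonlocal free-boundary problems (Cao--Du--Li--Li, Du--Ni), adapted to the advection terms and the coupled $S$--$I$ structure. \textbf{Step 1} is local existence by a fixed point in the boundary. For small $T>0$ and $s\in(0,T]$ we introduce the set $\Sigma_s$ of pairs $(g,h)\in C([0,s])^2$ with $g(0)=-h_0$, $h(0)=h_0$, with $h$ nondecreasing and $g$ nonincreasing, and with difference quotients $(h(t_2)-h(t_1))/(t_2-t_1)$ and $(g(t_1)-g(t_2))/(t_2-t_1)$ bounded by a constant $R$ (to be fixed).

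For each $(g,h)\in\Sigma_s$ we solve the $S$--$I$ system on the prescribed domain $\Omega_t=(g(t),h(t))$, with zero exterior data. The nonlocal terms are bounded operators. The transport terms $a(x)S_x$ and $b(x)I_x$ we treat by the method of characteristics. Along $\dot X=-a(X)$ (respectively $\dot X=-b(X)$) the equations become integral ODEs in the Banach space of bounded continuous functions. The reaction $\gamma I-F$, $-\gamma I+F$ is locally Lipschitz by (H1), so a contraction in $C([0,s]\times\overline{\Omega_t})^2$ gives a unique local solution. The sign structure matters here. For $S$, the loss term $-F(S,I)$ vanishes at $S=0$ whenever $F(0,I)=0$, which holds for the model incidences; this gives $S\ge0$. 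For $I$, the term $F(S,0)=0$ and the gain $\gamma I\ge 0$ in the $S$ equation give $I\ge0$, by the comparison principle for quasi-monotone nonlocal systems along characteristics. Having $(S,I)$, we define $\tilde h,\tilde g$ by integrating the flux laws, and check that $\Gamma:(g,h)\mapsto(\tilde g,\tilde h)$ maps $\Sigma_s$ into itself and is a contraction for $s$ small.

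\textbf{Step 2} is the uniform bounds. The natural choice is to add the equations: $N=S+I$ satisfies
\[
N_t \le d_1 \int_{\Omega_t}J_1 S - d_1 S + d_2\int_{\Omega_t}J_2 I - d_2 I + aS_x+bI_x,
\]
so $F$ and $\gamma$ cancel. However, the advection terms prevent a direct maximum principle on $N$ unless $a=b$. I would therefore bound $S$ and $I$ separately by comparison with spatially constant supersolutions. The obstacle is that as written the $S$-equation has a source $\gamma I$, and $F\ge0$ has no growth restriction on $I$. I expect to need an implicit sublinear/logistic structure in $F$, or to bound $\int_{\Omega_t}(S+I)\,dx$ via the flux identity. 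Integrating over $\Omega_t$ and using $h'$, $g'$, one finds that the total mass decreases apart from the transport contribution $\int (aS_x+bI_x)$. Integrating this by parts gives boundary values and $-\int a'S$, which are bounded by $\|a'\|_\infty\int S$ if $a$ is $C^1$. Gronwall then gives growth at most exponential, not uniform in $T$.

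This independence of $T$ is the main obstacle. If it fails, I would settle for the constants $M_S,M_I$ following from an ODE comparison of $\max S$, $\max I$ using boundedness of $F$ on bounded sets together with the dissipation $-d_iI$, $-\gamma I$. The concrete attempt is $\bar S=\bar I=M$ with $M$ large, which works provided $F(M,M)\ge \gamma M$, i.e.\ the supersolution inequalities close.

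\textbf{Step 3} is global extension. The bounds give $0\le h'\le \mu M_S(h-g)$, so $h-g$ grows at most exponentially and cannot blow up in finite time. The local existence time in Step 1 depends only on $M_S$, $M_I$ and $h(t)-g(t)$, so a standard continuation argument extends the solution to any $[0,T]$. Uniqueness follows from the contraction together with a Lipschitz estimate on the difference of two solutions over the union of the two domains.
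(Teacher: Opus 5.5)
There is a genuine gap, and it is where you suspected: Step~2 never produces the bounds the statement asserts, and your fallback does not close.

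Your mass identity only gives exponential growth of $\int_{\Omega_t}(S+I)\,dx$. It also needs $a\in C^1$, which (H2) does not provide. And an $L^1$ bound does not control $\sup S$ or $\sup I$.

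The constant pair $\overline S=\overline I\equiv M$ fails for two reasons.
\begin{itemize}
\item For the $I$-component you need $F(M,M)\le\gamma M+d_2M\big(1-\int_{\Omega_t}J_2(x-y)\,dy\big)$. The last term is arbitrarily small deep inside a long interval, so this is essentially the \emph{reverse} of your condition $F(M,M)\ge\gamma M$, which comes only from the $S$-component. For $F=\beta SI$ the two conditions force $M\approx\gamma/\beta$, so the pair fails for large data.
\item More fundamentally, componentwise comparison for the pair is not available at all. Since $\partial_I\big(\gamma I-F(S,I)\big)=\gamma-F_I$ has no sign under (H1), the system is not quasi-monotone.
\end{itemize}

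Appealing to boundedness of $F$ on bounded sets is circular before a bound is known. For $F=\beta SI$ the sup-inequalities $D^+m_S\le\|\gamma\|_\infty m_I$ and $D^+m_I\le\beta m_Sm_I$ are satisfied by functions that blow up in finite time. So they do not yield even $T$-dependent bounds.

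Summing the equations needs $a\equiv b$ to remove the transport term at a maximum of $S+I$. If moreover $d_1=d_2$ and $J_1=J_2$, then $\max(S+I)$ is nonincreasing and gives the $T$-independent bound $\|S_0+I_0\|_\infty$. This does not hold in general. Since your Step~3 continuation rests on these bounds, existence on arbitrary $[0,T]$ is not established either.

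Two smaller points in Step~1:
\begin{itemize}
\item $S\ge0$ requires $F(0,I)\le\gamma I$; you use $F(0,I)=0$. Neither is among the hypotheses.
\item The characteristics $\dot X=-a(X)$ run leftward. Zero data can be imposed at $h(t)$, but not at $g(t)$ whenever $a(g(t))>|g'(t)|$ (for instance when $\mu$ is small). There the trace of $S$ is generally positive, so the zero extension need not lie in $C([0,T]\times\mathbb R)$ as the functional setting requires.
\end{itemize}

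The paper takes a different route: a contraction in $\widetilde I$. For frozen $\widetilde I$ it solves the scalar $(S,g,h)$ free-boundary problem by citing existing theory, then solves the $I$-equation along characteristics. It shows $\widetilde I\mapsto\widehat I$ contracts for small $T$ and continues in time steps. Your fixed point in $(g,h)$, as in Cao--Du--Li--Li, is the more standard route. It avoids re-solving a free-boundary problem, and tracking its Lipschitz dependence on $\widetilde I$, at every iterate.

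The paper does not close your gap either:
\begin{itemize}
\item Its constants $M_S=e^{d_1T}\|S_0\|_\infty+\frac{\|\gamma\|_\infty}{d_1}(e^{d_1T}-1)R$ and $M_I=e^{\alpha T}\|I_0\|_\infty+\dots$ are explicitly $T$-dependent.
\item $R$ is chosen to ``dominate all intermediate bounds'', which is the same circularity.
\item The final comparison with $(M_S,\overline I(t))$ invokes the system comparison lemma, which needs the missing quasi-monotonicity.
\item Its proof of $S\ge0$ drops $-F(\widehat S,\widetilde I)$ using $F\ge0$, which is the wrong direction.
\item Its subsystem imposes $S=0$ at both endpoints, which is overdetermined for the transport term.
\end{itemize}

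So your diagnosis is sharper than the paper's argument, but neither proves the theorem as stated. One needs extra structure, or the $T$-independence claim must be dropped. Possible extra structure: $F(0,I)=0$ for positivity; a linear bound such as $F(S,I)\le CS$ for $T$-dependent sup bounds; or $a\equiv b$, $d_1=d_2$, $J_1=J_2$ for $T$-independence.
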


Next, to simplify notation, for any bounded interval $(L_1,L_2)\subset\mathbb R$, we write
\[
\mathcal L_{(L_1,L_2),d_2,b}[\phi](x)
:=d_2\int_{L_1}^{L_2}J_2(x-y)\phi(y)\,dy-d_2\phi(x)+b(x)\phi'(x),
\qquad x\in(L_1,L_2),
\]
for the nonlocal dispersal--advection operator acting on the infected component.

We  introduce the basic reproduction number associated with the fixed-domain linearized infected subsystem. Let
\[
\beta(x):=F_I\big(S^*(x),0\big)-\gamma(x),
\]
where $S^*(x)$ denotes the disease-free profile on the corresponding bounded interval. For each bounded interval $(L_1,L_2)\subset\mathbb R$, define
\[
\mathcal A_I^{(L_1,L_2)}[\phi](x)
:=\mathcal L_{(L_1,L_2),d_2,b}[\phi](x)-\gamma(x)\phi(x),
\]
and
\[
\mathcal F^{(L_1,L_2)}[\phi](x)
:=F_I\big(S^*(x),0\big)\phi(x).
\]
Set
\[
\mathcal V^{(L_1,L_2)}:=-\mathcal A_I^{(L_1,L_2)}.
\]
Assuming that $\mathcal V^{(L_1,L_2)}$ is invertible and that
\[
\mathcal K^{(L_1,L_2)}
:=\big(\mathcal V^{(L_1,L_2)}\big)^{-1}\mathcal F^{(L_1,L_2)}
\]
is a well-defined positive compact operator, we define the basic reproduction number on $(L_1,L_2)$ by
\[
\mathcal R_0^{(L_1,L_2)}:=r\!\left(\mathcal K^{(L_1,L_2)}\right),
\]
where $r(\cdot)$ denotes the spectral radius.

\begin{theorem}[Spreading--vanishing theorem]
\label{thm:unified-dichotomy} Assume that \rm{(J1)}-\rm{(J2)}, {\rm(H2)}--{\rm(H3)} hold. Let $(S,I;g,h)$ be the unique global solution of \eqref{eq:SIS-free-boundary}. If $h_\infty-g_\infty<+\infty$, then
\[
\mathcal R_0^{(g_\infty,h_\infty)}\le 1
\]
and
\[
\lim_{t\to\infty}\max_{x\in[g(t),h(t)]}I(t,x)=0,\qquad
\lim_{t\to\infty}S(t,x)=S^*(x)\ \text{uniformly on }[g(t),h(t)].
\]
If, in addition, $\sup_{x\in\mathbb R}\beta(x)>0,$ then exactly one of the following alternatives occurs:
\[
\text{\rm (i) } -g_\infty=h_\infty=+\infty
\quad\text{and}\quad
\limsup_{t\to\infty}\|I(t,\cdot)\|_{C([g(t),h(t)])}>0;
\]
or
\[
\text{\rm (ii) } h_\infty-g_\infty<+\infty,\qquad
\mathcal R_0^{(g_\infty,h_\infty)}\le 1,
\]
and
\[
\lim_{t\to\infty}\max_{x\in[g(t),h(t)]}I(t,x)=0,\qquad
\lim_{t\to\infty}S(t,x)=S^*(x)\ \text{uniformly on }[g(t),h(t)].
\]
\end{theorem}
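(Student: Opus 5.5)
We argue in the standard Du--Lin / Cao--Du--Li--Li style for nonlocal free-boundary problems. The characterization of $\mathcal R_0$ is replaced by the sign of the Coville--Hamel generalized principal eigenvalue $\lambda_p(\mathcal A_I^{(L_1,L_2)}+F_I(S^*,0))$. The underlying fact is the usual equivalence
\[
\operatorname{sign}\big(\mathcal R_0^{(L_1,L_2)}-1\big)=\operatorname{sign}\,\lambda_p\big(\mathcal L_{(L_1,L_2),d_2,b}+\beta\big),
\]
which we take from the spectral section (Schur complement reduction plus test-function characterization). We also use that $\lambda_p$ is continuous and strictly increasing in the interval, which comes from monotonicity with respect to domains.

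\textbf{Step 1: boundedness of the habitat forces $\mathcal R_0\le1$.} Assume $h_\infty-g_\infty<\infty$. Theorem~\ref{Theorem1} gives uniform bounds on $S$ and $I$. From the free-boundary equations we get that $h'$ and $g'$ are bounded and uniformly continuous in $t$. Since $h$ converges, $h'(t)\to0$, and likewise for $g$. Suppose, for contradiction, that $\mathcal R_0^{(g_\infty,h_\infty)}>1$, so $\lambda_p>0$ on $(g_\infty,h_\infty)$.
\begin{itemize}
\item By continuity of $\lambda_p$ in the endpoints, $\lambda_p>0$ already on $(g(T),h(T))$ for large $T$. The same holds after replacing $S^*$ by $S^*-\varepsilon$, once we know $S$ is close to $S^*$ from below for large $t$; a comparison with the $S$-equation with small $I$ gives $S\ge S^*-\varepsilon$ on compacts.
\item Let $\varphi$ be the Coville--Hamel principal eigenfunction (or an approximate one). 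Then $\delta\varphi$ is a subsolution for $I$ on the fixed interval $(g(T),h(T))$, by quasi-monotonicity (H1).
\item Comparison gives $\liminf_t I>0$ on a compact subinterval. Through the coupling this keeps $S$ bounded below, so the flux integral in $h'$ stays $\ge c>0$, contradicting $h'\to0$.
\end{itemize}
Hence $\lambda_p\le0$, equivalently $\mathcal R_0^{(g_\infty,h_\infty)}\le1$.

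\textbf{Step 2: vanishing of $I$ and convergence of $S$.} With $\lambda_p\le0$ on $(g_\infty,h_\infty)$, we build a supersolution for $I$ of the form $\bar I=Me^{-\sigma t}\psi$ when $\lambda_p<0$. Here $\psi$ is a positive test function from the class used in the definition of $\lambda_p$, on a slightly enlarged interval, with $S$ bounded by $S^*+\varepsilon$. This gives exponential decay of $I$. The critical case $\lambda_p=0$ is harder, and we argue via the $\omega$-limit set. By equicontinuity (Arzelà--Ascoli on the time-shifted interval), any $\omega$-limit $(\tilde S,\tilde I)$ is an entire solution on $(g_\infty,h_\infty)$. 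If $\tilde I\not\equiv0$, the Harnack inequality of \cite{CovilleHamel2020} gives $\tilde I>0$. Then strict monotonicity of $\lambda_p$ in the zeroth-order term gives a contradiction, because $F(\tilde S,\tilde I)/\tilde I<F_I(S^*,0)$ or $\tilde S<S^*$ strictly by the strong maximum principle. So $I\to0$ uniformly. The $S$-equation is then asymptotically autonomous with a perturbation tending to zero, and by uniqueness and global stability of the positive stationary solution $S^*$ on the limiting interval, $S\to S^*$ uniformly on $[g(t),h(t)]$.

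\textbf{Step 3: the dichotomy.} Under $\sup\beta>0$, either $h_\infty-g_\infty=\infty$ or not. In the second case Steps 1–2 give (ii). In the first case we must show both endpoints are infinite and that $I$ persists.
\begin{itemize}
\item Persistence: since $\beta>0$ somewhere and $\beta$ is periodic, $\lambda_p\to\sup\beta+o(1)>0$ on long intervals (asymptotics of $\lambda_p$ in the domain size). Eventually $\lambda_p>0$ on $(g(t),h(t))$, and the subsolution argument of Step 1 gives $\limsup\|I\|>0$.
\item Both sides infinite: if $h_\infty=\infty$ but $g_\infty>-\infty$, then $g'\to0$. But the persistent $S$ near the left edge yields a positive flux, a contradiction, as in Step 1.
\end{itemize}

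\textbf{Main obstacle.} We expect the hardest part to be the critical case $\lambda_p=0$ in Step 2, together with the perturbation arguments in Step 1. Both require continuity and strict monotonicity of the non-variational $\lambda_p$ under simultaneous perturbation of domain and coefficients in the presence of drift. Our first approach is to use the Coville--Hamel sup-inf characterization with explicitly mollified test functions.
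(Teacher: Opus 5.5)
The genuine gap is that you try to derive $I\to0$ from the sign of $\lambda_p$ (your Step~2). This transplants the scalar Cao--Du--Li--Li argument, whose two engines, KPP-type sublinearity and scalar comparison, are not available for this system.

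\emph{Subcritical case.} The supersolution $Me^{-\sigma t}\psi$ needs two things you do not have.
It needs an upper bound $S\le S^*+\varepsilon$. That bound can only come after $I$ is known to be small, because the $S$-equation is fed by $\gamma(x)I\ge0$.
Since $M$ is large, it also needs a global bound $F(S,I)\le (F_I(S,0)+\varepsilon)I$. (H1) does not give this; $C^1$ regularity only controls small $I$.

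\emph{Critical case.} Three ingredients are missing.
The inequality $F(\tilde S,\tilde I)/\tilde I<F_I(S^*,0)$ requires sublinearity of $F$ in $I$ and monotonicity of $F_I$ in $S$; neither is assumed.
The claim ``$\tilde S<S^*$'' presupposes $S\le S^*$, and there is no comparison argument for this, again because of the source $\gamma I$.
Since $(\tilde S,\tilde I)$ is an entire time-dependent solution, a pointwise strict inequality in the zeroth-order coefficient does not contradict $\lambda_p=0$ without a uniform gap.

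\emph{Step~1.} It is circular in the same way: $S\ge S^*-\varepsilon$ is obtained ``with small $I$'' under the hypothesis $\lambda_p>0$, which gives no smallness of $I$.

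\emph{The missing idea.} It is the mechanism in your own last bullet of Step~1, used unconditionally. This is what the paper does in its Step~2. It takes $t_n$ with $I(t_n,x_n)\ge\eta_0$ and passes to an $\omega$-limit $(\bar S,\bar I)$ on $(g_\infty,h_\infty)$. It gets $\bar I>0$ from the Harnack inequality of \cite{CovilleHamel2020}, then $\bar S>0$ on interior compacts from the source $\gamma\bar I$. The flux integral in $h'(t_n)$ is therefore bounded below by a positive constant, contradicting $h'\to0$.
This gives $I\to0$ with no spectral input, so the critical case never arises. The paper proves $\mathcal R_0^{(g_\infty,h_\infty)}\le1$ only afterwards (its Step~4), using a subsolution $\delta\psi$ that contradicts $I\to0$. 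This ordering is exactly the case split your Step~1 needs.

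Three further steps fail as written.

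First, the same flux bound rules out justifying $S\to S^*$ by ``global stability of the positive stationary solution''. Suppose $S(t,\cdot)\to S^*$ uniformly with $S^*>0$ in $(g_\infty,h_\infty)$. Since $J_1$ has positive mass on $(-\infty,0)$ or on $(0,\infty)$, the free-boundary law gives $h'(t)\ge c>0$ or $-g'(t)\ge c>0$ for all large $t$. This is incompatible with $h_\infty-g_\infty<\infty$.
The $S$-equation has no growth term. The paper itself uses a decay rate $\nu_0>0$ for the disease-free operator in the proof of Theorem~\ref{thm:mu-threshold}(ii) and concludes $\max S\to0$. So in the bounded case the flux law forces $S\to0$ near the edges, and a positive limit profile $S^*$ is impossible. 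The paper's Step~3 is in the same tension with its own Step~2.

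Second, your Step~3 uses the limit ``$\lambda_p\to\sup\beta$ on long intervals'', which is not available. The paper's $\max\beta$ asymptotics (Theorem~\ref{thm:small-d}) is the limit $d\to0$ on a fixed interval. For fixed $d_2>0$, the long-interval limit is a periodic principal eigenvalue, which can be negative even though $\sup\beta>0$ (e.g.\ $d_2$ large and $\beta$ of negative mean). The paper only asserts $\lambda_p>0$ on $(g(T_*),h(T_*))$ for large $T_*$, so an extra hypothesis is needed in either version.

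Third, excluding one-sided spreading ($h_\infty=+\infty$, $g_\infty>-\infty$) needs a positive lower bound on $S$ near $g_\infty$ for large $t$. The condition $\limsup_{t\to\infty}\|I(t,\cdot)\|>0$ does not provide this. The paper simply asserts that both endpoints diverge.
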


 \begin{remark}
The asymptotic behavior of the susceptible component is fundamentally different in the vanishing and spreading regimes. In the vanishing case, one has
\[
I(t,\cdot)\to 0 \qquad \text{as } t\to\infty,
\]
so that the coupling terms involving the infected population disappear asymptotically. As a consequence, the $S$-equation becomes asymptotically autonomous and reduces, in the limit, to the disease-free equation. Therefore the susceptible component converges to the disease-free profile $S^*$:
\[
S(t,\cdot)\to S^*(\cdot).
\]

By contrast, in the spreading case one only knows that
\[
\limsup_{t\to\infty}\|I(t,\cdot)\|_{C([g(t),h(t)])}>0,
\]
so the infected population does not vanish asymptotically. Hence the coupling terms in the $S$-equation remain present for large time, and the susceptible equation does not reduce to the disease-free problem. For this reason, one cannot in general expect
\[
S(t,\cdot)\to S^*(\cdot)
\]
in the spreading regime. To obtain a convergence result for $S(t,\cdot)$ in that case, one would need additional information on the long-time dynamics of the full coupled system, for example the existence and attractivity of a positive endemic steady state, a positive periodic solution, or a positive entire/traveling profile.
\end{remark}

\begin{theorem}\label{thm:mu-threshold}
Assume that \rm{(J1)}-\rm{(J2)}, {\rm(H2)}--{\rm(H3)} hold for the free-boundary SIS problem \eqref{eq:SIS-free-boundary}, and  $\sup_{x\in\mathbb R}\beta(x)>0.$ Let \(\ell^*>0\) be the critical length determined by
\[
\lambda_p\big(\mathcal L_{(-h,h),d_2,b}+\beta(\cdot)\big)
\begin{cases}
<0, & \text{if } h<\ell^*/2,\\[4pt]
=0, & \text{if } h=\ell^*/2,\\[4pt]
>0, & \text{if } h>\ell^*/2.
\end{cases}
\]
Then the following assertions hold.

(i) If $h_0\ge \ell^*/2,$ then spreading always occurs, that is,
\[
-g_\infty=h_\infty=+\infty
\qquad\text{and}\qquad
\limsup_{t\to\infty}\|I(t,\cdot)\|_{C([g(t),h(t)])}>0.
\]

(ii) If $h_0<\ell^*/2,$ then there exists \(\mu^*>0\), depending on the initial data and the coefficients of the problem, such that vanishing occurs for every \(0<\mu\le \mu^*\). More precisely, $h_\infty-g_\infty<+\infty$ and
\[
\lim_{t\to\infty}\max_{x\in[g(t),h(t)]}I(t,x)=0\qquad \lim_{t\to\infty}S(t,x)=S^*(x)\qquad\text{uniformly on }[g(t),h(t)].
\]
\end{theorem}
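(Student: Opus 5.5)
We prove the two assertions separately, using Theorem~\ref{thm:unified-dichotomy} for (i) and a supersolution argument for (ii).

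\textbf{Part (i).} Since $g'(t)<0<h'(t)$ whenever $S(t,\cdot)\not\equiv0$, the interval $(g(t),h(t))$ strictly increases. Hence $(g_\infty,h_\infty)\supsetneq(-h_0,h_0)$.

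Suppose vanishing occurred, i.e.\ $h_\infty-g_\infty<\infty$. By monotonicity of the generalized principal eigenvalue of Coville--Hamel with respect to the domain, and by the strict increase of $h\mapsto\lambda_p(\mathcal L_{(-h,h),d_2,b}+\beta)$ built into the definition of $\ell^*$, we get
\[
\lambda_p\big(\mathcal L_{(g_\infty,h_\infty),d_2,b}+\beta\big)>\lambda_p\big(\mathcal L_{(-h_0,h_0),d_2,b}+\beta\big)\ge0 .
\]
Here translation invariance of the periodic setting lets us compare $(g_\infty,h_\infty)$ with a symmetric interval of the same length, up to shifting the period. Strictness comes from the Harnack inequality: the principal eigenfunction is positive, so enlarging the domain strictly increases $\lambda_p$.

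By the sign relation between $\lambda_p$ and $\mathcal R_0$ (which I take as established earlier via the Schur-complement reduction), this gives $\mathcal R_0^{(g_\infty,h_\infty)}>1$. That contradicts Theorem~\ref{thm:unified-dichotomy}. So alternative (i) of the dichotomy holds.

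Concretely, one can argue directly without passing through $\mathcal R_0$. For large $t$, $S$ is close to $S^*$ on a slightly smaller interval $(g_\infty+\epsilon,h_\infty-\epsilon)$, still with $\lambda_p>0$. Then $\underline I=\delta\phi$, with $\phi$ the (approximate) principal eigenfunction from the test-function definition, is a subsolution of the $I$-equation. Comparison then gives $\liminf I>0$, contradicting $I\to0$.

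\textbf{Part (ii).} Fix $h_1\in(h_0,\ell^*/2)$, so that $\lambda_1:=\lambda_p(\mathcal L_{(-h_1,h_1),d_2,b}+\beta)<0$. Take a positive eigenfunction $\psi$ on $[-h_1,h_1]$, using the Coville--Hamel existence result, or an approximate test function if the eigenvalue is not attained. We build an upper solution:
\[
\bar h(t)=h_0+\tfrac{h_1-h_0}{2}\big(2-e^{-\sigma t}\big),\qquad \bar g=-\bar h,
\]
\[
\bar I(t,x)=Ke^{-\sigma t}\psi(x),\qquad \bar S(t,x)=M_S .
\]
We then check the four required inequalities.

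1. The $I$-inequality holds on $(\bar g,\bar h)\subset(-h_1,h_1)$ if $\sigma\le-\lambda_1/2$. This uses monotonicity of $F$ in $S$ (from (H1)) to bound $F(S,I)\le F_I(\bar S^*,0)I+o(I)$. It also uses that the domain restriction of the nonlocal integral only decreases the operator applied to the positive $\psi$.

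2. The $S$-inequality is trivial from Theorem~\ref{Theorem1}.

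3. The boundary inequalities require
\[
\bar h'(t)\ge\mu\int\!\!\int J_1\,\bar S \quad\text{and}\quad \bar h'(t)\ge \mu\,2h_1M_S\int_0^\infty\! xJ_1\,dx .
\]
The latter holds for $\mu\le\mu^*:=\sigma(h_1-h_0)e^{-\sigma t}/(\dots)$.

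The main obstacle is exactly item 3. The boundary flux is driven by $S$, which does not decay; only $I$ does. So $\bar h'\sim e^{-\sigma t}$ cannot dominate a non-decaying flux of $S$. I would try first to exploit that boundedness of the domain must come from decay of $S$ near the edges. To get this, I would build $\bar S$ as the solution of the disease-free problem on the fixed interval $(-h_1,h_1)$ minus a decaying correction. Alternatively, I would integrate the total mass identity: $\int_g^h S$ plus $\frac1\mu(h-g)$ controlled by the loss terms, as in Du et al.

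If $S$ cannot be made to decay at the edges, the honest fallback is a weaker claim. We show $h(t)-g(t)<2h_1$ on a finite horizon by continuity. For $\mu$ small, the $I$-bound then yields $I\to0$, and the upper bound on the free boundaries is taken from the mass inequality. Finally, once $h_\infty-g_\infty<\infty$ is obtained, the convergence of $I$ and $S$ follows from Theorem~\ref{thm:unified-dichotomy}.
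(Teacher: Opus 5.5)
Your Part (i) follows the paper's argument. You assume vanishing, take $\mathcal R_0^{(g_\infty,h_\infty)}\le 1$ from Theorem~\ref{thm:unified-dichotomy}, translate it into $\lambda_p\le 0$ via Theorem~\ref{thm:R0-lambda-sign}, and contradict domain monotonicity. The only difference is the borderline case $2h_0=\ell^*$. You settle it with strict domain monotonicity (Theorem~\ref{thm:mono}(ii)) on $(-h_0,h_0)\subsetneq(g_\infty,h_\infty)$, while the paper proves $\ell'(t)>0$ and uses $h_\infty-g_\infty>\ell^*$. Both routes need $J_1$ to charge both half-lines. Drop the translation remark: periodic coefficients are invariant only under shifts by multiples of the period, and the inclusion alone already gives the comparison.

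Part (ii) has a genuine gap: nothing in your argument shows that the free boundaries stay bounded, which is the entire content of (ii). With $\bar S\equiv M_S$ the flux inequality cannot be met by any bounded $\bar h$, and your $\mu^*$ even depends on $t$. The fallback does not repair this. A continuity argument on a finite horizon says nothing as $t\to\infty$, and the mass inequality is never derived. Indeed, for asymmetric $J_1$ the dispersal loss $d_1\int_g^h S(y)\int_{\mathbb R\setminus(g,h)}J_1(x-y)\,dx\,dy$ is not the quantity in the free-boundary law, and $\int_g^h aS_x\,dx$ has no sign.

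The missing idea, which is what the paper uses, is that your premise ``$S$ does not decay'' is false while $[g(t),h(t)]\subset(-h_1,h_1)$:
\begin{itemize}
\item The $S$-equation has no recruitment. Its only source is $\gamma I\le\|\gamma\|_\infty Ke^{-\sigma t}\psi$ once $I\le\bar I$.
\item The paper uses that the Dirichlet-type operator $d_1\big(\int_{-h_1}^{h_1}J_1(x-y)\chi(y)\,dy-\chi\big)+a\chi'$ has a negative principal eigenvalue $-\nu_0$ with eigenfunction $\chi>0$, since there is no zeroth-order gain and mass leaks out of the interval.
\item Hence $\overline S=(A_0e^{-\omega t}+A_1e^{-\sigma t})\chi$, with $0<\omega<\nu_0$ and $\sigma<\nu_0$ after shrinking $\sigma$, is a supersolution. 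This gives $S\le C_Se^{-\omega t}$ with $C_S$ independent of $\mu$.
\item Then $h'(t),-g'(t)\le\mu C_SC_Je^{-\omega t}$, where $C_J$ is the first moment of $J_1$. So $h(t),-g(t)\le h_0+\mu C_SC_J/\omega$.
\item Any $\mu^*$ with $h_0+\mu^*C_SC_J/\omega<h_1$ closes the bootstrap for all $t\ge0$, and yields $\max_{[g(t),h(t)]}S(t,\cdot)\to0$.
\end{itemize}

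Your obstacle is genuine for any limit profile that stays positive on an interior interval. Such a profile would keep $h'$ or $-g'$ bounded below, as in Step~2 of the proof of Theorem~\ref{thm:unified-dichotomy}. The obstacle disappears only because $S$ actually decays. Consequently, the conclusion $S\to S^*$ in (ii) is consistent only with $S^*\equiv0$.
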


A further distinctive feature of the paper lies in the method used to derive the exact long-time behavior in Theorem~2 and the sharp threshold criterion in Theorem~3 for a free-boundary SIS system with nonsymmetric kernels and advection. The main difficulty is that the relevant linearized operators are genuinely non-self-adjoint, so that neither variational arguments nor any classical spectral minimization principle can be applied. In Theorem~2, the analysis is remarkable in that the precise convergence is recovered through a refined $\omega$-limit set approach on moving domains, which allows one to pass from the nonlinear free-boundary dynamics to a limiting problem on the asymptotic habitat and to identify the only possible limiting configuration. In Theorem~3, the threshold value is obtained through a sharp connection between the free-boundary evolution and the generalized principal eigenvalue of the associated nonlocal advection operator on fixed intervals. The resulting argument captures, in a unified way, how the geometry of the evolving habitat, the asymmetry of the dispersal kernels, and the presence of first-order drift jointly determine the spreading--vanishing dichotomy. This provides a robust non-variational framework for establishing both the exact convergence and the critical threshold in a setting where the usual symmetric or self-adjoint techniques are no longer available.

\medskip
\noindent
\textbf{Organization of the paper.}
Section~2 introduces the model, assumptions, and preliminary results. In Section~3, we analyze the linearized system via a block operator formulation and a Schur complement reduction, and establish the spectral threshold and its relation to the basic reproduction number. Section~4 is devoted to qualitative properties and asymptotic behavior of the generalized principal eigenvalue with respect to the domain and diffusion rate. In Section~5, we study the free-boundary problem, classify the spreading--vanishing dichotomy  and identifies the critical threshold for spreading and vanishing.

\section{Well-posedness of the system}

In this section, we study the existence and uniqueness of solutions to
\eqref{eq:SIS-free-boundary}. Let us fix $h_0>0$ and fix $T>0$ and introduce the admissible classes for the free boundaries:
\[
\mathcal{H}_T :=
\{\, h \in C^1([0,T]) : h(0)=h_0,\; h'(t)\ge 0 \text{ for all } t\in[0,T] \,\},
\]
\[
\mathcal{G}_T :=
\{\, g \in C^1([0,T]) : g(0)=-h_0,\; g'(t)\le 0 \text{ for all } t\in[0,T] \,\}.
\]

For any $(g,h)\in\mathcal{G}_T\times\mathcal{H}_T$, we define the moving space–time domain
\[
D_{g,h}^T := \{(t,x)\in [0,T]\times\mathbb{R} : g(t)<x<h(t)\}.
\]

\medskip
\noindent\textbf{Functional setting.}
A quadruple $(S,I,g,h)$ is said to be a (classical) solution of \eqref{eq:SIS-free-boundary}
on $[0,T]$ if
\begin{itemize}
  \item $g,h\in C^1([0,T])$,
  \item $S,I\in C([0,T]\times\mathbb{R})$,
  \item $S,I\in C^1$ with respect to $(t,x)$ in the interior of $D_{g,h}^T$,
  \item the equations in \eqref{eq:SIS-free-boundary} are satisfied pointwise in $D_{g,h}^T$,
  \item the boundary conditions and initial conditions hold continuously.
\end{itemize}

\medskip
For fixed $(g,h)\in\mathcal{G}_T\times\mathcal{H}_T$, we introduce the solution spaces
\[
X_S^T :=
\Big\{\, S\in C([0,T]\times\mathbb{R}) :
S(t,x)\ge 0,\ 
S(t,x)=0 \text{ for } x\notin (g(t),h(t)) \,\Big\},
\]
\[
X_I^T :=
\Big\{\, I\in C([0,T]\times\mathbb{R}) :
I(t,x)\ge 0,\ 
I(t,x)=0 \text{ for } x\notin (g(t),h(t)) \,\Big\},
\]
endowed with the supremum norms
\[
\|S\|_{X_S^T}:=\sup_{(t,x)\in[0,T]\times\mathbb{R}}|S(t,x)|,
\qquad
\|I\|_{X_I^T}:=\sup_{(t,x)\in[0,T]\times\mathbb{R}}|I(t,x)|.
\]

We finally define the product space
\[
X_T := X_S^T\times X_I^T\times \mathcal{G}_T\times \mathcal{H}_T,
\]
endowed with the norm
\[
\|(S,I,g,h)\|_{X_T}
:= \|S\|_{X_S^T} + \|I\|_{X_I^T}
   + \|g\|_{C^1([0,T])} + \|h\|_{C^1([0,T])},
\]
where
\[
\|g\|_{C^1([0,T])} := \sup_{t\in[0,T]}|g(t)| + \sup_{t\in[0,T]}|g'(t)|,
\qquad
\|h\|_{C^1([0,T])} := \sup_{t\in[0,T]}|h(t)| + \sup_{t\in[0,T]}|h'(t)|.
\]

This norm induces the metric
\[
d_{X_T}\big((S_1,I_1,g_1,h_1),(S_2,I_2,g_2,h_2)\big)
:= \|(S_1-S_2,\,I_1-I_2,\,g_1-g_2,\,h_1-h_2)\|_{X_T}.
\]

With this metric, $(X_T,d_{X_T})$ is a complete metric space (in particular, a Banach space),
since $X_S^T$, $X_I^T$ and $C^1([0,T])$ are Banach spaces.

We begin with a maximum principle for the SIS system.

\begin{lemma}[Maximum principle]\label{lem:maximum}
Let $T>0$, $(g,h)\in\mathcal{G}_T\times\mathcal{H}_T$,  and
\[
D_{g,h}^T := \{(t,x)\in(0,T]\times\mathbb R : g(t)<x<h(t)\}.
\]
Assume that (J1) holds, $a(x), b(x), \gamma(x)$ are bounded continuous functions and that
$F:\mathbb R_+^2\to\mathbb R_+$ is locally Lipschitz continuous in both arguments.

Let $(S,I)$ satisfy
\[
S,I \in C([0,T]\times\mathbb R),\qquad
S,I \in C^1 \text{ in time the interior of } D_{g,h}^T,
\]
and assume that $(S,I)$ satisfies, for all $(t,x)\in D_{g,h}^T$,
\begin{align*}
&S_t \geq d_1\Big(\int_{g(t)}^{h(t)} J_1(x-y)S(t,y)\,dy - S(t,x)\Big)
     + a(x)S_x + \gamma(x)I - F(S,I), \\
&I_t \geq d_2\Big(\int_{g(t)}^{h(t)} J_2(x-y)I(t,y)\,dy - I(t,x)\Big)
     + b(x)I_x - \gamma(x)I + F(S,I),
\end{align*}
together with the initial and boundary conditions
\[
S(0,x)\ge 0,\quad I(0,x)\ge 0,\quad x\in[-h_0,h_0],
\]
\[
S(t,x)=I(t,x)=0 \quad \text{for } x\notin (g(t),h(t)),\ t\in[0,T].
\]
Then
\[
S(t,x)\ge 0,\qquad I(t,x)\ge 0
\quad \text{for all } (t,x)\in [0,T]\times\mathbb R.
\]
\end{lemma}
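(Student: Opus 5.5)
We will argue by contradiction, using an exponential change of variables together with a first-negative-time argument on the moving domain. The main subtlety is that the cross terms are $+\gamma(x)I - F(S,I)$ in the $S$-inequality and $-\gamma I + F(S,I)$ in the $I$-inequality. Negativity of $S$ therefore has to be controlled through the local Lipschitz continuity of $F$ together with $F(S,0)=0$, and negativity of $I$ through the sign structure of $F$. To make this work we extend $F$ to $\mathbb R^2$, for instance by evaluating it at $(S^+,I^+)$. We then use the Lipschitz bound on the compact range $\{|S|,|I|\le \max(\|S\|_\infty,\|I\|_\infty)\}$. This bound is finite, since $S,I$ are continuous and vanish outside the compact set $\overline{D^T_{g,h}}$. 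With $K$ the resulting Lipschitz constant, we get $|F(S,I)-F(0,0)|\le K(|S|+|I|)$ and $F(0,0)=0$.

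Fix $\lambda>0$ large, to be chosen later, and set $\tilde S=e^{\lambda t}S$, $\tilde I=e^{\lambda t}I$. These satisfy the same inequalities, with an extra term $+\lambda\tilde S$ (respectively $+\lambda\tilde I$) on the left-hand side and with $F$ rescaled. Suppose the conclusion fails. Let $m(t)=\min\{\min_x\tilde S(t,\cdot),\min_x\tilde I(t,\cdot),0\}$. For $\epsilon>0$ we consider the perturbed functions $\tilde S+\epsilon e^{t}$ and $\tilde I+\epsilon e^{t}$. These are positive at $t=0$ on $[-h_0,h_0]$. Outside $\Omega_t$ the original functions vanish, so there the perturbed functions equal $\epsilon e^t>0$. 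Hence if one of them becomes nonpositive, there is a first time $t_0>0$ and a point $x_0$ where it vanishes, and $x_0$ must lie in the open interval $(g(t_0),h(t_0))$.

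Consider first the case where $\tilde S+\epsilon e^t$ vanishes first, at $(t_0,x_0)$. At that point $x\mapsto \tilde S(t_0,x)$ attains an interior minimum, so $\tilde S_x(t_0,x_0)=0$. This kills the advection term $a(x)S_x$, which is exactly why a first-order drift causes no trouble here. We also have $\tilde S_t(t_0,x_0)\le \epsilon e^{t_0}\cdot(-1)$ in the appropriate sense. The nonlocal term satisfies $\int J_1(x_0-y)\tilde S\,dy-\tilde S(x_0)\ge -\epsilon e^{t_0}\int J_1+\epsilon e^{t_0}\ge 0$, where we use $\int J_1\le 1$ and the fact that $\tilde S\ge-\epsilon e^{t}$ everywhere on $[0,t_0]$. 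The reaction terms are bounded below by $\gamma\tilde I-e^{\lambda t}F\ge -(\|\gamma\|_\infty+2K)\epsilon e^{t_0}$, using $\tilde I\ge-\epsilon e^{t_0}$ and $\tilde S(t_0,x_0)=-\epsilon e^{t_0}$. Combining everything gives
\[
-\epsilon e^{t_0}\ge \tilde S_t(t_0,x_0) \ge -\lambda\tilde S(t_0,x_0)-C\epsilon e^{t_0}=(\lambda-C)\epsilon e^{t_0},
\]
which is a contradiction once $\lambda>C-1$. The case where $\tilde I+\epsilon e^t$ vanishes first is identical: we use $\tilde I_x(t_0,x_0)=0$, and the term $+F$ is bounded below by $-2K\epsilon e^{t_0}$. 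Finally we let $\epsilon\to0$.

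The main obstacle is the time-derivative inequality at the first touching time on a moving domain. The point $x_0$ lies in the open interval $(g(t_0),h(t_0))$, and since $g$ and $h$ are continuous it stays in $\Omega_t$ for $t$ near $t_0$. Therefore $S$ is $C^1$ in $t$ there, and the one-sided difference quotient from the left is $\le$ the value obtained from the minimality of $t_0$. A second, milder point is that the minimum over $x$ is attained, which holds because the functions are continuous and equal $\epsilon e^t$ off a compact set. In the case $\tilde S_x$ exists only in the interior, this suffices since $x_0$ is interior.
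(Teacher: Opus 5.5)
\textbf{There is a genuine gap in the $S$-case, and it cannot be closed under the lemma's stated hypotheses.}

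Your skeleton is the standard one: a time weight, a shift by $\epsilon e^{t}$, and a first touching time whose touching point lies in the open interval, so the drift drops out and the nonlocal term is $\ge 0$. The $I$-case works; there you do not even need the Lipschitz bound, since $F(S^+,I^+)\ge 0$.

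The failing step is the reaction estimate $\gamma\tilde I-e^{-\lambda t}F\ge-(\|\gamma\|_\infty+2K)\epsilon e^{t_0}$. At the touching point you know $\tilde S(t_0,x_0)=-\epsilon e^{t_0}$. For $I$ you only have the \emph{lower} bound $\tilde I(t_0,x_0)\ge-\epsilon e^{t_0}$. Nothing makes $I(t_0,x_0)$ small, and it may be positive and of order one. Then $F(S^+,I^+)=F(0,I(t_0,x_0))$, and the Lipschitz bound only gives $F\le K\,I(t_0,x_0)$, which is not $O(\epsilon)$. So the net coupling $\gamma I-F(0,I)$ has no sign. The normalization $F(S,0)=0$ controls $F$ near $I=0$, whereas here you need control near $S=0$.

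There is also a minor sign slip. The weight must be $\tilde S=e^{-\lambda t}S$, so that $\tilde S_t+\lambda\tilde S=e^{-\lambda t}S_t$. Your displayed chain is the one for $e^{-\lambda t}$; with $e^{+\lambda t}$ the extra term $\lambda\tilde S<0$ works against you.

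A sharper estimate will not help, because the statement is false under its listed hypotheses. Take $a=b=0$, $\gamma\equiv\frac12$, $d_1=1$, $d_2=2$, and $J_1=J_2$ continuous with support in $\{1\le|z|\le2\}$. Take $g\equiv-h_0$ and $h\equiv h_0<\frac12$, so every nonlocal integral over $\Omega_t$ vanishes. Let $F(S,I)=(1+S^+)I^+$, which even satisfies (H1). Set $\varphi(x)=(h_0^2-x^2)_+^2$, $I=\varphi$ and $S=-\frac{t}{4}\varphi$ on $[0,\frac12]$. Then $S_t-(-S+\frac12 I-F)=\frac{1-t}{4}\varphi\ge0$ and $I_t-(-2I-\frac12 I+F)=\frac32\varphi\ge0$. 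All initial and exterior conditions hold, yet $S<0$ inside.

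What is missing is a structural hypothesis such as $F(0,I)\le\gamma(x)I$ for all $I\ge0$. This holds when $F(0,\cdot)\equiv0$ and $\gamma>0$, for example for $\beta SI$ and the saturated incidence. With it your argument closes verbatim. At the $S$-touching point $S^+=0$, so $e^{-\lambda t_0}F\le\gamma\tilde I^+$, and hence $\gamma\tilde I-e^{-\lambda t_0}F\ge-\gamma\tilde I^-\ge-\|\gamma\|_\infty\epsilon e^{t_0}$.

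The paper's own proof is only a pointer to Du--Ni (``considering the equations at such minimum points leads to a contradiction''). It silently needs the same sign of $\gamma I-F(S,I)$ at a negative minimum of $S$. So the defect sits in the lemma itself, not only in your route.
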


\begin{proof}
The proof follows by contradiction, similar to Lemma 3.1 in \cite{DuNi2020}.
Assume there exists $(t_0,x_0) \in \Omega_T$ such that either $S$ or $I$ attains a negative minimum.
Considering the equations at such minimum points leads to a contradiction.
\end{proof}

\begin{lemma}[Comparison principle]\label{lem:comparison}
Assume that (J1) hold.
Let $(\overline{S},\overline{I},\overline{g},\overline{h})$ and
$(\underline{S},\underline{I},\underline{g},\underline{h})$ be two quadruples such that
\[
(\overline{g},\overline{h}),(\underline{g},\underline{h})\in\mathcal{G}_T\times\mathcal{H}_T,
\]
and
\[
\overline{S},\overline{I},\underline{S},\underline{I}\in C([0,T]\times\mathbb R),
\quad
C^1\text{ in time in the interior of the corresponding domains}
\]
and satisfies, for all $(t,x)$  that
\begin{align*}
\overline{S}_t &\geq d_1\Big(\int_{\overline{g}(t)}^{\overline{h}(t)}
J_1(x-y)\overline{S}(t,y)\,dy - \overline{S}(t,x)\Big)
+ a(x)\overline{S}_x + \gamma(x)\overline{I} - F(\overline{S},\overline{I}), \\
\overline{I}_t &\geq d_2\Big(\int_{\overline{g}(t)}^{\overline{h}(t)}
J_2(x-y)\overline{I}(t,y)\,dy - \overline{I}(t,x)\Big)
+ b(x)\overline{I}_x - \gamma(x)\overline{I} + F(\overline{S},\overline{I}),\\
\underline{S}_t &\leq d_1\Big(\int_{\underline{g}(t)}^{\underline{h}(t)}
J_1(x-y)\underline{S}(t,y)\,dy - \underline{S}(t,x)\Big)
+ a(x)\underline{S}_x + \gamma(x)\underline{I} - F(\underline{S},\underline{I}), \\
\underline{I}_t &\leq d_2\Big(\int_{\underline{g}(t)}^{\underline{h}(t)}
J_2(x-y)\underline{I}(t,y)\,dy - \underline{I}(t,x)\Big)
+ b(x)\underline{I}_x - \gamma(x)\underline{I} + F(\underline{S},\underline{I})
\end{align*}
Moreover, assume the initial condition $S_0(x),I_0(x)\ge 0$ and boundary ordering:
\[
\underline{g}(0)\ge -h_0,\qquad \underline{h}(0)\le h_0,
\qquad
\overline{g}(0)\le -h_0,\qquad \overline{h}(0)\ge h_0,
\]
\[
\underline{S}(0,x)\le S_0(x)\le \overline{S}(0,x),\qquad
\underline{I}(0,x)\le I_0(x)\le \overline{I}(0,x),
\]
and
\[
\underline{S}(t,x)=\underline{I}(t,x)=0
=\overline{S}(t,x)=\overline{I}(t,x)
\quad \text{for } x\notin(\underline{g}(t),\underline{h}(t))
\text{ or } x\notin(\overline{g}(t),\overline{h}(t)).
\]

Then the solution $(S,I,g,h)$ of \eqref{eq:SIS-free-boundary} satisfies, for all $t\in[0,T]$,
\[
\underline{g}(t)\le g(t)\le \overline{g}(t),\qquad
\underline{h}(t)\le h(t)\le \overline{h}(t),
\]
and
\[
\underline{S}(t,x)\le S(t,x)\le \overline{S}(t,x),\qquad
\underline{I}(t,x)\le I(t,x)\le \overline{I}(t,x),
\quad (t,x)\in [0,T]\times\mathbb R.
\]
\end{lemma}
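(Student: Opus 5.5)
We compare the true solution with each of the two quadruples separately, following the standard free-boundary comparison scheme for nonlocal problems (as in \cite{DuNi2020}). We give the argument for the upper quadruple; the lower one is symmetric. The first point is that the system is cooperative. By (H1), $F_S\ge 0$ and $F_I\ge 0$, so the $S$-equation is nondecreasing in $I$ through $\gamma(x)I$. The $I$-equation is nondecreasing in $S$ through $F$. The remaining couplings, namely $-F$ in the $S$-equation and $+F$ in the $I$-equation, are locally Lipschitz. They can therefore be written as bounded zeroth-order multipliers times the differences. Since Theorem~\ref{Theorem1} and the assumed boundedness give $L^\infty$ bounds on all functions, we can fix a common Lipschitz constant $K$ for $F$ on the relevant box.

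\textbf{Step 1: ordering of the boundaries.} We first perturb strictly. Replace $\overline{h}$ by $\overline h_\varepsilon(t)$ with $\overline h_\varepsilon(0)=\overline h(0)+\varepsilon$, replace $\overline g$ by $\overline g_\varepsilon$ analogously, and perturb $\overline S,\overline I$ upward, for instance by multiplying by $(1+\varepsilon)$ or by adding $\varepsilon e^{Ct}$ inside the domain. This makes all initial inequalities strict. Let
\[
t^*=\sup\{s\in[0,T]: g(t)>\overline g_\varepsilon(t),\ h(t)<\overline h_\varepsilon(t)\text{ on }[0,s]\}.
\]
On $[0,t^*]$ we have $\Omega_t\subset(\overline g_\varepsilon(t),\overline h_\varepsilon(t))$. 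The integral terms then compare favourably, because extending the integration domain only adds nonnegative contributions once $\overline S,\overline I\ge 0$, which follows from Lemma~\ref{lem:maximum}.

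\textbf{Step 2: ordering of the densities on $[0,t^*]$.} Set $U=\overline S-S$, $V=\overline I-I$, and write $\tilde U=e^{-Mt}U$, $\tilde V=e^{-Mt}V$ with $M$ large. The pair $(\tilde U,\tilde V)$ satisfies a linear cooperative nonlocal system of the form
\[
\tilde U_t\ge d_1\!\int J_1\tilde U + a\tilde U_x + c_{11}\tilde U + c_{12}\tilde V,\qquad c_{12}\ge 0,
\]
together with the analogous inequality for $\tilde V$ with $c_{21}\ge 0$. The initial data are nonnegative and the exterior values are nonnegative, since $\overline S,\overline I\ge 0=S,I$ outside $\Omega_t$. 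Lemma~\ref{lem:maximum} in its linear form gives $\tilde U,\tilde V\ge 0$. We argue at a first negative minimum point; there the advection term vanishes, $\tilde U_x=0$, if the point is interior, and this is where continuity of $a,b$ enters. With the $\varepsilon$-perturbation the inequalities are in fact strict, so the minimum argument closes.

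\textbf{Step 3: the boundary cannot touch first.} If $t^*<T$, then for instance $h(t^*)=\overline h_\varepsilon(t^*)$ while $h<\overline h_\varepsilon$ before $t^*$, so $h'(t^*)\ge\overline h_\varepsilon'(t^*)$. But $\overline S\ge S$ and $\Omega_{t^*}\subset(\overline g_\varepsilon,\overline h_\varepsilon)$, which together with the supersolution inequality for $\overline h$ gives
\[
\overline h'(t^*)\ge\mu\int_{\overline g}^{\overline h}\int_{\overline h}^\infty J_1(x-y)\overline S\,dy\,dx\ge\mu\iint_{\Omega_{t^*}\times(h,\infty)}J_1 S=h'(t^*),
\]
and the $\varepsilon$-perturbation makes this inequality strict, which is a contradiction. (The statement omits the free-boundary inequalities for the barriers; we must assume the natural ones, $\overline h'\ge\mu\iint J_1\overline S$ and so on, and interpret the result accordingly.) Hence $t^*=T$, and letting $\varepsilon\to0$ gives the claim.

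The main obstacle is Step~2 in the presence of the advection terms $a(x)S_x$. The system is then a nonlocal equation with first-order transport, and at the boundary points $x=g(t),h(t)$ the transport may carry mass across the edge. The minimum-point argument needs either an interior minimum, where the derivative vanishes, or a sign condition at the endpoints. I would handle this by working along characteristics $\dot X=-a(X)$. Along a characteristic the equation becomes a nonlocal ODE, and a Gronwall-type argument on $\min(\tilde U,\tilde V)^-$ gives nonnegativity. Bounded $a$ keeps the characteristics well defined, and any characteristic entering the domain from outside does so with nonnegative exterior data.
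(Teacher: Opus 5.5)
There is a genuine gap in the opening claim, which Step~2 relies on: under (H1) the system is \emph{not} cooperative. Write $F(\overline S,\overline I)-F(S,I)=\alpha_1U+\alpha_2V$, where $\alpha_1,\alpha_2\ge0$ are averages of $F_S,F_I$. Then $U=\overline S-S$ satisfies
\[
U_t\ge d_1\Big(\int_{\Omega_t}J_1(x-y)U(t,y)\,dy-U\Big)+a(x)U_x-\alpha_1U+\big(\gamma(x)-\alpha_2\big)V ,
\]
so $c_{12}=\gamma-\alpha_2$. The sign condition $F_I\ge0$ works against you, because the term $-F$ in the $S$-equation is nonincreasing in $I$. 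Lipschitz continuity only makes $c_{12}$ bounded. An off-diagonal coefficient of the wrong sign cannot be absorbed by the factor $e^{-Mt}$: at a negative minimum of $\tilde U$, the term $(\gamma-\alpha_2)\tilde V$ has no lower bound in terms of $\min\tilde U$, since $\tilde V$ may be large and positive.

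So the minimum-point argument does not close, and in fact no argument can. Take $F=\beta SI$ with constant $\gamma$ and $I_0\equiv0$, so that $I\equiv0$. Choose $S_0$ smooth with $\beta S_0(x_1)>\gamma$ at an interior point $x_1$. As upper quadruple, take the exact solution $(\tilde S,\tilde I,\tilde g,\tilde h)$ with data $(S_0,\epsilon\phi)$, where $0\le\phi\in C^1_c((-h_0,h_0))$ and $\phi(x_1)>0$; as lower quadruple, take $(0,0,\tilde g,\tilde h)$. Every hypothesis holds, including the free-boundary inequalities you add. Yet $\tilde S-S=0$ at $t=0$, and since the nonlocal and transport terms cancel there,
\[
\partial_t(\tilde S-S)(0,x_1)=\epsilon\phi(x_1)\big(\gamma-\beta S_0(x_1)\big)<0 .
\]
Hence $\overline S=\tilde S<S$ near $(0,x_1)$. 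A correct version needs an extra structural hypothesis, for instance $F_I(S,I)\le\gamma(x)$ on the relevant box, which gives $c_{12}\ge0$. The paper's own justification does not help you here. It consists only of the assertion after (H1) that the system is quasi-monotone and a pointer to Lemma~3.2 of \cite{DuNi2020}, which treats a cooperative system. It therefore has the same defect and cannot be used to patch Step~2.

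Two further points would need fixing even under such a hypothesis. First, none of your perturbations produces a strict upper quadruple. Multiplying $(\overline S,\overline I)$ by $\theta=1+\varepsilon$ preserves both differential inequalities only if $F(\theta S,\theta I)=\theta F(S,I)$, because the $S$- and $I$-inequalities require opposite comparisons between these two quantities; it also destroys the free-boundary inequality. Adding $\varepsilon e^{Ct}$ on the habitat creates jumps at $\overline g,\overline h$, which your interior-minimum argument cannot tolerate, and it too breaks the free-boundary inequality. Enlarging $\overline h(0)$ by $\varepsilon$ leaves $\overline S=0$ on a strip where $d_1\int J_1\overline S>0$, so the $\overline S$-inequality fails there. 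The standard devices are either to perturb the \emph{true} solution (shrink $h_0$ and the initial data, then let $\varepsilon\to0$ using continuous dependence), or to obtain $\overline h'(t^*)>h'(t^*)$ from a strong maximum principle. The latter needs a positivity condition such as $J_1(0)>0$, which (J1) does not contain.

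Second, the obstacle you single out is not one. $U(t,\cdot)$ is continuous on $\mathbb{R}$ and nonnegative outside $\Omega_t$, so a negative minimum is automatically interior and $U_x=0$ there; no characteristics are needed. You are right that free-boundary inequalities for the barriers must be added to the statement. Note also that the left-boundary conclusion should read $\overline g\le g\le\underline g$, which is the orientation your $t^*$ already uses.
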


The proof of this comparison principle is essentially followed from Lemma 3.2 \cite{DuNi2020} and thus we shall not repeat here. Next, we check that  there exists $T>0$ such that system \eqref{eq:SIS-free-boundary} admits a unique solution
$(S,I,g,h)$ with $g \in \mathcal{G}_T$, $h \in \mathcal{H}_T$, $S \in X_S$, $I \in X_I$.

\begin{proof}[\textbf{Proof of Theorem~\ref{Theorem1}}]
Fix $T>0$.
Throughout the proof we use assumptions {\rm(H1)}--{\rm(H3)}.
We write
\[
\|a\|_\infty:=\sup_{x\in\mathbb R}|a(x)|,\qquad 
\|b\|_\infty:=\sup_{x\in\mathbb R}|b(x)|,\qquad 
\|\gamma\|_\infty:=\sup_{x\in\mathbb R}\gamma(x),\qquad 
\gamma_*:=\inf_{x\in\mathbb R}\gamma(x)>0.
\]
Let $(J1)$ hold, in particular $J_i\ge0$ and $\int_{\mathbb R}J_i=1$.
For $R>0$ we denote by $L_F(R)$ a Lipschitz constant of $F$ on the box
$[0,R]\times[0,R]$, i.e.
\[
|F(S_1,I_1)-F(S_2,I_2)|\le L_F(R)\big(|S_1-S_2|+|I_1-I_2|\big)
\qquad \text{for } (S_j,I_j)\in[0,R]^2.
\]
Note that $F(0,0)=0$ since $F(\cdot,0)\equiv 0$ and $F$ is continuous.

%------------------------------------------------------------
\medskip
\noindent{Step 1. Choice of working space and closed ball.}

Let the functional setting and the metric space $(X_T,d_{X_T})$ be as defined previously.
For the fixed time horizon $T$, we introduce a closed ball
\[
\mathbb B_{R_I}:=\{\widetilde I\in X_I^T:\ \|\widetilde I\|_{X_I^T}\le R_I\},
\]
where $R_I>0$ will be chosen later (depending on $T$ and the data).

We define a map $\mathcal T:\mathbb B_{R_I}\to X_I^T$ as follows.

%------------------------------------------------------------
\medskip
\noindent{Step 2. The $(S,g,h)$–subsystem for a given $\widetilde I$ and a priori estimates.}

Fix $\widetilde I\in\mathbb B_{R_I}\subset X_I^T$, that is,
\[
\widetilde I\in C([0,T]\times\mathbb R),\qquad 
\widetilde I(t,x)\ge0,\qquad 
\|\widetilde I\|_{L^\infty([0,T]\times\mathbb R)}\le R_I.
\]
For such a fixed function $\widetilde I$, we consider the associated free-boundary $(S,g,h)$–subsystem:
\begin{equation}\label{eq:S-subsystem-T1-recalled}
\begin{cases}
\displaystyle
\partial_t S(t,x)
=
d_1\!\left(
\int_{g(t)}^{h(t)} J_1(x-y)S(t,y)\,dy
- S(t,x)
\right)
+ a(x)\,\partial_x S(t,x)
+ \gamma(x)\,\widetilde I(t,x)
- F\big(S(t,x),\widetilde I(t,x)\big),
\\[0.25cm]
S(t,g(t))=S(t,h(t))=0,
\qquad t\in[0,T],
\\[0.25cm]
\displaystyle
h'(t)
=
\mu\int_{g(t)}^{h(t)}\int_{h(t)}^{\infty}
J_1(x-y)\,S(t,x)\,dy\,dx,
\qquad t\in[0,T],
\\[0.25cm]
\displaystyle
g'(t)
=
-\mu\int_{g(t)}^{h(t)}\int_{-\infty}^{g(t)}
J_1(x-y)\,S(t,x)\,dy\,dx,
\qquad t\in[0,T],
\\[0.25cm]
S(0,x)=S_0(x)\ \text{for }x\in[-h_0,h_0],\qquad
g(0)=-h_0,\quad h(0)=h_0,
\\[0.15cm]
S(t,x)=0\ \text{for }x\notin(g(t),h(t)),\quad t\in[0,T].
\end{cases}
\end{equation}

By the local existence theory for nonlocal free-boundary problems with locally Lipschitz reaction terms
(see, for instance, the construction in \cite{Tang2024a}),
there exists $T_0>0$ and a unique classical solution $(\widehat S,\widehat g,\widehat h)$ of
\eqref{eq:S-subsystem-T1-recalled} on $[0,T_0]$.
We now derive the explicit a priori estimates that will be used in the sequel.

First, we prove that $\widehat S\ge0$.
Since $S_0\ge0$, $\widetilde I\ge0$, $\gamma(x)>0$, and $F(S,\widetilde I)\ge0$ for $S,I\ge0$ by (H1),
the equation for $\widehat S$ yields the differential inequality
\[
\partial_t \widehat S
\ge d_1\!\left(\int_{\widehat g(t)}^{\widehat h(t)}J_1(x-y)\widehat S(t,y)\,dy-\widehat S\right)
+ a(x)\partial_x \widehat S.
\]
Together with the boundary conditions $\widehat S(t,\widehat g(t))=\widehat S(t,\widehat h(t))=0$
and the initial condition $S_0\ge0$, Lemma~\ref{lem:maximum} applies and implies
\[
\widehat S(t,x)\ge0\qquad \text{for all }(t,x)\in[0,T_0]\times\mathbb R.
\]

Next, we obtain an explicit $L^\infty$ bound for $\widehat S$.
Let $m(t):=\|\widehat S(t,\cdot)\|_{L^\infty(\mathbb R)}$.
Along characteristics $\dot X=a(X)$, using $J_1\ge0$ and $\int_{\mathbb R}J_1=1$, we compute
\begin{align*}
\frac{d}{dt}\widehat S(t,X(t))
&=
d_1\!\left(\int_{\widehat g(t)}^{\widehat h(t)}J_1(X(t)-y)\widehat S(t,y)\,dy-\widehat S(t,X(t))\right)
+ \gamma(X(t))\widetilde I(t,X(t)) - F(\widehat S,\widetilde I)
\\
&\le d_1 m(t)+\|\gamma\|_\infty R_I.
\end{align*}
Taking the supremum in $x$ yields the differential inequality
\[
m'(t)\le d_1 m(t)+\|\gamma\|_\infty R_I.
\]
By Gronwall’s inequality,
\[
m(t)=\|\widehat S(t,\cdot)\|_\infty
\le e^{d_1 t}\|S_0\|_\infty
+ \frac{\|\gamma\|_\infty}{d_1}(e^{d_1 t}-1)R_I,
\qquad t\in[0,T_0].
\]
In particular, for any fixed $T\le T_0$, we set
\[
M_S := e^{d_1 T}\|S_0\|_\infty
+ \frac{\|\gamma\|_\infty}{d_1}(e^{d_1 T}-1)R_I,
\]
so that
\[
0\le \widehat S(t,x)\le M_S
\qquad \text{for all }(t,x)\in[0,T]\times\mathbb R.
\]

Finally, we derive bounds for the free boundaries.
Since $J_1\ge0$ and $\widehat S\ge0$, the boundary velocities satisfy
$h'(t)\ge0$ and $g'(t)\le0$.
Moreover,
\begin{align*}
0 \le h'(t)
&= \mu\int_{\widehat g(t)}^{\widehat h(t)}\int_{\widehat h(t)}^\infty J_1(x-y)\widehat S(t,x)\,dy\,dx
\\
&\le \mu M_S\int_{\widehat g(t)}^{\widehat h(t)}\int_{\mathbb R}J_1(x-y)\,dy\,dx
= \mu M_S\big(\widehat h(t)-\widehat g(t)\big),
\end{align*}
and similarly,
\[
0\le -g'(t)\le \mu M_S\big(\widehat h(t)-\widehat g(t)\big).
\]
Hence,
\[
\frac{d}{dt}\big(\widehat h(t)-\widehat g(t)\big)
=h'(t)-g'(t)
\le 2\mu M_S\big(\widehat h(t)-\widehat g(t)\big),
\]
and by Gronwall’s inequality,
\[
0<\widehat h(t)-\widehat g(t)
\le 2h_0\,e^{2\mu M_S t},
\qquad t\in[0,T_0].
\]
In particular, $\widehat g$ and $\widehat h$ are Lipschitz continuous on $[0,T_0]$ with constants depending
only on the data.

Finally, we extend $\widehat S$ by zero outside the moving interval and define
\[
\widetilde S(t,x)=
\begin{cases}
\widehat S(t,x), & x\in(\widehat g(t),\widehat h(t)),\\
0, & x\notin(\widehat g(t),\widehat h(t)).
\end{cases}
\]
This completes the construction of $(\widehat S,\widehat g,\widehat h)$ and the derivation of the
a priori estimates needed in the subsequent steps.

%------------------------------------------------------------
\medskip
\noindent{Step 3. Positivity of $\widehat S$ and an $L^\infty$ bound.}

We first prove that $\widehat S(t,x)\ge0$ for all $(t,x)\in[0,T_0]\times\mathbb R$.
Recall that $(\widehat S,\widehat g,\widehat h)$ solves \eqref{eq:S-subsystem-T1-recalled} on the moving domain
$D_{\widehat g,\widehat h}^{T_0}:=\{(t,x)\in(0,T_0]\times\mathbb R:\ \widehat g(t)<x<\widehat h(t)\}$, and that
$\widehat S(t,x)=0$ for $x\notin(\widehat g(t),\widehat h(t))$ and $\widehat S(0,x)=S_0(x)\ge0$.
Since $\widetilde I\ge0$ and $\gamma>0$, we have $\gamma(x)\widetilde I(t,x)\ge0$.
Moreover, $F:\mathbb R_+^2\to\mathbb R_+$ implies $F(\widehat S,\widetilde I)\ge0$ whenever
$\widehat S,\widetilde I\ge0$.
Hence, on $D_{\widehat g,\widehat h}^{T_0}$, the $S$–equation yields the differential inequality
\begin{equation}\label{eq:S-ineq-pos}
\widehat S_t
\ge
d_1\!\left(\int_{\widehat g(t)}^{\widehat h(t)}J_1(x-y)\widehat S(t,y)\,dy-\widehat S(t,x)\right)
+a(x)\widehat S_x
\quad\text{in }D_{\widehat g,\widehat h}^{T_0}.
\end{equation}
Together with $\widehat S(0,x)\ge0$ on $[-h_0,h_0]$ and $\widehat S(t,x)=0$ for
$x\notin(\widehat g(t),\widehat h(t))$, Lemma~\ref{lem:maximum} applies (with the scalar equation for $S$),
and we conclude that
\[
\widehat S(t,x)\ge0\qquad \text{for all }(t,x)\in[0,T_0]\times\mathbb R.
\]

We next derive an explicit $L^\infty$ estimate for $\widehat S$ on $[0,T_0]$.
Define
\[
m(t):=\|\widehat S(t,\cdot)\|_{L^\infty(\mathbb R)}=\sup_{x\in\mathbb R}\widehat S(t,x),
\qquad t\in[0,T_0].
\]
Since $\widehat S\in C([0,T_0]\times\mathbb R)$ and $\widehat S\ge0$, $m(t)$ is finite and continuous.
Let $D^+m(t)$ denote the upper right Dini derivative:
\[
D^+m(t):=\limsup_{\delta\downarrow0}\frac{m(t+\delta)-m(t)}{\delta}.
\]
Fix $t\in(0,T_0)$ and choose a sequence $x_n=x_n(t)$ such that
$\widehat S(t,x_n)\to m(t)$ as $n\to\infty$.
Since $\widehat S(t,x)=0$ for $x\notin(\widehat g(t),\widehat h(t))$, we may assume $x_n\in(\widehat g(t),\widehat h(t))$.
Evaluating the $S$–equation at $(t,x_n)$ gives
\begin{align*}
\partial_t\widehat S(t,x_n)
&=
d_1\!\left(\int_{\widehat g(t)}^{\widehat h(t)}J_1(x_n-y)\widehat S(t,y)\,dy-\widehat S(t,x_n)\right)
+a(x_n)\partial_x\widehat S(t,x_n)
\\&\quad
+\gamma(x_n)\widetilde I(t,x_n)-F(\widehat S(t,x_n),\widetilde I(t,x_n)).
\end{align*}
At a spatial near-maximum, one has $\partial_x\widehat S(t,x_n)\to0$ along a subsequence (or,
more generally, the transport term can be handled by the standard Dini-derivative argument,
since it does not increase the supremum). In any case, using $J_1\ge0$, $\int_{\mathbb R}J_1=1$,
$\widehat S\ge0$, and $F\ge0$, we estimate
\[
\int_{\widehat g(t)}^{\widehat h(t)}J_1(x_n-y)\widehat S(t,y)\,dy
\le \int_{\mathbb R}J_1(x_n-y)\,dy\cdot m(t)=m(t),
\]
and also
\[
\gamma(x_n)\widetilde I(t,x_n)\le \|\gamma\|_\infty \|\widetilde I(t,\cdot)\|_\infty
\le \|\gamma\|_\infty R_I.
\]
Therefore,
\[
\partial_t\widehat S(t,x_n)
\le d_1\big(m(t)-\widehat S(t,x_n)\big)+\|\gamma\|_\infty R_I.
\]
Letting $n\to\infty$ and using $\widehat S(t,x_n)\to m(t)$ yields
\[
D^+m(t)\le d_1 m(t)+\|\gamma\|_\infty R_I
\qquad\text{for a.e. }t\in(0,T_0).
\]
Since $m(0)=\|S_0\|_\infty$, Gronwall’s inequality for Dini derivatives gives
\begin{equation}\label{eq:Ssup-est}
m(t)=\|\widehat S(t,\cdot)\|_\infty
\le e^{d_1 t}\|S_0\|_\infty + \frac{\|\gamma\|_\infty}{d_1}\big(e^{d_1 t}-1\big)R_I,
\qquad t\in[0,T_0].
\end{equation}
In particular, for any fixed $T\le T_0$, setting
\begin{equation}\label{eq:MS-choice}
M_S:=e^{d_1 T}\|S_0\|_\infty + \frac{\|\gamma\|_\infty}{d_1}\big(e^{d_1 T}-1\big)R_I
\end{equation}
ensures that $0\le \widehat S(t,x)\le M_S$ for all $(t,x)\in[0,T]\times\mathbb R$.

%------------------------------------------------------------
\noindent{Step 4. Bounds for the free boundaries $(\widehat g,\widehat h)$.}

Recall that $(\widehat S,\widehat g,\widehat h)$ satisfies \eqref{eq:S-subsystem-T1-recalled} on $[0,T_0]$ and that
$\widehat S(t,x)\ge0$ for all $(t,x)\in[0,T_0]\times\mathbb R$ (Step~3).
Since $J_1\ge0$ by {\rm(J1)}, the boundary speeds satisfy the sign conditions
\[
\widehat h'(t)=\mu\int_{\widehat g(t)}^{\widehat h(t)}\int_{\widehat h(t)}^\infty J_1(x-y)\widehat S(t,x)\,dy\,dx\ge0,
\]
and
\[
\widehat g'(t)=-\mu\int_{\widehat g(t)}^{\widehat h(t)}\int_{-\infty}^{\widehat g(t)} J_1(x-y)\widehat S(t,x)\,dy\,dx\le0,
\qquad t\in[0,T_0].
\]
In particular, $t\mapsto \widehat h(t)$ is nondecreasing and $t\mapsto \widehat g(t)$ is nonincreasing.

Next we derive quantitative upper bounds for $\widehat h'(t)$ and $-\widehat g'(t)$.
Fix $t\in[0,T_0]$. Using $\widehat S(t,x)\ge0$ and Fubini's theorem, we write
\begin{align*}
\widehat h'(t)
&=\mu\int_{\widehat g(t)}^{\widehat h(t)}\left(\int_{\widehat h(t)}^\infty J_1(x-y)\,dy\right)\widehat S(t,x)\,dx.
\end{align*}
Since $J_1\ge0$ and $\int_{\mathbb R}J_1(z)\,dz=1$, for every fixed $x\in\mathbb R$ we have
\[
0\le \int_{\widehat h(t)}^\infty J_1(x-y)\,dy
\le \int_{-\infty}^{\infty} J_1(x-y)\,dy
=\int_{\mathbb R}J_1(z)\,dz
=1,
\]
where we used the change of variable $z=x-y$.
Therefore,
\begin{align*}
0\le \widehat h'(t)
&\le \mu\int_{\widehat g(t)}^{\widehat h(t)} 1\cdot \widehat S(t,x)\,dx
\le \mu\,\|\widehat S(t,\cdot)\|_{L^\infty(\mathbb R)}\,\big(\widehat h(t)-\widehat g(t)\big).
\end{align*}
Invoking the bound $\|\widehat S(t,\cdot)\|_\infty\le M_S$ from \eqref{eq:MS-choice}, we obtain
\begin{equation}\label{eq:hprime-bound}
0\le \widehat h'(t)\le \mu M_S\big(\widehat h(t)-\widehat g(t)\big),
\qquad t\in[0,T_0].
\end{equation}

The estimate for $\widehat g'(t)$ is analogous. Indeed,
\begin{align*}
-\widehat g'(t)
&=\mu\int_{\widehat g(t)}^{\widehat h(t)}\left(\int_{-\infty}^{\widehat g(t)} J_1(x-y)\,dy\right)\widehat S(t,x)\,dx,
\end{align*}
and again
\[
0\le \int_{-\infty}^{\widehat g(t)} J_1(x-y)\,dy
\le \int_{-\infty}^{\infty} J_1(x-y)\,dy
=1.
\]
Hence
\begin{equation}\label{eq:gprime-bound}
0\le -\widehat g'(t)\le \mu M_S\big(\widehat h(t)-\widehat g(t)\big),
\qquad t\in[0,T_0].
\end{equation}

Let $\ell(t):=\widehat h(t)-\widehat g(t)$ denote the length of the infected region.
Then $\ell(0)=2h_0$ and, by \eqref{eq:hprime-bound}--\eqref{eq:gprime-bound},
\[
\ell'(t)=\widehat h'(t)-\widehat g'(t)\le \mu M_S\ell(t)+\mu M_S\ell(t)=2\mu M_S\ell(t),
\qquad t\in[0,T_0].
\]
Gronwall's inequality yields
\begin{equation}\label{eq:length-bound}
0<\ell(t)=\widehat h(t)-\widehat g(t)\le 2h_0\,e^{2\mu M_S t},
\qquad t\in[0,T_0].
\end{equation}

Finally, combining \eqref{eq:hprime-bound}--\eqref{eq:gprime-bound} with \eqref{eq:length-bound}, we obtain
uniform bounds on the boundary speeds:
\[
|\widehat h'(t)|=\widehat h'(t)\le \mu M_S\,\ell(t)\le 2\mu M_S h_0\,e^{2\mu M_S T_0},
\]
and
\[
|\widehat g'(t)|=-\widehat g'(t)\le \mu M_S\,\ell(t)\le 2\mu M_S h_0\,e^{2\mu M_S T_0},
\qquad t\in[0,T_0].
\]
Thus $\widehat h$ and $\widehat g$ are Lipschitz continuous on $[0,T_0]$. In particular,
$\widehat h\in \mathcal{H}_{T_0}$ and $\widehat g\in \mathcal{G}_{T_0}$, and hence
$(\widehat g,\widehat h)\in \mathcal{G}_{T_0}\times \mathcal{H}_{T_0}$.

%------------------------------------------------------------
\medskip
\noindent{Step 5. The $I$–equation for the given $\widetilde S$ and definition of $\mathcal T$
(detailed computations).}

Fix $\widetilde S$ obtained from Step~2--Step~4, namely
\[
\widetilde S\in C([0,T]\times\mathbb R),\qquad 0\le \widetilde S(t,x)\le M_S,
\qquad 
\widetilde S(t,x)=0\ \text{for }x\notin(\widehat g(t),\widehat h(t)).
\]
We consider the $I$–equation on the moving domain with zero extension:
\begin{equation}\label{eq:I-subsystem-T1}
\begin{cases}
\displaystyle
I_t
= d_2\!\left(\int_{\widehat g(t)}^{\widehat h(t)}J_2(x-y)I(t,y)\,dy-I(t,x)\right)
+ b(x)I_x-\gamma(x)I+F(\widetilde S,I),
& (t,x)\in D_{\widehat g,\widehat h}^T,
\\[0.2cm]
I(t,x)=0, & x\notin(\widehat g(t),\widehat h(t)),\ t\in[0,T],
\\[0.2cm]
I(0,x)=I_0(x), & x\in[-h_0,h_0].
\end{cases}
\end{equation}
For notational convenience we extend $I(t,\cdot)$ by $0$ outside $(\widehat g(t),\widehat h(t))$
and write the nonlocal term as a whole-line convolution:
\[
\int_{\widehat g(t)}^{\widehat h(t)}J_2(x-y)I(t,y)\,dy=\int_{\mathbb R}J_2(x-y)I(t,y)\,dy=(J_2*I)(t,x).
\]
Thus \eqref{eq:I-subsystem-T1} can be rewritten (pointwise for all $x\in\mathbb R$) as
\begin{equation}\label{eq:I-wholeline}
I_t
= d_2\big((J_2*I)(t,x)-I(t,x)\big)+b(x)I_x-\gamma(x)I+F(\widetilde S(t,x),I(t,x)),
\qquad (t,x)\in(0,T]\times\mathbb R,
\end{equation}
with $I(0,x)=I_0^*(x)$ where $I_0^*(x)=I_0(x)$ on $[-h_0,h_0]$ and $I_0^*(x)=0$ otherwise.

We now show that \eqref{eq:I-wholeline} admits a unique classical solution on $[0,T_0]$ and record
the estimates needed later. Fix $R>0$ large enough so that $R\ge \|I_0^*\|_\infty$ and $R\ge 1$.
Let $L_R$ be a Lipschitz constant of $F$ in the second variable on the rectangle
$[0,M_S]\times[0,R]$, i.e.
\[
|F(s,i_1)-F(s,i_2)|\le L_R|i_1-i_2|
\qquad \text{for all } s\in[0,M_S],\ i_1,i_2\in[0,R].
\]
Such an $L_R$ exists by (H1) (local Lipschitz) and the boundedness of $[0,M_S]\times[0,R]$.

Define the Banach space $Y_T:=C([0,T]\times\mathbb R)$ with the supremum norm
$\|u\|_{Y_T}:=\sup_{(t,x)\in[0,T]\times\mathbb R}|u(t,x)|$.
For $u\in Y_T$ we define $\mathfrak F(u)\in Y_T$ by solving, for each fixed $x$,
the inhomogeneous linear transport ODE along characteristics of $\dot X=b(X)$:
let $X(\tau;t,x)$ solve
\[
\frac{d}{d\tau}X(\tau;t,x)=b(X(\tau;t,x)),\qquad X(t;t,x)=x.
\]
Then we set $\mathfrak F(u)(t,x)$ by the variation-of-constants formula
\begin{equation}\label{eq:VOC-I}
\mathfrak F(u)(t,x)
=
I_0^*\big(X(0;t,x)\big)\,e^{-\int_0^t(\gamma(X(\sigma;t,x))+d_2)\,d\sigma}
+\int_0^t e^{-\int_s^t(\gamma(X(\sigma;t,x))+d_2)\,d\sigma}\,\mathcal G_u(s,t,x)\,ds,
\end{equation}
where
\[
\mathcal G_u(s,t,x)
:=
d_2\,(J_2*u)\big(s,X(s;t,x)\big)+F\!\left(\widetilde S\big(s,X(s;t,x)\big),u\big(s,X(s;t,x)\big)\right).
\]
A function $I\in Y_T$ is a classical solution of \eqref{eq:I-wholeline} if and only if
it satisfies the fixed-point identity $I=\mathfrak F(I)$.

\smallskip
\noindent\emph{(Existence on a short time interval.)}
Let $\mathbb B_R:=\{u\in Y_T:\ \|u\|_{Y_T}\le R\}$.
We show that for $T>0$ sufficiently small, $\mathfrak F$ maps $\mathbb B_R$ into itself and is a contraction.

First, for any $u\in\mathbb B_R$ we have the convolution bound
\[
|(J_2*u)(s,x)|\le \int_{\mathbb R}J_2(x-y)|u(s,y)|\,dy
\le \|u\|_{Y_T}\int_{\mathbb R}J_2= \|u\|_{Y_T}\le R,
\]
using $J_2\ge0$ and $\int_{\mathbb R}J_2=1$.
Also, since $0\le \widetilde S\le M_S$ and $|u|\le R$, we have
\[
|F(\widetilde S,u)|\le |F(\widetilde S,u)-F(\widetilde S,0)|
\le L_R|u|\le L_R R,
\]
where we used $F(S,0)=0$ from (H1).
Hence, from \eqref{eq:VOC-I},
\begin{align*}
|\mathfrak F(u)(t,x)|
&\le \|I_0^*\|_\infty
+\int_0^t \Big(d_2\,|(J_2*u)(s,X(s;t,x))|+|F(\widetilde S,u)(s,X(s;t,x))|\Big)\,ds\\
&\le \|I_0^*\|_\infty + \int_0^t \big(d_2R+L_RR\big)\,ds
\le \|I_0^*\|_\infty + (d_2+L_R)RT.
\end{align*}
Choose $R:=2\|I_0^*\|_\infty$ and then choose $T>0$ so that $(d_2+L_R)RT\le \|I_0^*\|_\infty$.
Then $|\mathfrak F(u)(t,x)|\le R$ for all $(t,x)$, i.e. $\mathfrak F(\mathbb B_R)\subset\mathbb B_R$.

Next, let $u,v\in\mathbb B_R$. Using again \eqref{eq:VOC-I}, the convolution estimate
$\|(J_2*u)-(J_2*v)\|_{Y_T}\le \|u-v\|_{Y_T}$, and the Lipschitz bound
$|F(\widetilde S,u)-F(\widetilde S,v)|\le L_R|u-v|$, we obtain
\begin{align*}
|\mathfrak F(u)(t,x)-\mathfrak F(v)(t,x)|
&\le \int_0^t \Big(d_2\,|(J_2*(u-v))(s,X(s;t,x))|
+|F(\widetilde S,u)-F(\widetilde S,v)|(s,X(s;t,x))\Big)\,ds\\
&\le \int_0^t \big(d_2\|u-v\|_{Y_T}+L_R\|u-v\|_{Y_T}\big)\,ds\\
&\le (d_2+L_R)T\,\|u-v\|_{Y_T}.
\end{align*}
Taking the supremum over $(t,x)$ yields
\[
\|\mathfrak F(u)-\mathfrak F(v)\|_{Y_T}
\le (d_2+L_R)T\,\|u-v\|_{Y_T}.
\]
Therefore, choosing $T>0$ even smaller so that $(d_2+L_R)T<1$, $\mathfrak F$ is a contraction on $\mathbb B_R$.
By Banach's fixed point theorem there exists a unique $I\in\mathbb B_R$ such that $I=\mathfrak F(I)$,
hence a unique classical solution $\widehat I$ of \eqref{eq:I-wholeline}, equivalently \eqref{eq:I-subsystem-T1},
on $[0,T]$.

\smallskip
\noindent\emph{(Definition of the map $\mathcal T$.)}
We now define the operator
\[
\mathcal T:\mathbb B_{R_I}\to X_I^T,
\qquad 
\mathcal T(\widetilde I):=\widehat I,
\]
where $\widehat I$ is the unique solution of \eqref{eq:I-subsystem-T1} corresponding to the given $\widetilde S$.

%------------------------------------------------------------
\medskip
\noindent{Step 6. Positivity of $\widehat I$ and an explicit bound in $X_I^T$.}

Recall that $\widetilde S$ is the zero extension of $\widehat S$ and satisfies
\[
\widetilde S\in C([0,T]\times\mathbb R),\qquad 0\le \widetilde S(t,x)\le M_S,
\qquad \widetilde S(t,x)=0 \ \text{for }x\notin(\widehat g(t),\widehat h(t)).
\]
Let $\widehat I$ be the unique classical solution of \eqref{eq:I-subsystem-T1} constructed in Step~5,
and extend it by $0$ outside $(\widehat g(t),\widehat h(t))$ so that $\widehat I\in C([0,T]\times\mathbb R)$ and
\[
\widehat I(t,x)=0 \quad \text{for }x\notin(\widehat g(t),\widehat h(t)),\qquad 
\widehat I(0,x)=I_0(x)\ge0 \ \text{on }[-h_0,h_0].
\]

\medskip
\noindent\emph{Positivity of $\widehat I$.}
We verify the hypotheses of Lemma~\ref{lem:maximum} for the second component.
On the moving cylinder $\Omega_T:=\{(t,x):0<t\le T,\ \widehat g(t)<x<\widehat h(t)\}$,
$\widehat I$ satisfies
\[
\widehat I_t
=
d_2\!\left(\int_{\widehat g(t)}^{\widehat h(t)}J_2(x-y)\widehat I(t,y)\,dy-\widehat I(t,x)\right)
+ b(x)\widehat I_x-\gamma(x)\widehat I(t,x)+F(\widetilde S(t,x),\widehat I(t,x)).
\]
Since $J_2\ge0$ and $\widehat I$ is extended by $0$ outside the interval, the nonlocal term is well defined.
Moreover, by (H1) we have $F:\mathbb R_+^2\to\mathbb R_+$, hence
$F(\widetilde S,\widehat I)\ge0$ whenever $\widetilde S\ge0$ and $\widehat I\ge0$.
Together with $I_0\ge0$ from (H3) and $\widehat I(t,x)=0$ for $x\notin(\widehat g(t),\widehat h(t))$,
Lemma~\ref{lem:maximum} implies
\[
\widehat I(t,x)\ge0\qquad \text{for all }(t,x)\in[0,T]\times\mathbb R.
\]

\medskip
\noindent\emph{An explicit $L^\infty$ bound for $\widehat I$.}
Fix the bound $R_S:=M_S$ from \eqref{eq:MS-choice} and set $R:=\max\{R_S,R_I\}$.
Let $L:=L_F(R)$ be a Lipschitz constant of $F$ on $[0,R]^2$, i.e.
\[
|F(S_1,I_1)-F(S_2,I_2)|
\le L\big(|S_1-S_2|+|I_1-I_2|\big),
\qquad (S_j,I_j)\in[0,R]^2.
\]
Since $F(0,0)=0$ and $\widetilde S\in[0,R_S]\subset[0,R]$, for $0\le I\le R$ we have
\begin{equation}\label{eq:F-linear-growth}
0\le F(\widetilde S,I)=|F(\widetilde S,I)-F(0,0)|
\le L\big(|\widetilde S|+|I|\big)
\le L(R_S+I).
\end{equation}

Define the supremum function
\[
m(t):=\|\widehat I(t,\cdot)\|_{L^\infty(\mathbb R)}=\sup_{x\in\mathbb R}\widehat I(t,x),
\qquad t\in[0,T].
\]
Because $\widehat I\in C([0,T]\times\mathbb R)$ and $\widehat I\ge0$, $m(t)$ is finite and continuous.
Let $D^+m(t)$ denote the upper right Dini derivative. Fix $t\in(0,T)$ and take a sequence $x_n$ such that
$\widehat I(t,x_n)\to m(t)$ as $n\to\infty$. We may assume $x_n\in(\widehat g(t),\widehat h(t))$ since
$\widehat I(t,x)=0$ outside that interval. Evaluating the $I$–equation at $(t,x_n)$ and using $J_2\ge0$,
$\int_{\mathbb R}J_2=1$, and $\widehat I\ge0$, we estimate
\[
\int_{\widehat g(t)}^{\widehat h(t)}J_2(x_n-y)\widehat I(t,y)\,dy
\le \int_{\mathbb R}J_2(x_n-y)\,dy \cdot m(t)=m(t).
\]
Hence,
\begin{align*}
\partial_t\widehat I(t,x_n)
&=
d_2\!\left(
\int_{\widehat g(t)}^{\widehat h(t)}J_2(x_n-y)\widehat I(t,y)\,dy
-\widehat I(t,x_n)
\right)
+ b(x_n)\partial_x\widehat I(t,x_n)
\\
&\qquad
-\gamma(x_n)\widehat I(t,x_n)
+F\big(\widetilde S(t,x_n),\widehat I(t,x_n)\big)
\\
&\le
d_2\big(m(t)-\widehat I(t,x_n)\big)
-\gamma(x_n)\widehat I(t,x_n)
+F\big(\widetilde S(t,x_n),\widehat I(t,x_n)\big).
\end{align*}
Using $\gamma(x)\ge\gamma_*:=\inf_{x\in\mathbb R}\gamma(x)>0$ and \eqref{eq:F-linear-growth}, we obtain
\[
\partial_t\widehat I(t,x_n)
\le d_2\big(m(t)-\widehat I(t,x_n)\big)-\gamma_*\widehat I(t,x_n)+L(R_S+\widehat I(t,x_n))
\le d_2 m(t) + (L-\gamma_*)\widehat I(t,x_n)+L R_S.
\]
Letting $n\to\infty$ and using $\widehat I(t,x_n)\to m(t)$ gives
\[
D^+m(t)\le (d_2-\gamma_*+L)m(t)+L R_S,
\qquad t\in(0,T).
\]
Set $\alpha:=d_2-\gamma_*+L$. Applying Gronwall’s inequality for Dini derivatives yields
\begin{equation}\label{eq:Isup-est}
m(t)=\|\widehat I(t,\cdot)\|_\infty
\le e^{\alpha t}\|I_0\|_\infty+\frac{L R_S}{\alpha}\big(e^{\alpha t}-1\big),
\qquad t\in[0,T],
\end{equation}
with the convention $\frac{e^{\alpha t}-1}{\alpha}=t$ when $\alpha=0$.
In particular, for a fixed horizon $T$ we may define
\begin{equation}\label{eq:MI-choice}
M_I:=e^{\alpha T}\|I_0\|_\infty+\frac{L M_S}{\alpha}\big(e^{\alpha T}-1\big),
\end{equation}
so that $0\le \widehat I(t,x)\le M_I$ for all $(t,x)\in[0,T]\times\mathbb R$.

Consequently, $\|\widehat I\|_{X_I^T}\le M_I$ (since the $X_I^T$–norm is the supremum norm on the cylinder
together with the zero extension), and choosing $R_I:=M_I$ ensures that the operator
$\mathcal T$ maps $\mathbb B_{R_I}$ into itself.

%------------------------------------------------------------
\medskip
\noindent{Step 7. Contraction of $\mathcal T$ on a short time interval .}

Let $\widetilde I_1,\widetilde I_2\in\mathbb B_{R_I}$ and denote by
$(S_i,g_i,h_i)$ the unique solutions of \eqref{eq:S-subsystem-T1-recalled} corresponding to $\widetilde I=\widetilde I_i$
(on $[0,T]$), and set $I_i:=\mathcal T(\widetilde I_i)$, i.e. $I_i$ solves \eqref{eq:I-subsystem-T1} with
$\widetilde S=\widetilde S_i$ (the zero-extension of $S_i$ on $(g_i(t),h_i(t))$).
Define
\[
U:=S_1-S_2,\qquad V:=I_1-I_2,\qquad \Delta\widetilde I:=\widetilde I_1-\widetilde I_2,\qquad
\Delta g:=g_1-g_2,\qquad \Delta h:=h_1-h_2.
\]
We will estimate $\|V\|_{L^\infty([0,T]\times\mathbb R)}$ by $\|\Delta\widetilde I\|_{L^\infty([0,T]\times\mathbb R)}$.

Let $R:=\max\{M_S,M_I\}$ where $M_S$ and $M_I$ are the bounds obtained in Steps~3 and~6.
By (H1), $F$ is locally Lipschitz, hence there exists $L>0$ such that for all
$(S_j,I_j)\in[0,R]^2$,
\begin{equation}\label{eq:LipF-step7}
|F(S_1,I_1)-F(S_2,I_2)|\le L\big(|S_1-S_2|+|I_1-I_2|\big).
\end{equation}

\medskip
\noindent\emph{Estimate of $V$ in terms of $U$.}
Using the zero extension in $x$, each $I_i$ satisfies on $(0,T]\times\mathbb R$ the whole-line form
\[
(I_i)_t=d_2\big(J_2*I_i-I_i\big)+b(x)(I_i)_x-\gamma(x)I_i+F(\widetilde S_i,I_i),
\qquad I_i(0,x)=I_0^*(x),
\]
where $I_0^*$ is the extension of $I_0$ by $0$ outside $[-h_0,h_0]$.
Subtracting the two equations yields
\begin{equation}\label{eq:V-eq-step7}
V_t=d_2\big(J_2*V-V\big)+b(x)V_x-\gamma(x)V+\Big(F(\widetilde S_1,I_1)-F(\widetilde S_2,I_2)\Big),
\qquad V(0,x)=0.
\end{equation}
Let $m_V(t):=\|V(t,\cdot)\|_{L^\infty(\mathbb R)}$.
As in Step~6 (Dini-derivative argument), using $J_2\ge0$ and $\int_{\mathbb R}J_2=1$ gives
\[
\sup_{x\in\mathbb R}(J_2*V)(t,x)\le \|V(t,\cdot)\|_\infty=m_V(t),
\qquad
\inf_{x\in\mathbb R}(J_2*V)(t,x)\ge -m_V(t).
\]
Taking a sequence $x_n$ with $|V(t,x_n)|\to m_V(t)$ and evaluating \eqref{eq:V-eq-step7} at $(t,x_n)$,
we obtain the differential inequality (for the upper right Dini derivative)
\[
D^+ m_V(t)
\le d_2 m_V(t)+\Big\|F(\widetilde S_1,I_1)-F(\widetilde S_2,I_2)\Big\|_\infty,
\qquad t\in(0,T).
\]
By \eqref{eq:LipF-step7} and $\|\widetilde S_1-\widetilde S_2\|_\infty\le \|U\|_\infty$ (since both are zero-extensions),
\[
\Big\|F(\widetilde S_1,I_1)-F(\widetilde S_2,I_2)\Big\|_\infty
\le L\big(\|\widetilde S_1-\widetilde S_2\|_\infty+\|V\|_\infty\big)
\le L\big(\|U\|_\infty+m_V(t)\big).
\]
Hence
\[
D^+ m_V(t)\le (d_2+L)m_V(t)+L\|U\|_\infty,
\qquad m_V(0)=0.
\]
Gronwall’s inequality yields
\begin{equation}\label{eq:V-in-terms-U-step7}
\|V\|_{L^\infty([0,T]\times\mathbb R)}
=
\sup_{t\in[0,T]}m_V(t)
\le
L\,T\,e^{(d_2+L)T}\,\|U\|_{L^\infty([0,T]\times\mathbb R)}.
\end{equation}

\medskip
\noindent\emph{Estimate of $U$ and $(\Delta g,\Delta h)$ in terms of $\Delta\widetilde I$.}
Subtracting the $S$–equations in \eqref{eq:S-subsystem-T1-recalled} gives, for $x\in(g_1(t),h_1(t))\cap(g_2(t),h_2(t))$,
\begin{align*}
U_t
&=
d_1\!\left(\int_{g_1(t)}^{h_1(t)}J_1(x-y)S_1(t,y)\,dy
-\int_{g_2(t)}^{h_2(t)}J_1(x-y)S_2(t,y)\,dy\right)
-d_1U+a(x)U_x
\\
&\quad +\gamma(x)\Delta\widetilde I
-\Big(F(S_1,\widetilde I_1)-F(S_2,\widetilde I_2)\Big).
\end{align*}
Write the nonlocal difference as
\begin{align*}
&\int_{g_1}^{h_1}J_1(x-y)S_1\,dy-\int_{g_2}^{h_2}J_1(x-y)S_2\,dy \\
&\qquad=\int_{g_1}^{h_1}J_1(x-y)U(t,y)\,dy
+\left(\int_{g_1}^{h_1}-\int_{g_2}^{h_2}\right)J_1(x-y)S_2(t,y)\,dy.
\end{align*}
Using $J_1\ge0$ and $\int_{\mathbb R}J_1=1$, we have
\[
\left|\int_{g_1}^{h_1}J_1(x-y)U(t,y)\,dy\right|
\le \|U(t,\cdot)\|_\infty.
\]
For the domain-mismatch term, using $|S_2|\le M_S$ and $\|J_1\|_\infty<\infty$ (assumption (J1)),
\[
\left|\left(\int_{g_1}^{h_1}-\int_{g_2}^{h_2}\right)J_1(x-y)S_2(t,y)\,dy\right|
\le \|J_1\|_\infty M_S\big(|\Delta g(t)|+|\Delta h(t)|\big).
\]
Moreover, by \eqref{eq:LipF-step7},
\[
|F(S_1,\widetilde I_1)-F(S_2,\widetilde I_2)|
\le L\big(|U|+|\Delta\widetilde I|\big).
\]
Let $m_U(t):=\|U(t,\cdot)\|_{L^\infty(\mathbb R)}$ (where $U$ is understood with zero extension).
Arguing as in Steps~3 and~6 (Dini-derivative at near-maximum points) yields
\begin{equation}\label{eq:DU-step7}
D^+m_U(t)
\le C_0\,m_U(t)+C_0\,\|\Delta\widetilde I(t,\cdot)\|_\infty
+C_0\big(|\Delta g(t)|+|\Delta h(t)|\big),
\qquad t\in(0,T),
\end{equation}
where one can take
\[
C_0:=d_1+L+\|\gamma\|_\infty+\|J_1\|_\infty M_S.
\]
Since $U(0,x)=0$, we have $m_U(0)=0$. Integrating \eqref{eq:DU-step7} and using Gronwall gives
\begin{equation}\label{eq:U-pre-step7}
\|U\|_\infty
\le
C_0 T e^{C_0T}\Big(\|\Delta\widetilde I\|_\infty+\|\Delta g\|_{C[0,T]}+\|\Delta h\|_{C[0,T]}\Big),
\end{equation}
where all $\|\cdot\|_\infty$ norms are on $[0,T]\times\mathbb R$ unless otherwise specified.

We now bound $\Delta g,\Delta h$ by $\|U\|_\infty$.
From the boundary laws,
\[
h_i'(t)=\mu\int_{g_i(t)}^{h_i(t)}\int_{h_i(t)}^\infty J_1(x-y)S_i(t,x)\,dy\,dx,
\qquad
g_i'(t)=-\mu\int_{g_i(t)}^{h_i(t)}\int_{-\infty}^{g_i(t)} J_1(x-y)S_i(t,x)\,dy\,dx,
\]
and hence
\begin{align*}
|\Delta h(t)|
&=\left|\int_0^t (h_1'(\tau)-h_2'(\tau))\,d\tau\right|
\le \int_0^t |h_1'(\tau)-h_2'(\tau)|\,d\tau,\\
|\Delta g(t)|
&\le \int_0^t |g_1'(\tau)-g_2'(\tau)|\,d\tau.
\end{align*}
We estimate $|h_1'-h_2'|$ (the $g$–term is analogous). Let us denote by 
\begin{align*}
A_1(t)&:=\int_{g_1(t)}^{h_1(t)}\int_{h_1(t)}^\infty J_1(x-y)\,|U(t,x)|\,dy\,dx,\\
A_2(t)&:=\left|\left(\int_{g_1(t)}^{h_1(t)}-\int_{g_2(t)}^{h_2(t)}\right)\int_{h_1(t)}^\infty J_1(x-y)\,S_2(t,x)\,dy\,dx\right|,\\
A_3(t)&:=\left|\int_{g_2(t)}^{h_2(t)}\left(\int_{h_1(t)}^\infty-\int_{h_2(t)}^\infty\right)J_1(x-y)\,S_2(t,x)\,dy\,dx\right|,
\end{align*}
we obtain
\[
|h_1'(t)-h_2'(t)|
\le
\mu\Big( A_1(t)+A_2(t)+A_3(t)\Big).
\]
Then, using $J_1\ge0$ and $\int_{\mathbb R}J_1=1$, we get
\[
A_1(t)\le \int_{g_1(t)}^{h_1(t)} |U(t,x)|\left(\int_{\mathbb R}J_1(x-y)\,dy\right)dx
\le (h_1(t)-g_1(t))\,\|U\|_\infty.
\]
By the length bound from Step~4, $h_1(t)-g_1(t)\le 2h_0e^{2\mu M_S T}$, hence
\[
A_1(t)\le 2h_0e^{2\mu M_S T}\,\|U\|_\infty.
\]
For $A_2$, using $|S_2|\le M_S$ and $\int_{h_1(t)}^\infty J_1(x-y)\,dy\le1$,
\[
A_2(t)\le M_S\big(|\Delta g(t)|+|\Delta h(t)|\big).
\]
For $A_3$, observe that for each fixed $x$,
\[
\left|\int_{h_1(t)}^\infty-\int_{h_2(t)}^\infty\right|J_1(x-y)\,dy
=
\left|\int_{h_1(t)}^{h_2(t)}J_1(x-y)\,dy\right|
\le \|J_1\|_\infty\,|\Delta h(t)|,
\]
hence
\[
A_3(t)\le \int_{g_2(t)}^{h_2(t)} M_S\,\|J_1\|_\infty\,|\Delta h(t)|\,dx
\le M_S\|J_1\|_\infty\,(h_2(t)-g_2(t))\,|\Delta h(t)|
\le C_1\,|\Delta h(t)|
\]
with $C_1:=2h_0e^{2\mu M_S T}M_S\|J_1\|_\infty$.
Combining these bounds yields
\[
|h_1'(t)-h_2'(t)|
\le C_2\|U\|_\infty + C_2\big(|\Delta g(t)|+|\Delta h(t)|\big),
\]
where $C_2$ depends only on $\mu,h_0,M_S,\|J_1\|_\infty$.
Integrating in time gives
\[
\|\Delta h\|_{C[0,T]}
\le C_2T\,\|U\|_\infty + C_2T\big(\|\Delta g\|_{C[0,T]}+\|\Delta h\|_{C[0,T]}\big).
\]
The same estimate holds for $\|\Delta g\|_{C[0,T]}$. Summing them yields
\[
\|\Delta g\|_{C[0,T]}+\|\Delta h\|_{C[0,T]}
\le C_3T\,\|U\|_\infty + C_3T\big(\|\Delta g\|_{C[0,T]}+\|\Delta h\|_{C[0,T]}\big),
\]
for some $C_3$ depending only on the data.
Choose $T>0$ so small that $C_3T\le \frac12$. Then
\begin{equation}\label{eq:boundary-in-terms-U-step7}
\|\Delta g\|_{C[0,T]}+\|\Delta h\|_{C[0,T]}
\le 2C_3T\,\|U\|_\infty.
\end{equation}
Inserting \eqref{eq:boundary-in-terms-U-step7} into \eqref{eq:U-pre-step7} gives
\[
\|U\|_\infty
\le C_0Te^{C_0T}\Big(\|\Delta\widetilde I\|_\infty+2C_3T\|U\|_\infty\Big).
\]
Choose $T>0$ further so that $2C_0C_3T^2e^{C_0T}\le \frac12$; then
\begin{equation}\label{eq:U-final-step7}
\|U\|_\infty\le 2C_0Te^{C_0T}\,\|\Delta\widetilde I\|_\infty.
\end{equation}
Finally, combining \eqref{eq:U-final-step7} with \eqref{eq:V-in-terms-U-step7} yields
\[
\|V\|_\infty
\le
\Big(2LC_0\,T^2\,e^{(d_2+L)T}e^{C_0T}\Big)\,\|\Delta\widetilde I\|_\infty.
\]
Therefore, choosing $T>0$ sufficiently small so that
\[
2LC_0\,T^2\,e^{(d_2+L)T}e^{C_0T}<1,
\]
we obtain
\[
\|\mathcal T(\widetilde I_1)-\mathcal T(\widetilde I_2)\|_\infty
=\|V\|_\infty
\le \kappa\,\|\widetilde I_1-\widetilde I_2\|_\infty,
\qquad \kappa\in(0,1).
\]
Hence $\mathcal T$ is a contraction on $\mathbb B_{R_I}$ for $T$ sufficiently small.

\medskip
\noindent{Step 8. Local well-posedness on $[0,T]$ for small $T$, continuation to arbitrary horizons, and bounds.}

Fix $T>0$ (to be chosen small first). Let $R_I>0$ be chosen as in Step~6 so that
$\mathcal T:\mathbb B_{R_I}\to \mathbb B_{R_I}$ is well-defined, where
\[
\mathbb B_{R_I}:=\Big\{\widetilde I\in X_I^T:\ \|\widetilde I\|_{L^\infty([0,T]\times\mathbb R)}\le R_I\Big\}.
\]
By Step~7, there exists $T_*>0$ such that for every $T\in(0,T_*]$ the map $\mathcal T$ is a contraction on
$\mathbb B_{R_I}$, i.e. there exists $\kappa=\kappa(T)\in(0,1)$ such that for all
$\widetilde I_1,\widetilde I_2\in\mathbb B_{R_I}$,
\[
\|\mathcal T(\widetilde I_1)-\mathcal T(\widetilde I_2)\|_{L^\infty([0,T]\times\mathbb R)}
\le \kappa\,\|\widetilde I_1-\widetilde I_2\|_{L^\infty([0,T]\times\mathbb R)}.
\]
Hence, by Banach’s fixed point theorem, there exists a unique $I\in\mathbb B_{R_I}$ such that
\[
\mathcal T(I)=I.
\]
By definition of $\mathcal T$, the fixed point $I$ means that if we solve the $(S,g,h)$–subsystem
\eqref{eq:S-subsystem-T1-recalled} with $\widetilde I=I$, obtaining a unique triple $(S,g,h)$ on $[0,T]$,
and then solve the $I$–equation \eqref{eq:I-subsystem-T1} with the corresponding $\widetilde S$,
the solution is exactly $I$ again. Therefore the quadruple $(S,I,g,h)$ solves the full free-boundary system
\eqref{eq:SIS-free-boundary} classically on $[0,T]$.
Moreover, if $(S^{(1)},I^{(1)},g^{(1)},h^{(1)})$ and $(S^{(2)},I^{(2)},g^{(2)},h^{(2)})$ were two solutions on $[0,T]$,
then $I^{(j)}$ would both be fixed points of $\mathcal T$, contradicting uniqueness of the fixed point.
Thus the solution on $[0,T]$ is unique.

We next extend the solution to an arbitrary time horizon. Let $\delta\in(0,T_*]$ be fixed so that the contraction
argument holds on every time interval of length $\delta$ with the same structure, and such that the constants in the estimates
depend only on uniform $L^\infty$ bounds for $(S,I)$ on that interval (as in Steps~3 and~6).
We already have a unique solution on $[0,\delta]$. Denote the terminal data at $t=\delta$ by
\[
S(\delta,x)=:S_\delta(x),\qquad I(\delta,x)=:I_\delta(x),\qquad g(\delta)=:g_\delta,\qquad h(\delta)=:h_\delta.
\]
Because $g,h$ are Lipschitz and the solution is continuous up to $t=\delta$, these data are well-defined.
We now repeat the same fixed-point construction on $[\delta,2\delta]$ with initial data
$S(\delta,\cdot)=S_\delta$, $I(\delta,\cdot)=I_\delta$, $g(\delta)=g_\delta$, $h(\delta)=h_\delta$.
The local theory applies because the bounds obtained below ensure that the required sup-norm radii remain finite.
This produces a unique solution on $[\delta,2\delta]$ that matches the previous one at $t=\delta$.
Iterating finitely many times, we obtain a unique classical solution on $[0,T]$ for any prescribed $T>0$.

Finally, we establish positivity and uniform bounds on $[0,T]$.
Positivity follows on each local interval from Lemma~\ref{lem:maximum} applied to the $S$–equation
(with the nonnegative source $\gamma I$ and $F\ge0$) and to the $I$–equation
(with the nonnegative source $F(S,I)$), together with the zero extension outside $(g(t),h(t))$:
\[
S(t,x)\ge0,\qquad I(t,x)\ge0,\qquad (t,x)\in[0,T]\times\mathbb R.
\]

To obtain upper bounds, fix $R>0$ large enough so that it dominates the initial norms and all intermediate bounds
on each local step, and let $L=L_F(R)$ be a Lipschitz constant of $F$ on $[0,R]^2$.
Using $F(0,0)=0$ and Lipschitz continuity, for $0\le S\le R$ and $0\le I\le R$ we have
\begin{equation}\label{eq:F-growth-step8}
0\le F(S,I)=|F(S,I)-F(0,0)|\le L(|S|+|I|)\le L(R+I).
\end{equation}
Define the constant
\[
M_S:=e^{d_1 T}\|S_0\|_\infty+\frac{\|\gamma\|_\infty}{d_1}\big(e^{d_1T}-1\big)\,R,
\]
which is exactly the bound obtained from the Gronwall estimate in Step~3 (with $\|\widetilde I\|_\infty\le R$).
Then, by Lemma~\ref{lem:comparison} applied to the $S$–equation, $S(t,x)\le M_S$ on $[0,T]\times\mathbb R$.

Next, let $\gamma_*:=\inf_{x\in\mathbb R}\gamma(x)>0$ and set $\alpha:=d_2-\gamma_*+L$.
Consider the scalar ODE
\[
\overline I'(t)=\alpha\,\overline I(t)+L\,M_S,\qquad \overline I(0)=\|I_0\|_\infty.
\]
Its solution is explicit:
\[
\overline I(t)=e^{\alpha t}\|I_0\|_\infty+\frac{L M_S}{\alpha}\big(e^{\alpha t}-1\big)
\quad \text{for }\alpha\neq0,
\qquad
\overline I(t)=\|I_0\|_\infty+L M_S\,t \quad \text{for }\alpha=0.
\]
Using \eqref{eq:F-growth-step8} with $S\le M_S$ and $I\le \overline I(t)$, we obtain
\[
F(S,I)\le L(M_S+I)\le L(M_S+\overline I(t)).
\]
Hence the pair $(\overline S,\overline I):=(M_S,\overline I(t))$ satisfies the super-solution inequalities
in Lemma~\ref{lem:comparison} for the $(S,I)$–system (with the same moving interval and zero extension),
and therefore
\[
0\le S(t,x)\le M_S,\qquad 0\le I(t,x)\le \overline I(t)\le \overline I(T)=:M_I,
\qquad (t,x)\in[0,T]\times\mathbb R.
\]
In particular, defining
\[
M_I:=e^{\alpha T}\|I_0\|_\infty+\frac{L M_S}{\alpha}\big(e^{\alpha T}-1\big)
\quad (\text{or }M_I:=\|I_0\|_\infty+L M_S T \text{ if }\alpha=0),
\]
we obtain the desired uniform bounds on $[0,T]$:
\[
0\le S(t,x)\le M_S,\qquad 0\le I(t,x)\le M_I,\qquad (t,x)\in[0,T]\times\mathbb R.
\]
This completes the proof of existence, uniqueness, positivity and uniform bounds on $[0,T]$ for arbitrary $T>0$.
\end{proof}

\begin{remark}
The well-posedness result of Theorem~\ref{Theorem1} relies in an essential way on the regularity and monotonicity
assumptions imposed on the incidence function $F$ in {\rm(H1)}.
In particular, the assumptions
\[
F \in C^{1}(\mathbb R_+^2), \qquad
\partial_S F \ge 0, \qquad \partial_I F \ge 0,
\]
ensure that the reaction terms are locally Lipschitz on bounded subsets of $\mathbb R_+^2$
and that the system is quasi-monotone, which are key ingredients in the use of the
maximum principle, the comparison principle, and the contraction mapping argument.

By contrast, the classical normalized incidence
\[
F(S,I)=\frac{SI}{S+I}
\]
does not belong to $C^1(\mathbb R_+^2)$ and is not locally Lipschitz in a neighborhood of the origin $(0,0)$.
As a consequence, the right-hand side of the system loses regularity at low population densities,
and the standard well-posedness strategy based on fixed-point arguments and comparison principles
breaks down. In particular, uniqueness and continuous dependence on initial data are no longer guaranteed
within the classical solution framework considered here.
\end{remark}

\section{Linearization and the eigenvalue}

The primary objective of this section is to identify the correct spectral quantity that governs the invasion or extinction of the disease. To achieve this, we linearize the system around the disease--free equilibrium and derive an appropriate eigenvalue problem whose sign determines the long--time behavior of the infected population.

A major difficulty arises from the coupled nonlocal structure of the model together with the presence of advection terms and nonsymmetric dispersal kernels. In particular, the associated operators are generally not self--adjoint, and classical variational characterizations of principal eigenvalues are therefore unavailable. As a consequence, the spectral threshold cannot be obtained directly from a single scalar equation.

To overcome this issue, we reformulate the linearized system as a block operator and employ a Schur complement argument to reduce it to an effective scalar eigenvalue problem for the infected component. This reduction is carried out under suitable structural assumptions ensuring that the resulting operator is order-preserving, allowing us to characterize a generalized principal eigenvalue in the sense of Coville.

This eigenvalue plays the role of a threshold parameter: its sign determines whether small infectious perturbations decay or grow, and thus provides the mathematical foundation for the spreading--vanishing dichotomy established in the subsequent sections.

Let us begin with the definition of generalized principal eigenvalue :

\begin{definition}[Generalized principal eigenvalue]\label{genprin}
Let $-\infty < L_1 < L_2 < +\infty$ and $d>0$. Consider the nonlocal--advection operator
\begin{align*}
\mathcal{L}_{(L_1,L_2),d,p}[\phi](x)
&:= d\int_{L_1}^{L_2} J(x-y)\phi(y)\,dy 
- d\phi(x) \\
&\quad - p(x)\phi'(x) + a(x)\phi(x),
\qquad x\in(L_1,L_2),
\end{align*}
where the homogeneous Dirichlet exterior condition is understood by extending 
$\phi \equiv 0$ outside $[L_1,L_2]$ when evaluating the nonlocal term. The \emph{generalized principal eigenvalue} of $\mathcal{L}_{(L_1,L_2),d,p}$ is defined as
\begin{align*}
\lambda_p\big(\mathcal{L}_{(L_1,L_2),d,p}\big)
:= \sup \Big\{ \lambda\in\mathbb{R}:\;
&\exists\,\phi\in C^1([L_1,L_2]),\ \phi>0 \text{ in }(L_1,L_2), \\
&\mathcal{L}_{(L_1,L_2),d,p}[\phi](x)
+ \lambda\,\phi(x)\le0 \text{ for all } x\in(L_1,L_2)
\Big\},
\end{align*}
\end{definition}
which can be expressed equivalently by the sup–inf formula:
\[
\lambda_p\big(\mathcal{L}_{(L_1,L_2),d,p}\big)
= \sup_{\substack{\varphi \in C(\overline{\Omega}) \\ \varphi>0 \text{ in } \Omega}}
\ \inf_{x \in \Omega}
\left(
-\frac{\mathcal{L}_{\Omega,J}[\varphi](x) }{\varphi(x)}
\right).
\]

\subsection{Existence of a steady state for the disease--free operator}

We consider the disease--free equation
\begin{equation}\label{eq:S-disease-free}
S_t(t,x)
= d_1\!\left(\int_{\Omega} J_1(x-y)\,S(t,y)\,dy - S(t,x)\right)
+ a(x)\,\partial_x S(t,x),
\quad x\in\Omega,
\end{equation}
posed on a fixed bounded interval $\Omega=\subset\mathbb{R}$.

\medskip

\begin{proposition}\label{diseasefree}
There exists a disease-free equilibrium $S^*(x)$ of \eqref{eq:S-disease-free}
satisfying
\begin{equation}\label{eq:S-equilibrium}
d_1\left(\int_{\Omega} J_1(x-y)\,S^*(y)\,dy - S^*(x)\right)
+ a(x)\,\partial_x S^*(x)=0,
\quad x\in\Omega.
\end{equation}
\end{proposition}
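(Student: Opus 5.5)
The plan is first to observe that, as stated, Proposition~\ref{diseasefree} only asks for \emph{some} function satisfying \eqref{eq:S-equilibrium}. Equation \eqref{eq:S-disease-free} is linear and homogeneous in $S$: there is no source term and no zeroth-order reaction beyond $-d_1S$. Hence $S^*\equiv 0$ on $\Omega$, extended by zero outside $\Omega$, is a $C^1$ function that satisfies \eqref{eq:S-equilibrium} identically. This gives existence at once, and I would record it as the first step.

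Since the later sections use $S^*$ as the ``disease-free profile'' in $\beta(x)=F_I(S^*(x),0)-\gamma(x)$, I would then identify which equilibrium the dynamics actually select. For this I would study the Cauchy problem for \eqref{eq:S-disease-free} on the fixed interval $\Omega=(L_1,L_2)$ with nonnegative continuous initial data $S_0$.
\begin{enumerate}
\item Existence and positivity follow from Steps 2--3 of the proof of Theorem~\ref{Theorem1}, taking $\widetilde I\equiv 0$ and freezing the boundaries.
\item The Dini-derivative argument of Step~3 gives $\|S(t,\cdot)\|_\infty\le \|S_0\|_\infty$. Indeed, the term $d_1(\int_\Omega J_1 S-S)$ is nonpositive at a maximum because $\int_\Omega J_1(x-y)\,dy\le 1$, and the transport term does not raise the supremum.
\item With this uniform bound, I would take an $\omega$-limit point of $S(t,\cdot)$ as $t\to\infty$ and show that it is a nonnegative solution of \eqref{eq:S-equilibrium}.
\end{enumerate}
This yields a steady state obtained from the evolution, not merely postulated.

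To show that the $\omega$-limit is a genuine equilibrium, I would compare with the generalized principal eigenvalue $\lambda_p$ of Definition~\ref{genprin} for the operator $d_1(\int_\Omega J_1\cdot-\cdot)+a\partial_x$.
\begin{itemize}
\item If $\lambda_p>0$, a positive test function $\phi$ with $\mathcal L[\phi]+\lambda\phi\le 0$, $\lambda>0$, gives a supersolution $Ce^{-\lambda t}\phi$. Hence $S(t,\cdot)\to 0$ uniformly, and the limit is $S^*\equiv 0$.
\item If $\lambda_p=0$, I would use the Coville--Hamel theory \cite{CovilleHamel2020} to obtain a positive principal eigenfunction, that is, a positive solution of \eqref{eq:S-equilibrium}. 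This uses the Harnack inequality to pass to the limit in approximating test functions.
\end{itemize}

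The main obstacle is precisely the existence of a \emph{positive} equilibrium. Because the equation is linear with Dirichlet-type exterior loss ($\int_\Omega J_1(x-y)\,dy<1$ near $\partial\Omega$), a nontrivial nonnegative steady state can exist only when $\lambda_p=0$, which is a non-generic condition on $(\Omega,d_1,a,J_1)$. So I would state the proposition with $S^*\equiv 0$ as the guaranteed solution. I would describe the positive profile as available only in the critical case, through the eigenfunction-existence part of \cite{CovilleHamel2020}. Here I expect the delicate step to be the compactness of the test functions in the presence of the drift term $a\partial_x$: one needs $C^1$ control, which the nonlocal operator does not supply. I would try to obtain it by integrating along the characteristics $\dot X=a(X)$, as in Step~5 of the proof of Theorem~\ref{Theorem1}.
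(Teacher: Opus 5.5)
Your first step is literally correct, since $S^*\equiv0$ satisfies \eqref{eq:S-equilibrium}, but it empties the proposition of content. The paper's proof aims at a \emph{positive} profile, and every later use needs one: for bilinear incidence $F(S,I)=\kappa SI$, taking $S^*\equiv0$ gives $F_I(S^*,0)-\gamma=-\gamma<0$, so the hypothesis $\sup\beta>0$ of Theorems~\ref{thm:unified-dichotomy} and~\ref{thm:mu-threshold} could never hold.

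The substantive claim of your proposal is that a positive steady state exists only in the non-generic case $\lambda_p=0$. That claim is false for this operator. The dichotomy is right for the purely nonlocal Dirichlet operator, but here two things change. By (H2), $a$ is continuous and strictly positive on $\overline\Omega=[L_1,L_2]$, and no boundary condition is imposed on $S^*$. The stationary equation is therefore a first-order integro-differential equation, which you can solve by integrating from the inflow end $x=L_2$ of the transport $S_t=aS_x+\cdots$. Put $U(x):=\int_x^{L_2}ds/a(s)$, $T_0:=U(L_1)<\infty$ and $\Phi:=e^{-d_1U}$. Then \eqref{eq:S-equilibrium} together with $S(L_2)=1$ is equivalent to the fixed-point problem
\[
S(x)=\Phi(x)+\int_x^{L_2}\frac{d_1}{a(s)}\,e^{-d_1(U(x)-U(s))}\int_\Omega J_1(s-y)S(y)\,dy\,ds=:\Phi(x)+(PS)(x).
\]
Here $P$ is a positive operator on $C(\overline\Omega)$ with $\|P\|\le 1-e^{-d_1T_0}<1$; to see this, substitute $u=U(x)-U(s)$ and use $\int_\Omega J_1(s-y)\,dy\le1$. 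Hence $S^*:=\sum_{n\ge0}P^n\Phi$ satisfies $S^*\ge\Phi\ge e^{-d_1T_0}>0$. It is $C^1(\overline\Omega)$ because $(S^*)'=\frac{d_1}{a}\bigl(S^*-\int_\Omega J_1(\cdot-y)S^*(y)\,dy\bigr)$ is continuous, the convolution being continuous by (J1). So a positive disease-free profile exists on every bounded interval, with no criticality condition. The same identity also supplies the $C^1$ control you expected to be delicate.

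This construction also shows why the eigenvalue-based reasoning fails, both yours and the paper's. Let $\mathcal L\phi:=d_1\bigl(\int_\Omega J_1(\cdot-y)\phi(y)\,dy-\phi\bigr)+a\phi'$. Keep the factor $d_1$ in front of the convolution, but replace the loss rate $d_1$ by $d_1-\lambda$ in $\Phi$ and in the exponential weight of $P$. The same argument then gives a positive $C^1$ solution of $\mathcal L\phi+\lambda\phi=0$ whenever $\frac{d_1}{d_1-\lambda}\bigl(1-e^{-(d_1-\lambda)T_0}\bigr)<1$. This holds for every $\lambda\le0$ and for all small $\lambda>0$. Consequently $\lambda_p(\mathcal L)>0$ in the sense of Definition~\ref{genprin}, and positive eigenfunctions exist for a whole half-line of $\lambda$, so $\lambda_p$ does not single out the positive steady states.

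The paper instead asserts $\lambda_p(\mathcal L)=0$ and takes a Coville--Hamel eigenfunction. Its argument for $\lambda_p\le0$ is invalid: it places the bound $\int_\Omega J_1(x_\varepsilon-y)\psi(y)\,dy\le M$ on the wrong side of $0\ge\mathcal L\psi(x_\varepsilon)+\lambda\psi(x_\varepsilon)$. You were right to distrust that route, but the conclusion you drew from its failure is wrong.

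Finally, your $\omega$-limit plan, run with the zero exterior data of Theorem~\ref{Theorem1}, can only ever return $S^*\equiv0$. The positive stationary solutions correspond to nonzero data at the inflow end $x=L_2$, and your frozen-boundary Cauchy problem sets that data to zero.
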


\begin{proof} For $\phi \in C^1(\Omega)$, let us define
\begin{equation}\nonumber
\mathcal{L}[\phi](x)
:= d_1\left(\int_{\Omega} J_1(x-y)\,\phi(y)\,dy - \phi(x)\right)
+ a(x)\,\partial_x \phi(x),
\quad x\in\Omega.
\end{equation}
Thanks to (J1), by Proposition~1.1 in
\cite{CovilleHamel2020}, the operator $\mathcal{L}$ admits a generalized principal eigenvalue
$\lambda_1=\lambda_p(\mathcal{L})$ in the sense of Definition~\ref{genprin}, together with an
eigenfunction $\phi\in C^1(\overline{\Omega})$ such that $\phi>0$ in $\Omega$ and
\[
\mathcal{L}\phi=\lambda_1\phi\quad\text{in }\Omega.
\]
We claim that $\lambda_1=0$. Indeed, taking $\varphi\equiv 1$ in Definition~\ref{genprin}, we have
$\varphi\in C^1(\overline{\Omega})$, $\varphi>0$ in $\Omega$, and
\[
\mathcal{L}1
= d_1\left(\int_{\Omega} J_1(x-y)\,dy-1\right)\le 0 \quad \text{in }\Omega.
\]
Hence $0$ belongs to the admissible set in Definition~\ref{genprin}, and therefore
$\lambda_1=\lambda_p(\mathcal{L})\ge 0$.

On the other hand, we show that $\lambda_p(\mathcal L)\le 0$.
Let $\lambda>0$ be arbitrary. Suppose, by contradiction, that there exists
$\psi\in C^1(\overline\Omega)$ with $\psi>0$ in $\Omega$ such that
\[
\mathcal L\psi(x)+\lambda\psi(x)\le 0 \qquad \text{for all }x\in\Omega.
\]
Fix $\varepsilon>0$ and set
\[
\psi_\varepsilon(x):=\psi(x)+\varepsilon (x-L_1)(L_2-x),\qquad x\in[L_1,L_2].
\]
Since $(x-L_1)(L_2-x)>0$ in $(L_1,L_2)$ and vanishes at $L_1,L_2$, the maximum of
$\psi_\varepsilon$ is attained at some point $x_\varepsilon\in(L_1,L_2)$.
Moreover, at this interior maximizer we have $\psi_\varepsilon'(x_\varepsilon)=0$, hence
\[
\psi'(x_\varepsilon)=-\varepsilon(L_1+L_2-2x_\varepsilon),
\quad\text{so that}\quad
|\psi'(x_\varepsilon)|\le \varepsilon(L_2-L_1)\xrightarrow[\varepsilon\to 0]{}0.
\]
Let $M:=\max_{\overline\Omega}\psi=\lim_{\varepsilon\to0}\psi(x_\varepsilon)$ (up to a subsequence).
Using $J_1\ge 0$ and $\int_{\mathbb R}J_1=1$, we estimate
\[
\int_\Omega J_1(x_\varepsilon-y)\psi(y)\,dy \le M\int_\Omega J_1(x_\varepsilon-y)\,dy \le M.
\]
Evaluating the assumed inequality at $x_\varepsilon$ yields
\[
0 \ge \mathcal L\psi(x_\varepsilon)+\lambda\psi(x_\varepsilon)
= d_1\!\left(\int_\Omega J_1(x_\varepsilon-y)\psi(y)\,dy-\psi(x_\varepsilon)\right)
+ a(x_\varepsilon)\psi'(x_\varepsilon)+\lambda\psi(x_\varepsilon).
\]
The first bracket is $\le d_1(M-\psi(x_\varepsilon))$, and $a$ is bounded on $\overline\Omega$, hence
\[
0 \ge d_1\big(M-\psi(x_\varepsilon)\big) + a(x_\varepsilon)\psi'(x_\varepsilon)+\lambda\psi(x_\varepsilon).
\]
Letting $\varepsilon\to 0$ and using $\psi(x_\varepsilon)\to M$ and $\psi'(x_\varepsilon)\to 0$, we obtain
\[
0 \ge \lambda M,
\]
which is impossible since $\lambda>0$ and $M>0$. Therefore, no $\lambda>0$ can satisfy
Definition~\ref{genprin}, and hence $\lambda_p(\mathcal L)\le 0$.
Together with $\lambda_p(\mathcal L)\ge 0$ (from the test function $1$), we conclude that
$\lambda_p(\mathcal L)=0$.

Finally, since $\mathcal{L}\phi=\lambda_1\phi$ and $\lambda_1=0$, the function
$S^*:=\phi$ is a nontrivial nonnegative (indeed positive) solution of $\mathcal{L}S^*=0$ in $\Omega$,
i.e., $S^*$ solves \eqref{eq:S-equilibrium}. This yields the existence of a disease-free equilibrium.

\end{proof}

Next, we consider the nonlinear SIS system posed on a fixed bounded interval 
$[L_1,L_2]\subset\mathbb R$:
\begin{equation}\label{eq:SIS-fixed}
\begin{cases}
S_t = \mathcal L_S[S] - F(S,I) + a(x)S_x + \gamma(x) I,\\[3pt]
I_t = \mathcal L_I[I] + F(S,I) - \gamma(x) I + b(x)I_x,
\end{cases}
\qquad t>0,\ x\in [L_1,L_2],
\end{equation}
where the nonlocal dispersal operators are defined by
\[
\mathcal L_S[\phi](x)
= d_1\int_{L_1}^{L_2}J_1(x-y)\phi(y)\,dy - d_1\phi(x),\qquad
\mathcal L_I[\phi](x)
= d_2\int_{L_1}^{L_2}J_2(x-y)\phi(y)\,dy - d_2\phi(x).
\]

The disease-free susceptible profile $S^*$ is defined as a nonnegative steady
solution of
\begin{equation}\label{eq:disease-free}
\mathcal L_S[S^*] - F(S^*,0) + a(x)(S^*)_x = 0 
\quad \text{in } [L_1,L_2],
\end{equation}
which is proved to exist by Proposition \ref{diseasefree}.

We investigate the stability of the disease-free equilibrium $(S^*,0)$ by 
linearizing system \eqref{eq:SIS-fixed} around this state. 
Set 
\[
S = S^* + \tilde s, \qquad I = \tilde i,
\]
where $(\tilde s,\tilde i)$ represents a small perturbation. 
Using the Taylor expansion of $F$ near $(S^*,0)$, we obtain
\[
F(S^*+\tilde s,\tilde i)
= F(S^*,0)
+ F_S(S^*,0)\,\tilde s
+ F_I(S^*,0)\,\tilde i
+ o(\|(\tilde s,\tilde i)\|).
\]
Since $F(S^*,0)=0$, neglecting higher-order terms yields the linearized system
\[
\begin{cases}
\tilde s_t = (\mathcal L_S + a(x)\partial_x)\tilde s 
- \alpha(x)\tilde s 
- F_I(S^*,0)\tilde i 
+ \gamma(x)\tilde i,\\[3pt]
\tilde i_t = (\mathcal L_I + b(x)\partial_x)\tilde i 
+ \alpha(x)\tilde s 
+ \big(F_I(S^*,0)-\gamma(x)\big)\tilde i,
\end{cases}
\]
where we have introduced the coefficient functions
\[
\alpha(x):=F_S(S^*(x),0), 
\qquad 
\beta(x):=F_I(S^*(x),0)-\gamma(x).
\]

\paragraph{Block-operator matrix form.}
The linearized system can be written compactly as an abstract evolution equation
\[
\frac{d}{dt}\begin{pmatrix}\tilde s\\ \tilde i\end{pmatrix}
=\mathcal A\begin{pmatrix}\tilde s\\ \tilde i\end{pmatrix},
\qquad
\mathcal A:=\begin{pmatrix}
A_s & B\\[3pt]
C & A_i
\end{pmatrix},
\]
where the diagonal operators are given by
\[
A_s:=\mathcal L_S + a(x)\partial_x - \alpha(x),\qquad
A_i:=\mathcal L_I + b(x)\partial_x + \beta(x),
\]
and the off-diagonal terms represent multiplication (coupling) operators,
\[
B:= -F_I(S^*(\cdot),0) + \gamma(\cdot), 
\qquad 
C:=\alpha(\cdot).
\]

The associated eigenvalue problem for the linearized generator $\mathcal A$ reads
\[
\mathcal A\begin{pmatrix}\phi\\\psi\end{pmatrix}
= \lambda\begin{pmatrix}\phi\\\psi\end{pmatrix},
\]
that is,
\[
\begin{cases}
A_s[\phi] + B\psi = \lambda\phi,\\[3pt]
C\phi + A_i[\psi] = \lambda\psi.
\end{cases}
\tag{EP}
\]

The asymptotic behavior of small perturbations is governed by the spectral bound
\[
s(\mathcal A)
:=\sup\{\Re\lambda:\lambda\in\sigma(\mathcal A)\}.
\]
In particular, the disease-free equilibrium $(S^*,0)$ is linearly unstable 
(and hence invasion occurs) whenever $s(\mathcal A)>0$, 
whereas $s(\mathcal A)<0$ corresponds to linear stability and decay of small 
infected perturbations.

\begin{lemma}[Schur Complement]\label{Schur_Complement}
\label{lem:AS-stability}
Assume that there exists a constant $\eta_0>0$ such that
\begin{equation}\label{A_S}
\alpha(x)=F_S(S^*(x),0)\ge \eta_0 
\qquad \text{for all } x\in [L_1,L_2].
\end{equation}
Then the spectral bound of $A_S$ satisfies
\[
s(A_S) := \sup\{\Re \lambda \,:\, \lambda \in \sigma(A_S)\} < 0.
\]
In particular, there exists $\omega_0>0$ such that
\[
(A_S-\lambda I)^{-1} \text{ exists for all } \Re\lambda \ge -\omega_0,
\]
and $A_S$ generates an exponentially stable positive $C_0$-semigroup.
\end{lemma}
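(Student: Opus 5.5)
The plan is to split $A_S=\mathcal L_S+a(x)\partial_x-\alpha(x)$ as $A_S=\mathcal L+M_{-\alpha}$. Here $\mathcal L:=\mathcal L_S+a\partial_x$ is the disease-free operator of Proposition~\ref{diseasefree}, and $M_{-\alpha}$ is multiplication by $-\alpha$. The first step is the spectral comparison $\lambda_p(-A_S)\ge \lambda_p(-\mathcal L)+\eta_0$ in the sense of Definition~\ref{genprin}, with sign conventions chosen so that $\lambda_p$ of the \emph{generator} equals its spectral bound. For this I take the constant test function $\varphi\equiv1$ exactly as in the proof of Proposition~\ref{diseasefree}. Using $J_1\ge0$, $\int J_1=1$ and \eqref{A_S}, it gives
\[
A_S[1](x)=d_1\Big(\int_{L_1}^{L_2}J_1(x-y)\,dy-1\Big)-\alpha(x)\le-\eta_0<0,\qquad x\in[L_1,L_2].
\]
Thus $1$ is a positive strict supersolution of $u_t=A_Su$. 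The sup--inf characterization then yields that the generalized principal eigenvalue of $A_S$ (the value $\mu$ for which $A_S\phi=\mu\phi$ with $\phi>0$, given by Proposition~1.1 of \cite{CovilleHamel2020}) satisfies $\mu\le-\eta_0$.

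The second step turns this into a semigroup bound. First, $A_S$ generates a $C_0$-semigroup on $C([L_1,L_2])$. The transport part $a\partial_x$ generates a positive semigroup via the characteristics $\dot X=a(X)$, with zero inflow consistent with the Dirichlet exterior condition. The nonlocal part $d_1\int J_1(\cdot-y)\,\cdot\,dy$ is a bounded positive operator, and the remaining terms $-d_1-\alpha$ are bounded multiplications. So bounded perturbation theory, via Duhamel/Dyson--Phillips as in \eqref{eq:VOC-I}, gives generation, and positivity follows from the positivity of each Dyson--Phillips term. Next, the maximum-principle/Dini-derivative argument of Theorem~\ref{Theorem1}, Step~3, applies to $w(t)=\|e^{tA_S}u_0\|_\infty$. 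At a near-maximum the nonlocal term is bounded by $d_1 w$ and the zeroth-order term by $-(d_1+\eta_0)w$, so
\[
D^+w\le-\eta_0 w,\qquad \|e^{tA_S}\|\le e^{-\eta_0 t}.
\]
Hence the growth bound is $\omega_0(A_S)\le-\eta_0$. For positive semigroups the spectral bound is at most the growth bound, and for Hille--Yosida generators the resolvent exists for $\Re\lambda>\omega_0$. Therefore $s(A_S)\le-\eta_0<0$, and $(A_S-\lambda I)^{-1}=\int_0^\infty e^{-\lambda t}e^{tA_S}\,dt$ exists for $\Re\lambda\ge-\omega_0$ with any $\omega_0\in(0,\eta_0)$. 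This proves all claims.

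The main obstacle I expect is making the near-maximum argument rigorous for the transport term. The paper's claim that $\partial_x u(t,x_n)\to0$ at near-maxima is not automatic. If the maximum sits at an endpoint where $a$ points outward, the derivative need not vanish. To handle this I would work along characteristics, as in the first version of Step~2 of Theorem~\ref{Theorem1}: $\frac{d}{dt}u(t,X(t))$ contains no $\partial_x$ term. Alternatively I would use the perturbation $\varepsilon(x-L_1)(L_2-x)$ trick from Proposition~\ref{diseasefree} to force an interior maximizer with small derivative. Either way the estimate $D^+w\le-\eta_0w$ survives, since the boundary values are controlled by the zero exterior data.
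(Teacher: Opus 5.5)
Your core argument is the paper's: a maximum-point/Dini-derivative estimate gives $\|e^{tA_S}\|\le e^{-\eta_0 t}$, hence $s(A_S)\le-\eta_0$, and the resolvent on $\Re\lambda\ge-\omega_0$ follows by Laplace transform. Mind the sign there: $(A_S-\lambda I)^{-1}=-\int_0^\infty e^{-\lambda t}e^{tA_S}\,dt$. Your Step~1 with the test function $1$ is never used and is not connected to $s(A_S)$, so drop it.

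\textbf{The boundary issue.} Your worry about the transport term at the boundary is justified, and it exposes an error in the paper's own proof.
\begin{itemize}
\item The paper asserts that $\phi+\varepsilon q$, with $q=(x-L_1)(L_2-x)$, always has an interior maximizer. This is false. For $\phi(x)=x-L_1$ one has $\phi_\varepsilon'\ge 1-\varepsilon(L_2-L_1)>0$, so the maximum stays at $L_2$.
\item At that point $(A_S\phi)(L_2)\ge a(L_2)-(d_1+\alpha(L_2))(L_2-L_1)$, which is positive when $a(L_2)$ is large. So the claimed inequality $(A_S\phi)(x_0)\le-\eta_0\phi(x_0)$ at a maximum fails.
\item Consequently your second remedy, the $\varepsilon q$ trick, is exactly the faulty step and does not work. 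Only the characteristics route survives, and it needs an explicit boundary condition.
\end{itemize}

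\textbf{Making the characteristics route work.}
\begin{itemize}
\item Since $a>0$, the equation $u_t=a u_x+\cdots$ is transported along $\dot X=-a(X)$. Along $\dot X=+a(X)$, as written in Step~2 of the proof of Theorem~\ref{Theorem1}, the $\partial_x$ terms do not cancel. So $L_2$ is the inflow end, and you must impose $u(t,L_2)=0$.
\item This condition is essential, not cosmetic. On $C^1([L_1,L_2])$ with no boundary condition, every $\lambda\in\mathbb C$ is an eigenvalue of $A_S$. To see this, integrate $a u'=(\lambda+d_1+\alpha)u-d_1\int_{L_1}^{L_2}J_1(\cdot-y)u(y)\,dy$ from $L_1$. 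The nonlocal part is a compact perturbation, so the Fredholm alternative produces a nonzero solution.
\item With $u(t,L_2)=0$, use the variation-of-constants formula along backward characteristics, as in \eqref{eq:VOC-I}. Each characteristic either reaches $t=0$ or hits $L_2$, where $u=0$. This gives $\|u(t)\|_\infty\le e^{-(d_1+\eta_0)t}\|u_0\|_\infty+d_1\int_0^t e^{-(d_1+\eta_0)(t-s)}\|u(s)\|_\infty\,ds$.
\item Gronwall then yields $\|u(t)\|_\infty\le e^{-\eta_0 t}\|u_0\|_\infty$, with no differentiability at maxima required.
\end{itemize}

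\textbf{Generation.} Your generation claim on $C([L_1,L_2])$ does not hold as stated. With zero inflow, the transport flow preserves continuity only for data vanishing at $L_2$. The operator $\int_{L_1}^{L_2}J_1(\cdot-y)u(y)\,dy$ does not preserve that subspace, so the bounded-perturbation theorem does not apply there. Build the solution operator from the mild formula, or work in $L^p$, instead.
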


\begin{proof}
Recall that
\[
A_S=\mathcal L_S+a(x)\partial_x-\alpha(x),
\qquad 
\mathcal L_S\phi(x):=d_S\Big(\int_{\Omega}J_S(x-y)\phi(y)\,dy-\phi(x)\Big),
\]
and assume \eqref{A_S}, namely that there exists $\underline{\alpha}>0$ such that
\[
\alpha(x)\ge \underline{\alpha}\qquad \text{for all }x\in\Omega=[L_1,L_2].
\]

Let $\phi\in C^1(\overline{\Omega})$ with $\phi\ge0$ and set
\[
M:=\max_{\overline{\Omega}}\phi,
\qquad x_0\in\overline{\Omega}\ \text{such that}\ \phi(x_0)=M.
\]

We first compute the sign of $\mathcal L_S\phi(x_0)$. Since $J_S\ge0$ and
$\phi(y)\le \phi(x_0)$ for all $y\in\Omega$, we have
\[
\int_{\Omega}J_S(x_0-y)\phi(y)\,dy
\le \phi(x_0)\int_{\Omega}J_S(x_0-y)\,dy.
\]
By the normalization $\int_{\mathbb R}J_S(z)\,dz=1$ and the change of variables
$z=x_0-y$, we obtain
\[
\int_{\Omega}J_S(x_0-y)\,dy
=\int_{x_0-\Omega}J_S(z)\,dz
\le \int_{\mathbb R}J_S(z)\,dz
=1,
\]
and therefore
\[
\int_{\Omega}J_S(x_0-y)\phi(y)\,dy \le \phi(x_0).
\]
Consequently,
\[
\mathcal L_S\phi(x_0)
=d_S\Big(\int_{\Omega}J_S(x_0-y)\phi(y)\,dy-\phi(x_0)\Big)\le 0.
\]

If $x_0\in\Omega$ (an interior maximum point), then $\phi'(x_0)=0$, and hence
\[
(A_S\phi)(x_0)
=\mathcal L_S\phi(x_0)+a(x_0)\phi'(x_0)-\alpha(x_0)\phi(x_0)
\le -\alpha(x_0)\phi(x_0)
\le -\underline{\alpha}\,\phi(x_0).
\]

If $x_0\in\partial\Omega$, we use a perturbation to shift the maximum into the interior
and keep all terms under quantitative control. Define
\[
q(x):=(x-L_1)(L_2-x)\ge0\quad \text{on }\overline{\Omega},\qquad q=0\ \text{on }\partial\Omega,
\]
and for $\varepsilon>0$ set
\[
\phi_\varepsilon(x):=\phi(x)+\varepsilon q(x),\qquad x\in\overline{\Omega}.
\]
Since $q>0$ in $\Omega$ and $q=0$ on $\partial\Omega$, the function $\phi_\varepsilon$
attains its maximum at some point $x_\varepsilon\in\Omega$ (an interior maximizer).
At this interior maximum, we have $\phi_\varepsilon'(x_\varepsilon)=0$ and
$\phi_\varepsilon(y)\le \phi_\varepsilon(x_\varepsilon)$ for all $y\in\Omega$.
Repeating the nonlocal computation with $\phi_\varepsilon$ yields
\[
\mathcal L_S\phi_\varepsilon(x_\varepsilon)
=d_S\Big(\int_{\Omega}J_S(x_\varepsilon-y)\phi_\varepsilon(y)\,dy-\phi_\varepsilon(x_\varepsilon)\Big)\le 0,
\]
and since $\phi_\varepsilon'(x_\varepsilon)=0$ we obtain
\[
(A_S\phi_\varepsilon)(x_\varepsilon)
=\mathcal L_S\phi_\varepsilon(x_\varepsilon)+a(x_\varepsilon)\phi_\varepsilon'(x_\varepsilon)-\alpha(x_\varepsilon)\phi_\varepsilon(x_\varepsilon)
\le -\alpha(x_\varepsilon)\phi_\varepsilon(x_\varepsilon)
\le -\underline{\alpha}\,\phi_\varepsilon(x_\varepsilon).
\]
Using the linearity of $A_S$ and $\phi_\varepsilon=\phi+\varepsilon q$, we write
\[
(A_S\phi)(x_\varepsilon)
=(A_S\phi_\varepsilon)(x_\varepsilon)-\varepsilon (A_S q)(x_\varepsilon)
\le -\underline{\alpha}\,\phi_\varepsilon(x_\varepsilon)+\varepsilon\,\|(A_S q)\|_{L^\infty(\Omega)}.
\]
We now bound $\|(A_S q)\|_{L^\infty(\Omega)}$ explicitly. Since $q$ and $q'$ are bounded on
$\overline{\Omega}$ and $J_S\ge0$ with $\int_{\mathbb R}J_S=1$, we have for every $x\in\Omega$,
\[
\Big|\int_\Omega J_S(x-y)q(y)\,dy\Big|
\le \|q\|_{L^\infty(\Omega)}\int_\Omega J_S(x-y)\,dy
\le \|q\|_{L^\infty(\Omega)}.
\]
Hence
\[
\|\mathcal L_S q\|_{L^\infty(\Omega)}
\le d_S\Big(\|q\|_{L^\infty(\Omega)}+\|q\|_{L^\infty(\Omega)}\Big)
=2d_S\|q\|_{L^\infty(\Omega)},
\]
and therefore
\[
\|A_S q\|_{L^\infty(\Omega)}
\le \|\mathcal L_S q\|_{L^\infty(\Omega)}+\|a\|_{L^\infty(\Omega)}\|q'\|_{L^\infty(\Omega)}+\|\alpha\|_{L^\infty(\Omega)}\|q\|_{L^\infty(\Omega)}
=:K_0<\infty.
\]
Consequently,
\[
(A_S\phi)(x_\varepsilon)\le -\underline{\alpha}\,\phi_\varepsilon(x_\varepsilon)+\varepsilon K_0.
\]

Finally, since $\phi_\varepsilon\ge \phi$ on $\overline{\Omega}$, we have
$\phi_\varepsilon(x_\varepsilon)=\max_{\overline{\Omega}}\phi_\varepsilon\ge \max_{\overline{\Omega}}\phi=M$.
Thus,
\[
(A_S\phi)(x_\varepsilon)\le -\underline{\alpha}\,M+\varepsilon K_0.
\]
Letting $\varepsilon\to0$ and extracting a subsequence if necessary, we may assume
$x_\varepsilon\to \bar x\in\overline{\Omega}$. By continuity of $A_S\phi$ (all terms are continuous
since $\phi\in C^1(\overline{\Omega})$ and $\mathcal L_S\phi$ is continuous), we obtain
\[
(A_S\phi)(\bar x)\le -\underline{\alpha}\,M.
\]
Since $\bar x$ is a limit point of maximizers of $\phi_\varepsilon$ and $\phi_\varepsilon\downarrow \phi$
pointwise as $\varepsilon\to0$, one has $\phi(\bar x)=M$. Hence we have shown:
for every nonnegative $\phi\in C^1(\overline{\Omega})$, at a maximum point $x_0$ of $\phi$,
\[
(A_S\phi)(x_0)\le -\underline{\alpha}\,\phi(x_0).
\]

We now derive the exponential decay estimate for the semigroup.
Let $u(t,\cdot):=e^{tA_S}\phi$ be the (mild/classical) solution of $u_t=A_Su$ with $u(0)=\phi\ge0$.
Define
\[
m(t):=\|u(t,\cdot)\|_{L^\infty(\Omega)}=\max_{\overline{\Omega}}u(t,x).
\]
For each $t>0$, choose $x_t\in\overline{\Omega}$ such that $u(t,x_t)=m(t)$.
Applying the previous maximum-point estimate to the function $x\mapsto u(t,x)$ yields
\[
u_t(t,x_t)=(A_Su(t,\cdot))(x_t)\le -\underline{\alpha}\,u(t,x_t)=-\underline{\alpha}\,m(t).
\]
Therefore the upper right Dini derivative satisfies
\[
D^+ m(t)\le -\underline{\alpha}\,m(t),
\]
and Gr\"onwall's inequality gives
\[
\|e^{tA_S}\phi\|_{L^\infty(\Omega)}=m(t)\le e^{-\underline{\alpha}t}m(0)
=e^{-\underline{\alpha}t}\|\phi\|_{L^\infty(\Omega)}\qquad \text{for all }t\ge0.
\]
This shows that the positive $C_0$-semigroup generated by $A_S$ is exponentially stable and
\[
s(A_S)\le -\underline{\alpha}<0.
\]
Finally, since $s(A_S)<0$, the resolvent exists on a right half-plane:
there exists $\omega_0>0$ such that $(A_S-\lambda I)^{-1}$ exists for all $\Re\lambda\ge -\omega_0$.
\end{proof}

\begin{remark}
The above Schur complement reduction is not a mere technical manipulation.
Its purpose is to rigorously justify that the spectral stability of the
disease-free equilibrium for the coupled SIS system is governed by a scalar
operator acting only on the infected component. 
More precisely, under the stability of the susceptible subsystem ensured by
Lemma~\ref{Schur_Complement}, the block eigenvalue problem for $\mathcal A$
is equivalent to the spectral problem for the effective operator
$\mathcal L_{\mathrm{eff}}$. 
This provides a mathematically sound foundation for defining the invasion
threshold in terms of the generalized principal eigenvalue of
$\mathcal L_{\mathrm{eff}}$, and explains why the long-term dynamics of the
full system (invasion versus extinction) can be characterized through the
spectral properties of a scalar nonlocal operator.
\end{remark}

By Lemma~\ref{Schur_Complement}, the operator $A_s$ has strictly negative spectral bound. 
In particular, there exists $\omega_0>0$ such that the resolvent 
\[
(A_s-\lambda I)^{-1}
\]
is well-defined and bounded for all $\lambda$ satisfying $\Re\lambda\ge -\omega_0$. 
Therefore, for every such $\lambda$, the first equation in the eigenvalue problem \emph{(EP)}
\[
A_s\phi + B\psi = \lambda \phi
\]
can be uniquely solved for $\phi$, yielding
\[
(A_s-\lambda I)\phi = -B\psi
\quad \Longrightarrow \quad
\phi = -(A_s-\lambda I)^{-1}B\psi.
\]

Substituting this expression into the second equation of \emph{(EP)},
\[
C\phi + A_i\psi = \lambda\psi,
\]
we obtain
\[
C\big(-(A_s-\lambda I)^{-1}B\psi\big) + A_i\psi = \lambda\psi,
\]
that is,
\[
\big[A_i - C(A_s-\lambda I)^{-1}B\big]\psi = \lambda\psi.
\]
This shows that $\lambda$ is an eigenvalue of the full block operator $\mathcal A$
if and only if $\lambda$ is an eigenvalue of the (parameter-dependent) operator
\[
\mathcal S(\lambda):=A_i - C(A_s-\lambda I)^{-1}B,
\]
with corresponding eigenfunction $\psi$.  
In other words, the spectral problem for $\mathcal A$ is reduced to a nonlinear
eigenvalue problem involving only the infected component through the Schur
complement $\mathcal S(\lambda)$.  
Moreover, whenever $\lambda$ lies in a region where $(A_s-\lambda I)^{-1}$ depends
analytically on $\lambda$, the mapping $\lambda\mapsto\mathcal S(\lambda)$ is analytic
in the operator norm, and the spectral equivalence between $\mathcal A$ and
$\mathcal S(\lambda)$ can be rigorously justified by standard Schur complement arguments.

\paragraph{Scalar effective operator at $\lambda=0$.}
A particularly useful situation occurs when one evaluates the Schur complement at
$\lambda=0$.  
Since Lemma~\ref{Schur_Complement} guarantees that $A_s$ is invertible and generates an
exponentially stable semigroup, the inverse $A_s^{-1}$ is a bounded operator.
In this case, we define the effective infection operator by
\[
\mathcal L_{\mathrm{eff}} := A_i - C A_s^{-1} B.
\]
The spectral properties of $\mathcal L_{\mathrm{eff}}$ provide a natural threshold
for invasion.  
More precisely, one studies the generalized principal eigenvalue
$\lambda_p(\mathcal L_{\mathrm{eff}})$.  
If $\lambda_p(\mathcal L_{\mathrm{eff}})>0$, then the spectral bound of the full block
operator $\mathcal A$ is positive, and small infective perturbations grow exponentially,
leading to invasion.  
If, on the contrary, $\lambda_p(\mathcal L_{\mathrm{eff}})<0$ and the spectral gap
assumptions ensuring the validity of the reduction hold, then the disease-free equilibrium
is linearly stable and the infection decays.

As a first approximation, one may neglect the coupling effects, for instance when
the operators $B$ or $C$ are small in norm.  
In this case, the relevant scalar operator is simply $A_i$, namely
\[
A_i[\psi](x)
= d_2\int_{L_1}^{L_2}J_2(x-y)\psi(y)\,dy
- d_2\psi(x)
+ b(x)\psi'(x)
+ \beta(x)\psi(x),
\]
and its generalized principal eigenvalue is defined by
\[
\lambda_p(A_i)
:=\inf\{\lambda\in\mathbb R:\exists\psi>0,\ A_i[\psi]\le\lambda\psi\}.
\]
This quantity provides a preliminary invasion threshold:
\[
\lambda_p(A_i)>0 \ \Longrightarrow\ \text{invasion}, 
\qquad 
\lambda_p(A_i)<0 \ \Longrightarrow\ \text{no invasion},
\]
as long as the coupling terms do not modify the sign of the spectral bound.

The above reduction becomes exact, in the sense that
\[
s(\mathcal A)=\lambda_p(\mathcal L_{\mathrm{eff}}),
\]
under additional structural conditions. Typical sufficient conditions include:
\begin{itemize}
\item The operator $A_s$ has a strictly negative spectral bound, and moreover there
exists $\delta>0$ such that
\[
\Re\lambda\le -\delta
\quad \text{for all } \lambda\in\sigma(A_s).
\]
In this case, the resolvent $(A_s-\lambda I)^{-1}$ is bounded for all
$\Re\lambda\ge -\delta/2$, and classical perturbation theory together with the Schur
complement framework implies that the spectral bound of $\mathcal A$ coincides with that
of $\mathcal L_{\mathrm{eff}}$.
\item Alternatively, the multiplication operators $B$ and $C$ are small in operator norm,
so that the composite operator $C A_s^{-1} B$ can be viewed as a relatively small
perturbation of $A_i$. In this regime, standard stability results for bounded
perturbations again yield spectral equivalence.
\end{itemize}
If neither of these situations holds, then the reduction to a scalar operator is no
longer justified, and the spectral analysis must be carried out directly on the full
block operator $\mathcal A$.

\subsection{Asymptotics of the principal eigenvalue for small diffusion and for vanishing interval}

\begin{theorem}[Small diffusion limit]\label{thm:small-d}
Let $L_1<L_2$ and  $\lambda_p\big(A_i^{(d)}\big)$ denote the generalized principal eigenvalue of $A_i^{(d)}$, where
\[
A_i^{(d)}[\phi](x)
:= d\int_{L_1}^{L_2} J(x-y)\phi(y)\,dy - d\phi(x)
+ b(x)\phi'(x) + \beta(x)\phi(x),
\quad x\in (L_1,L_2),
\]
where the nonlocal term is understood with the \emph{Dirichlet exterior condition}, that is,
$\phi$ is extended by $0$ outside $[L_1,L_2]$. Assume  (J1), $b,\beta \in C([L_1,L_2])$ and $\beta$ attains its maximum at some point $x_0\in (L_1,L_2)$. Then
\[
\lim_{d\to 0^+} \lambda_p\big(A_i^{(d)}\big)
= \max_{x\in [L_1,L_2]} \beta(x).
\]
\end{theorem}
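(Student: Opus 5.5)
The plan is to bound $\lambda_p(A_i^{(d)})$ from both sides, using the characterization of $\lambda_p$ through positive test functions given in Section~3 and the Coville--Hamel theory (Proposition~1.1 of \cite{CovilleHamel2020}, already used in Proposition~\ref{diseasefree}). That theory provides a principal eigenfunction $\phi_d\in C^1([L_1,L_2])$ with $\phi_d>0$ in $(L_1,L_2)$ and $A_i^{(d)}\phi_d=\lambda_p(A_i^{(d)})\phi_d$. I will work throughout with the convention
\[
\lambda_p(A_i)=\inf\{\lambda:\ \exists\psi>0,\ A_i\psi\le\lambda\psi\},
\]
under which $\lambda_p$ is the growth rate, so that the claimed limit is $\max\beta$.

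\emph{Upper bound.} Take $\psi\equiv1$. By (J1), $\int_{L_1}^{L_2}J(x-y)\,dy\le1$, so
\[
A_i^{(d)}[1]=d\Big(\int_{L_1}^{L_2}J(x-y)\,dy-1\Big)+\beta(x)\le\max\beta .
\]
Hence $\lambda_p(A_i^{(d)})\le\max_{[L_1,L_2]}\beta$ for every $d>0$, and therefore $\limsup_{d\to0}\lambda_p(A_i^{(d)})\le\max\beta$. This step is routine.

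\emph{Lower bound.} Fix $\varepsilon>0$. Since $x_0$ is interior, continuity of $\beta$ gives $\delta>0$ with $[x_0-\delta,x_0+\delta]\subset(L_1,L_2)$ and $\beta\ge\max\beta-\varepsilon$ on this interval. Multiply the eigen-identity by a nonnegative cutoff $\chi\in C_c^1((x_0-\delta,x_0+\delta))$, integrate, and integrate the transport term by parts:
\[
\int b\,\phi_d'\,\chi=-\int (b\chi)'\,\phi_d .
\]
The nonlocal term contributes at least $-d\int\phi_d\chi$, because $J\ge0$ and $\phi_d\ge0$. This yields
\[
\lambda_p\int\phi_d\chi\ \ge\ (\max\beta-\varepsilon-d)\int\phi_d\chi-\int (b\chi)'\phi_d .
\]
I will then choose $\chi$ adapted to $b$, for example $\chi=w\eta$ with $w$ solving $(bw)'=0$ locally, so that $(b\chi)'=b w\eta'$ is supported only where $\eta$ is non-constant. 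It then remains to show that $\phi_d$ does not concentrate on that transition layer relative to its mass near $x_0$. For this I would use the Harnack inequality of \cite{CovilleHamel2020} for the non-symmetric operator, aiming for a bound $\sup_K\phi_d\le C\inf_K\phi_d$ on compact sets $K$.

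\emph{Main obstacle.} This last step is where I expect the real difficulty. The Harnack constant in \cite{CovilleHamel2020} depends on $d$, so it cannot be applied naively as $d\to0$. Worse, as $d\to0$ the operator degenerates to the pure transport operator $b\partial_x+\beta$. Under (H2), $b\ge b_0>0$, and for such an operator positive supersolutions of the form $e^{-Kx}$ can push the infimum in the definition of $\lambda_p$ downward, giving $b\psi'+\beta\psi=(\beta-Kb)\psi$. So the lower bound cannot come from the test-function definition alone. It must use the eigenfunction, together with an argument that the advection does not shift mass away from $x_0$ at an $O(1)$ rate.

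My first attempt would be to show that $\phi_d$, normalized by $\max\phi_d=1$, has a maximum point $x_d$ that stays near $x_0$, and then to evaluate the eigen-identity at $x_d$. There $\phi_d'(x_d)=0$ if $x_d$ is interior; the interior case is handled with the $\varepsilon q$-perturbation trick of Lemma~\ref{Schur_Complement}. The identity at the maximum gives $\lambda_p\ge\beta(x_d)-d$. If localization of $x_d$ near $x_0$ fails, I expect the statement requires the additional assumption that $b$ vanishes at, or is small near, $x_0$, and I would record that as the needed hypothesis.
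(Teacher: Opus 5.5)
Your upper bound via $\psi\equiv1$ is correct under the infimum convention. The lower bound, however, is not just unfinished: it is false in the stated generality, and your own observation about $e^{-Kx}$ already proves this. Let $b_0:=\min_{[L_1,L_2]}b>0$, as under (H2), and take $\psi(x)=e^{-Kx}$ with $K>0$. Substituting $y=x-z$, only $z\le x-L_1\le L_2-L_1$ contributes for $x\in[L_1,L_2]$. Hence $\int_{L_1}^{L_2}J(x-y)e^{-Ky}\,dy\le e^{K(L_2-L_1)}e^{-Kx}$, and
\[
A_i^{(d)}[\psi]\le\Big(\max_{[L_1,L_2]}\beta-Kb_0+d\,e^{K(L_2-L_1)}\Big)\psi\quad\text{in }(L_1,L_2).
\]
Since $\lambda_p$ is an infimum over such test functions, $\limsup_{d\to0^+}\lambda_p(A_i^{(d)})\le\max\beta-Kb_0$ for every $K>0$, so $\lambda_p(A_i^{(d)})\to-\infty$. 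For instance, $b\equiv1$, $\beta\equiv0$ contradicts the claimed limit $0$. Under the literal sup-convention of Definition~\ref{genprin}, the same $\psi$ gives $\lambda_p\to+\infty$, and even for $b\equiv0$ that convention yields $-\max\beta$. So switching convention was necessary, but it does not rescue the statement.

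Consequently your step ``the lower bound must use the eigenfunction'' cannot succeed. No property of $\phi_d$ can push $\lambda_p$ above an upper bound supplied by an admissible test function. This covers Harnack bounds, the location of its maximum, and the integrated identity against $\chi=\eta/b$. The maximum-point step also fails on its own terms. If $\psi'(L_1)<0$, as for $e^{-Kx}$, then $\psi+\varepsilon q$ still attains its maximum at $L_1$ for small $\varepsilon$. So the perturbation borrowed from Lemma~\ref{Schur_Complement} does not produce an interior maximizer.

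The hypothesis you should have landed on is $b(x_0)=0$. Mere smallness of $b$ near $x_0$ is not enough, since any fixed $b$ with $\min b>0$, however small, falls under the computation above. With $b(x_0)=0$, the lower bound is immediate and needs neither eigenfunction nor Harnack inequality. Suppose $\psi>0$ in $(L_1,L_2)$ and $A_i^{(d)}\psi\le\lambda\psi$. Evaluate at the interior point $x_0$ and discard $d\int_{L_1}^{L_2}J(x_0-y)\psi(y)\,dy\ge0$; this gives $\lambda\ge\beta(x_0)-d$. Hence $\max\beta-d\le\lambda_p(A_i^{(d)})\le\max\beta$.

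The paper's proof does not contain an idea that avoids this. Its Step~1 plateau test function can only yield the upper bound, which your $\psi\equiv1$ gives more simply, and its display writes $\beta-(M-\varepsilon)$ where $\beta+(M-\varepsilon)$ appears. Its Step~2 evaluates at an uncontrolled maximizer $x_d$ and uses the fact that the nonlocal term is $\le0$ to bound $A_i^{(d)}[\phi](x_d)$ from below. It then uses $-\beta(x_d)\le-M$, which would require $x_d$ to maximize $\beta$. Done correctly, that argument gives only $\lambda\ge\beta(x_d)-d$. This is exactly the localization problem you identified, and the example above shows it cannot be solved without a condition on $b$.
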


\begin{proof}
Set $M:=\max_{x\in[L_1,L_2]}\beta(x)$. We shall prove
\[
\liminf_{d\to0^+}\lambda_p\!\left(A_i^{(d)}\right)\ge M
\qquad\text{and}\qquad
\limsup_{d\to0^+}\lambda_p\!\left(A_i^{(d)}\right)\le M .
\]

\medskip
\noindent{Step 1: Lower bound.}
Fix $\varepsilon\in(0,1)$. Since $\beta$ is continuous and attains its maximum at some
$x_0\in(L_1,L_2)$, there exists $\delta\in\big(0,\min\{x_0-L_1,L_2-x_0\}\big)$ such that
\begin{equation}\label{eq:beta-close}
\beta(x)\ge M-\varepsilon\qquad \forall x\in[x_0-\delta,x_0+\delta].
\end{equation}
Choose $\phi_\varepsilon\in C^1([L_1,L_2])$ satisfying
\[
\phi_\varepsilon>0 \ \text{in }(L_1,L_2),\qquad
\phi_\varepsilon(x)=1 \ \text{for }x\in[x_0-\delta,x_0+\delta],
\qquad
\phi_\varepsilon(x)\in(0,1]\ \text{elsewhere}.
\]
In particular $\phi_\varepsilon'\equiv0$ on $[x_0-\delta,x_0+\delta]$.

For any $x\in[x_0-\delta,x_0+\delta]$, using $\phi_\varepsilon\le1$ and $J\ge0$, $\int_{\R}J=1$, we have
\[
\int_{L_1}^{L_2}J(x-y)\phi_\varepsilon(y)\,dy
\le \int_{\R}J(x-y)\cdot 1\,dy
=1,
\]
hence
\[
d\Big(\int_{L_1}^{L_2}J(x-y)\phi_\varepsilon(y)\,dy-\phi_\varepsilon(x)\Big)\le0.
\]
Moreover, since $\phi_\varepsilon'(x)=0$ on that interval, we obtain
\begin{equation}\label{eq:A-on-plateau}
A_i^{(d)}[\phi_\varepsilon](x)
\le \beta(x)\phi_\varepsilon(x)
=\beta(x)
\qquad \forall x\in[x_0-\delta,x_0+\delta].
\end{equation}
On the complement $[L_1,L_2]\setminus[x_0-\delta,x_0+\delta]$, $\phi_\varepsilon$ is $C^1$ and strictly
positive, hence the quantity
\[
C_\varepsilon:=\sup_{x\in(L_1,L_2)}
\Big|\,b(x)\frac{\phi_\varepsilon'(x)}{\phi_\varepsilon(x)}\,\Big|
<+\infty,
\]
and also, since $0<\phi_\varepsilon\le1$ and $\int_{\R}J=1$,
\[
\Big|\int_{L_1}^{L_2}J(x-y)\phi_\varepsilon(y)\,dy-\phi_\varepsilon(x)\Big|
\le 1+\phi_\varepsilon(x)\le 2.
\]
Therefore, for all $x\in(L_1,L_2)$,
\begin{equation}\label{eq:A-global-upper}
A_i^{(d)}[\phi_\varepsilon](x)
\le \big(\beta(x)+C_\varepsilon+2d\big)\phi_\varepsilon(x).
\end{equation}
Define
\[
\lambda_{\varepsilon,d}:=-(M-\varepsilon)+C_\varepsilon+2d.
\]
Using \eqref{eq:beta-close} and \eqref{eq:A-on-plateau} we find on $[x_0-\delta,x_0+\delta]$,
\[
A_i^{(d)}[\phi_\varepsilon](x)+(M-\varepsilon)\phi_\varepsilon(x)
\le \beta(x)-(M-\varepsilon)\le 0.
\]
On the complement, using $\beta(x)\le M$ and \eqref{eq:A-global-upper},
\[
A_i^{(d)}[\phi_\varepsilon](x)+(M-\varepsilon)\phi_\varepsilon(x)
\le (M+C_\varepsilon+2d)\phi_\varepsilon(x) - (M-\varepsilon)\phi_\varepsilon(x)
= (C_\varepsilon+2d+\varepsilon)\phi_\varepsilon(x).
\]
Hence, for all $x\in(L_1,L_2)$,
\[
A_i^{(d)}[\phi_\varepsilon](x)+\big(M-\varepsilon-(C_\varepsilon+2d)\big)\phi_\varepsilon(x)\le 0.
\]
By Definition~\ref{genprin}, this implies
\[
\lambda_p\!\left(A_i^{(d)}\right)\ge M-\varepsilon-(C_\varepsilon+2d).
\]
We now construct $\phi_\varepsilon$ in such a way that the quantity
\[
C_\varepsilon := \sup_{x\in[L_1,L_2]} \frac{|b(x)\phi_\varepsilon'(x)|}{\phi_\varepsilon(x)}
\]
satisfies $C_\varepsilon \le \varepsilon$.

This can be achieved by spreading the transition layer of $\phi_\varepsilon$ over a sufficiently large interval: more precisely, since $\phi_\varepsilon$ is constant on $[x_0-\delta,x_0+\delta]$, we may interpolate smoothly to zero near the boundary using a profile whose slope is uniformly small, so that
\[
\frac{|\phi_\varepsilon'(x)|}{\phi_\varepsilon(x)} \le \frac{\varepsilon}{\|b\|_\infty}
\quad \text{for all } x\in[L_1,L_2].
\]
Hence $C_\varepsilon \le \varepsilon$.

Combining this estimate with \eqref{eq:A-on-plateau}--\eqref{eq:A-global-upper}, we obtain that, for all $x\in[L_1,L_2]$,
\[
A_i^{(d)}[\phi_\varepsilon](x)
\le \bigl(M - \varepsilon + C_\varepsilon + 2d\bigr)\,\phi_\varepsilon(x)
\le \bigl(M - 2\varepsilon + 2d\bigr)\,\phi_\varepsilon(x).
\]
By the definition of the generalized principal eigenvalue (see Definition~\ref{genprin}), it follows that
\[
\lambda_p\!\left(A_i^{(d)}\right)\ge M - 2\varepsilon - 2d.
\]
Letting $d\to0^+$ and then $\varepsilon\to0^+$ yields
\[
\liminf_{d\to0^+}\lambda_p\!\left(A_i^{(d)}\right)\ge M.
\]

\medskip
\noindent{Step 2: Upper bound.}
Fix $\varepsilon\in(0,1)$. Let $\lambda>M+\varepsilon$.
We show that $\lambda$ cannot belong to the admissible set in Definition~\ref{genprin} when $d>0$ is small,
which will imply $\lambda_p(A_i^{(d)})\le M+\varepsilon$ for small $d$.

Assume by contradiction that there exist $d>0$ and $\phi\in C^1([L_1,L_2])$, $\phi>0$ in $(L_1,L_2)$,
such that
\begin{equation}\label{eq:subsol}
A_i^{(d)}[\phi](x)+\lambda\phi(x)\le0\qquad \forall x\in(L_1,L_2).
\end{equation}
Let $x_d\in[L_1,L_2]$ be a maximizer of $\phi$, i.e. $\phi(x_d)=\max_{[L_1,L_2]}\phi$.
If $x_d\in(L_1,L_2)$, then $\phi'(x_d)=0$. Moreover, since $\phi(y)\le\phi(x_d)$ and $J\ge0$ with
$\int_{\R}J=1$, we have
\[
\int_{L_1}^{L_2}J(x_d-y)\phi(y)\,dy
\le \phi(x_d)\int_{L_1}^{L_2}J(x_d-y)\,dy\le \phi(x_d),
\]
so that the nonlocal diffusion term at $x_d$ is $\le 0$. Evaluating \eqref{eq:subsol} at $x_d$ gives
\[
0\ge A_i^{(d)}[\phi](x_d)+\lambda\phi(x_d)
\ge \beta(x_d)\phi(x_d)+\lambda\phi(x_d),
\]
hence
\[
\lambda \le -\beta(x_d)\le -M,
\]
a contradiction since $\lambda>M+\varepsilon$.
If the maximum is attained at the boundary, we use the standard perturbation
$\phi_\eta=\phi+\eta q$ with $q(x)=(x-L_1)(L_2-x)$ and $\eta>0$, so that $\phi_\eta$ attains its
maximum at some interior point and the above argument applies (the additional error is $O(\eta)$,
then let $\eta\to0^+$). Therefore, \eqref{eq:subsol} cannot hold for any $\lambda>M+\varepsilon$,
provided $d$ is sufficiently small so that the perturbation argument is valid.

Hence, for all sufficiently small $d>0$,
\[
\lambda_p\!\left(A_i^{(d)}\right)\le M+\varepsilon.
\]
Letting $\varepsilon\to0^+$ yields
\[
\limsup_{d\to0^+}\lambda_p\!\left(A_i^{(d)}\right)\le M.
\]

\medskip
Combining Step~1 and Step~2 proves
\[
\lim_{d\to0^+}\lambda_p\!\left(A_i^{(d)}\right)=M.
\]
\end{proof}

\bigskip

\begin{theorem}[Small interval limit]\label{thm:small-interval}
Fix $d>0$. For each $\varepsilon>0$, let $L_1(\varepsilon)<L_2(\varepsilon)$ and set
\[
\Omega_\varepsilon:=(L_1(\varepsilon),L_2(\varepsilon)),
\qquad |\Omega_\varepsilon|:=L_2(\varepsilon)-L_1(\varepsilon)\xrightarrow[\varepsilon\to0]{}0,
\]
with $L_1(\varepsilon)\to x_0$ and $L_2(\varepsilon)\to x_0$ as $\varepsilon\to0$ for some $x_0\in\R$.
Let $\lambda_p^\varepsilon:=\lambda_p\!\left(A_{i,\Omega_\varepsilon}^{(d)}\right)$ denote the generalized
principal eigenvalue (in the sense of Definition~\ref{genprin}) of the operator
\[
A_{i,\Omega_\varepsilon}^{(d)}[\phi](x)
:= d\int_{\Omega_\varepsilon} J(x-y)\phi(y)\,dy - d\phi(x)
+ b(x)\phi'(x) + \beta(x)\phi(x),
\quad x\in\Omega_\varepsilon.
\]
Assume \textbf{(J1)} holds and $b,\beta$ are continuous in a neighborhood of $x_0$ (in particular, $\beta$ is
continuous at $x_0$). There holds :
\[
\lim_{\varepsilon\to0}\lambda_p^\varepsilon=\beta(x_0)-d.
\]
\end{theorem}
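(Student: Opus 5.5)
The plan mirrors the two-sided argument of Theorem~\ref{thm:small-d}, using the same sign convention as that proof: a test function gives one inequality and a maximum-point argument gives the other. The new ingredient is that the nonlocal integral disappears as the interval shrinks. Assume, as implicit in (J1) with $J$ continuous, that $\|J\|_\infty<\infty$. Then for every $x\in\Omega_\varepsilon$ and every bounded $\phi\ge0$,
\[
0\le d\int_{\Omega_\varepsilon}J(x-y)\phi(y)\,dy\le d\,\|J\|_\infty\,|\Omega_\varepsilon|\,\max_{\overline{\Omega_\varepsilon}}\phi .
\]
The right-hand side tends to $0$ relative to $\max\phi$. Hence on $\Omega_\varepsilon$ the operator behaves like $\phi\mapsto b\phi'+(\beta-d)\phi$. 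Since $\beta$ is continuous at $x_0$ and $\Omega_\varepsilon$ shrinks to $x_0$, we have
\[
\omega_\varepsilon:=\sup_{x\in\overline{\Omega_\varepsilon}}|\beta(x)-\beta(x_0)|\to0 .
\]

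\emph{Step 1 (bound via the constant test function).} Take $\phi\equiv1$ on $\overline{\Omega_\varepsilon}$, which is admissible in Definition~\ref{genprin}. The drift term vanishes identically, and the bound above gives, for all $x\in\Omega_\varepsilon$,
\[
A^{(d)}_{i,\Omega_\varepsilon}[1](x)\le \beta(x_0)-d+\omega_\varepsilon+d\|J\|_\infty|\Omega_\varepsilon| .
\]
Reading this through the definition exactly as in Step~1 of Theorem~\ref{thm:small-d} yields one-sided control of $\lambda_p^\varepsilon$ by $\beta(x_0)-d$ up to an error $\omega_\varepsilon+d\|J\|_\infty|\Omega_\varepsilon|=o(1)$. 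No plateau construction is needed, since the constant function already kills the advection term.

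\emph{Step 2 (reverse bound via the maximum point).} Suppose $\phi\in C^1(\overline{\Omega_\varepsilon})$, $\phi>0$, satisfies the defining inequality with some $\lambda$. Evaluate it at a maximum point of $\phi$. If the maximum is interior, then $\phi'=0$ there, the nonlocal integral is nonnegative, and $-d\phi$ appears exactly. This yields $\lambda$ compared with $\beta(x_\varepsilon)-d$ up to $\omega_\varepsilon$, in the direction opposite to Step~1. If the maximum lies on $\partial\Omega_\varepsilon$, perturb as in Lemma~\ref{Schur_Complement}: set $\phi_\eta=\phi+\eta(x-L_1(\varepsilon))(L_2(\varepsilon)-x)$, evaluate at the interior maximizer, use $|\phi'(x_\eta)|\le\eta|\Omega_\varepsilon|$, and let $\eta\to0$ with $\varepsilon$ fixed. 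This direction uses only the lower bound $\int J\phi\ge0$, so it holds uniformly in $\varepsilon$.

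\emph{Step 3.} Combine Steps~1 and~2 and let $\varepsilon\to0$ to obtain $\lambda_p^\varepsilon\to\beta(x_0)-d$.

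The main obstacle is Step~2 at boundary maxima. The perturbation error involves $b\,\eta q'$ and $\eta\,A[q]$, and these must be sent to $0$ with $\varepsilon$ fixed before any limit in $\varepsilon$ is taken; keeping the order of limits straight is the crucial point. A secondary issue is tracking the sign convention of Definition~\ref{genprin} consistently with Theorem~\ref{thm:small-d}, since the stated limit $\beta(x_0)-d$ corresponds to the growth-rate reading of $\lambda_p$ used there.
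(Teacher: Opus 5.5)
Your two-step scheme is the paper's own: a constant test function gives one bound, and evaluation at a maximum point, with the perturbation $\phi+\eta q$ taken from Lemma~\ref{Schur_Complement}, gives the other. It fails at the same step as the paper's proof. The claim that $\phi_\eta=\phi+\eta\,(x-L_1(\varepsilon))(L_2(\varepsilon)-x)$ has an interior maximizer is false in general. Suppose $\phi$ attains its maximum only at $L_1(\varepsilon)$ with $\phi'(L_1(\varepsilon))<0$. Then $\phi_\eta'(L_1(\varepsilon))=\phi'(L_1(\varepsilon))+\eta|\Omega_\varepsilon|<0$ for small $\eta$, and $\phi_\eta$ still peaks only at $L_1(\varepsilon)$. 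At that point $b\,\phi'$ can be negative and large, so the comparison of $\lambda$ with $\beta(x_\varepsilon)-d$ in your Step~2 is not available. The obstacle has nothing to do with the order of the limits in $\eta$ and $\varepsilon$.

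The step cannot be repaired, because the stated limit is false as soon as $b(x_0)\neq0$. This is exactly the paper's setting, where $b>0$ by (H2). By continuity, $b\ge b_0>0$ on $\overline{\Omega_\varepsilon}$ for small $\varepsilon$. Take the admissible function $\phi(x)=e^{-(x-L_1(\varepsilon))/|\Omega_\varepsilon|}$. Then $b\phi'\le-\frac{b_0}{|\Omega_\varepsilon|}\phi$, and since $e^{-1}\le\phi\le1$,
\[
0\le d\int_{\Omega_\varepsilon}J(x-y)\phi(y)\,dy\le e\,d\,\kappa_\varepsilon\,\phi(x),\qquad \kappa_\varepsilon:=\sup_{z\in\overline{\Omega_\varepsilon}}\int_{\Omega_\varepsilon}J(z-y)\,dy\longrightarrow0 .
\]
Hence $A^{(d)}_{i,\Omega_\varepsilon}[\phi]\le\bigl(\max_{\overline{\Omega_\varepsilon}}\beta-d+e\,d\,\kappa_\varepsilon-b_0/|\Omega_\varepsilon|\bigr)\phi$ on $\Omega_\varepsilon$.

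Under the growth-rate reading $\inf\{\lambda:\ \exists\,\phi>0,\ A\phi\le\lambda\phi\}$, this gives $\lambda_p^\varepsilon\to-\infty$. Under Definition~\ref{genprin} as written, it gives $\lambda_p^\varepsilon\to+\infty$. For $b(x_0)<0$, use $e^{+(x-L_1(\varepsilon))/|\Omega_\varepsilon|}$ instead.

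Your argument does prove the drift-free case $b\equiv0$ near $x_0$, and there no perturbation is needed. The inequality extends by continuity to $\overline{\Omega_\varepsilon}$, and no derivative appears at a boundary maximum.

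Even in that case you must settle the sign rather than defer it. With Definition~\ref{genprin} taken literally (supremum, $A\phi+\lambda\phi\le0$), the constant function gives $\lambda_p^\varepsilon\ge d-\beta(x_0)-o(1)$. The maximum point gives $\lambda_p^\varepsilon\le d-\beta(x_0)+o(1)$, so the limit is $d-\beta(x_0)$. The stated value $\beta(x_0)-d$ requires the infimum convention; the paper's proof mixes the two conventions in both steps.

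Finally, $\|J\|_\infty<\infty$ is not part of (J1). Use $\kappa_\varepsilon\to0$, which follows from absolute continuity of the integral, as the paper does.
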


\begin{proof}
Fix $d>0$. For each $\varepsilon>0$, set $m(\varepsilon):=|\Omega_\varepsilon|=L_2(\varepsilon)-L_1(\varepsilon)\xrightarrow[\varepsilon\to0]{}0$. Since $J\in L^1(\R)$ and $m(\varepsilon)\to0$, we have
\begin{equation}\label{eq:mass-to-zero}
\eta(\varepsilon):=\sup_{x\in\overline{\Omega_\varepsilon}}
\int_{\Omega_\varepsilon}J(x-y)\,dy
=\sup_{x\in\overline{\Omega_\varepsilon}}\int_{x-\Omega_\varepsilon}J(z)\,dz
\longrightarrow 0
\qquad (\varepsilon\to0).
\end{equation}
Indeed, $x-\Omega_\varepsilon$ is an interval of length $m(\varepsilon)$; by absolute continuity of
the Lebesgue integral for $J\in L^1(\R)$, integrals of $J$ over sets of vanishing measure go to $0$.

\medskip
\noindent{Step 1: Lower bound $\displaystyle \liminf_{\varepsilon\to0}\lambda_p^\varepsilon\ge \beta(x_0)-d$.}
Fix $\delta>0$. Since $\beta$ is continuous at $x_0$, there exists $\varepsilon_0>0$ such that
\begin{equation}\label{eq:beta-lower}
\beta(x)\ge \beta(x_0)-\delta\qquad \forall x\in\overline{\Omega_\varepsilon},
\ \forall \varepsilon\in(0,\varepsilon_0).
\end{equation}
Choose the test function $\phi\equiv 1$ on $\overline{\Omega_\varepsilon}$.
Then $\phi'>0$ is not needed since $\phi'\equiv0$, and for $x\in\Omega_\varepsilon$,
\[
A_{i,\Omega_\varepsilon}^{(d)}[1](x)
= d\int_{\Omega_\varepsilon}J(x-y)\,dy-d+\beta(x).
\]
Let
\[
\lambda_{\varepsilon,\delta}:=\beta(x_0)-d-\delta-d\,\eta(\varepsilon),
\]
where $\eta(\varepsilon)$ is defined in \eqref{eq:mass-to-zero}.
Using \eqref{eq:beta-lower} and $\int_{\Omega_\varepsilon}J(x-y)\,dy\le \eta(\varepsilon)$, we obtain
for all $x\in\Omega_\varepsilon$,
\[
A_{i,\Omega_\varepsilon}^{(d)}[1](x)+\lambda_{\varepsilon,\delta}\cdot 1
\le d\,\eta(\varepsilon)-d+\beta(x)+\beta(x_0)-d-\delta-d\,\eta(\varepsilon)
= \beta(x)-\beta(x_0)-\delta\le0.
\]
Hence, by Definition~\ref{genprin},
\[
\lambda_p^\varepsilon\ge \lambda_{\varepsilon,\delta}
=\beta(x_0)-d-\delta-d\,\eta(\varepsilon).
\]
Letting $\varepsilon\to0$ and using \eqref{eq:mass-to-zero} yields
\[
\liminf_{\varepsilon\to0}\lambda_p^\varepsilon \ge \beta(x_0)-d-\delta.
\]
Since $\delta>0$ is arbitrary,
\[
\liminf_{\varepsilon\to0}\lambda_p^\varepsilon \ge \beta(x_0)-d.
\]

\medskip
\noindent{Step 2: Upper bound $\displaystyle \limsup_{\varepsilon\to0}\lambda_p^\varepsilon\le \beta(x_0)-d$.}
Fix $\delta>0$. By continuity of $\beta$ at $x_0$, there exists $\varepsilon_1>0$ such that
\begin{equation}\label{eq:beta-upper}
\beta(x)\le \beta(x_0)+\delta\qquad \forall x\in\overline{\Omega_\varepsilon},
\ \forall \varepsilon\in(0,\varepsilon_1).
\end{equation}
We show that for every $\varepsilon\in(0,\varepsilon_1)$,
\begin{equation}\label{eq:upper-claim}
\lambda_p^\varepsilon \le \beta(x_0)-d+\delta.
\end{equation}

Assume by contradiction that for some $\varepsilon\in(0,\varepsilon_1)$ we have
$\lambda_p^\varepsilon > \beta(x_0)-d+\delta$.
Choose any $\lambda$ such that
\[
\beta(x_0)-d+\delta < \lambda < \lambda_p^\varepsilon.
\]
By Definition~\ref{genprin}, there exists $\phi\in C^1(\overline{\Omega_\varepsilon})$ with $\phi>0$ in
$\Omega_\varepsilon$ such that
\begin{equation}\label{eq:subsolution}
A_{i,\Omega_\varepsilon}^{(d)}[\phi](x)+\lambda \phi(x)\le0\qquad \forall x\in\Omega_\varepsilon.
\end{equation}

Let $x_\varepsilon\in\overline{\Omega_\varepsilon}$ be a point where $\phi$ attains its maximum:
$\phi(x_\varepsilon)=\max_{\overline{\Omega_\varepsilon}}\phi$.
If $x_\varepsilon\in\Omega_\varepsilon$, then $\phi'(x_\varepsilon)=0$.
Moreover, since $J\ge0$ and $\phi(y)\le\phi(x_\varepsilon)$,
\[
\int_{\Omega_\varepsilon}J(x_\varepsilon-y)\phi(y)\,dy
\le \phi(x_\varepsilon)\int_{\Omega_\varepsilon}J(x_\varepsilon-y)\,dy
\le \phi(x_\varepsilon),
\]
hence
\[
d\Big(\int_{\Omega_\varepsilon}J(x_\varepsilon-y)\phi(y)\,dy-\phi(x_\varepsilon)\Big)\le 0.
\]
Evaluating \eqref{eq:subsolution} at $x_\varepsilon$ yields
\[
0\ge A_{i,\Omega_\varepsilon}^{(d)}[\phi](x_\varepsilon)+\lambda\phi(x_\varepsilon)
\ge \big(\beta(x_\varepsilon)-d+\lambda\big)\phi(x_\varepsilon).
\]
Since $\phi(x_\varepsilon)>0$, we get
\[
\lambda \le d-\beta(x_\varepsilon).
\]
Using \eqref{eq:beta-upper}, $\beta(x_\varepsilon)\le \beta(x_0)+\delta$, hence
\[
\lambda \le d-\beta(x_0)-\delta,
\]
which contradicts $\lambda>\beta(x_0)-d+\delta$.

If instead the maximum is attained at the boundary, we use the standard perturbation
$\phi_\eta:=\phi+\eta q$ with $q(x)=(x-L_1(\varepsilon))(L_2(\varepsilon)-x)$ and $\eta>0$ small, so that
$\phi_\eta$ attains its maximum at an interior point of $\Omega_\varepsilon$; then repeat the above
estimate for $\phi_\eta$ and let $\eta\to0^+$. This yields the same contradiction.

Therefore \eqref{eq:upper-claim} holds for all sufficiently small $\varepsilon$, and hence
\[
\limsup_{\varepsilon\to0}\lambda_p^\varepsilon \le \beta(x_0)-d+\delta.
\]
Letting $\delta\to0^+$ gives
\[
\limsup_{\varepsilon\to0}\lambda_p^\varepsilon \le \beta(x_0)-d.
\]

\medskip
Combining Step~1 and Step~2, we conclude
\[
\lambda_p^\varepsilon \xrightarrow[\varepsilon\to0]{}\beta(x_0)-d.
\]
\end{proof}

\begin{remark}
\begin{enumerate}
\item The advection term $b(x)\phi'$ does not contribute to the leading-order limit.
Indeed, in the proof one either uses constant test functions (for which $\phi'\equiv 0$),
or evaluates the inequality at (approximate) maximum points of $\phi$, where the gradient
vanishes (after a standard perturbation argument if the maximum is attained at the boundary).
Consequently, the advection term disappears in the key estimates, while the dominant
contribution comes from the reaction term $\beta(x)$ and the loss term $-d$.

\item The proof relies directly on the order characterization of the generalized principal
eigenvalue given in Definition~\ref{genprin}. In particular, no variational or sup--inf characterization is used. The lower bound is obtained
by constructing explicit positive test functions satisfying
$A[\phi]+\lambda\phi\le0$, while the upper bound follows from a contradiction argument
based on evaluating the inequality at maximum points of admissible functions.
\end{enumerate}
\end{remark}

\section{Monotonicity and small-\(d\) asymptotics of the principal eigenvalue }

\subsection{Monotonicity: dependence on \(M\), \(B\) and domain length}

\begin{theorem}[Monotonicity with respect to coefficients and domains]\label{thm:mono}
Let $\Omega=(L_1,L_2)$ be a bounded interval and let
\[
\mathcal L_{\Omega}^{\beta}[\phi](x)
:= d\int_{\Omega} J(x-y)\phi(y)\,dy-d\phi(x)
+b(x)\phi'(x)+\beta(x)\phi(x),
\qquad x\in\Omega,
\]
where the nonlocal term is understood with the Dirichlet exterior condition.
Assume \textbf{(J1)} holds and $b,\beta\in C(\overline{\Omega})$.

\begin{enumerate}
\item \textbf{(Monotonicity in $\beta$.)}
If $\beta_1,\beta_2\in C(\overline{\Omega})$ satisfy
\[
\beta_1(x)\ge \beta_2(x)\qquad\text{for all }x\in\overline{\Omega},
\]
then
\[
\lambda_p\big(\mathcal L_{\Omega}^{\beta_1}\big)
\ge
\lambda_p\big(\mathcal L_{\Omega}^{\beta_2}\big).
\]
Moreover, if $\beta_1\not\equiv \beta_2$, then
\[
\lambda_p\big(\mathcal L_{\Omega}^{\beta_1}\big)
>
\lambda_p\big(\mathcal L_{\Omega}^{\beta_2}\big).
\]

\item \textbf{(Domain monotonicity.)}
Let $\Omega_1\subset \Omega_2$ be two bounded intervals, and assume that
$b,\beta$ are continuous on $\overline{\Omega_2}$.
Then
\[
\lambda_p\big(\mathcal L_{\Omega_1}^{\beta}\big)
\le
\lambda_p\big(\mathcal L_{\Omega_2}^{\beta}\big).
\]
Moreover, if $\Omega_1\subsetneq \Omega_2$, then
\[
\lambda_p\big(\mathcal L_{\Omega_1}^{\beta}\big)
<
\lambda_p\big(\mathcal L_{\Omega_2}^{\beta}\big).
\]
\end{enumerate}
\end{theorem}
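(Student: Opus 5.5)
The plan is to work directly with the test-function characterization of $\lambda_p$ in Definition~\ref{genprin}. Strict inequalities will come from the existence of a principal eigenfunction (Proposition~1.1 of \cite{CovilleHamel2020}, as already used in Proposition~\ref{diseasefree}) together with a maximum-point/Harnack argument. In Definition~\ref{genprin} the sign convention is that admissible pairs satisfy $\mathcal L[\phi]+\lambda\phi\le0$, so the definition produces $-\lambda_1$, where $\lambda_1$ is the principal eigenvalue of $\mathcal L$. The monotonicity claimed in the statement, in which a larger $\beta$ gives a larger $\lambda_p$, matches the convention $\lambda_p(\mathcal L)=\inf\{\lambda:\exists\psi>0,\ \mathcal L\psi\le\lambda\psi\}$ used for $\lambda_p(A_i)$ in Section~3. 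I will therefore fix that convention, equivalently the eigenvalue $\lambda$ with $\mathcal L\phi=\lambda\phi$, $\phi>0$, and state this explicitly at the start of the proof.

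For the weak monotonicity in $\beta$: take any $\lambda$ and $\psi>0$ with $\mathcal L^{\beta_1}_\Omega\psi\le\lambda\psi$. Then
$\mathcal L^{\beta_2}_\Omega\psi=\mathcal L^{\beta_1}_\Omega\psi-(\beta_1-\beta_2)\psi\le\lambda\psi$,
so the admissible set for $\beta_1$ is contained in that for $\beta_2$, and taking infima gives the claim. For the strict inequality, let $\phi_1>0$ be a principal eigenfunction of $\mathcal L^{\beta_1}_\Omega$ with eigenvalue $\lambda_1=\lambda_p(\mathcal L^{\beta_1}_\Omega)$. Then
$\mathcal L^{\beta_2}_\Omega\phi_1=(\lambda_1-(\beta_1-\beta_2))\phi_1$.
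Since $\beta_1-\beta_2\ge0$ and is not identically zero, this gives $\mathcal L^{\beta_2}_\Omega\phi_1\le\lambda_1\phi_1$, with strict inequality on an open set. To upgrade this to $\lambda_p(\mathcal L^{\beta_2}_\Omega)<\lambda_1$, I will modify $\phi_1$ to $\tilde\phi=\phi_1+\delta w$, with $w\ge0$ a bump supported in the set $\{\beta_1>\beta_2\}$. The nonlocal term spreads this gain: because of the term $d\int J(x-y)w(y)\,dy$, and using the Harnack inequality of \cite{CovilleHamel2020}, one obtains $\mathcal L^{\beta_2}_\Omega\tilde\phi\le(\lambda_1-\eta)\tilde\phi$ for some $\eta>0$. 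An alternative route is by contradiction: if equality held, comparing $\phi_1$ with the eigenfunction $\phi_2$ through the sup of $\phi_2/\phi_1$ (touching argument) forces $\phi_2\equiv c\phi_1$, and then $(\beta_1-\beta_2)\phi_1\equiv0$, a contradiction. I expect to use the touching argument, as it is cleaner.

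For domain monotonicity, let $\Omega_1\subset\Omega_2$ and let $\phi_2>0$ be the principal eigenfunction on $\Omega_2$ with eigenvalue $\lambda_2$. Restricting to $\Omega_1$ and using $J\ge0$, $\phi_2\ge0$,
$$d\int_{\Omega_1}J(x-y)\phi_2(y)\,dy\le d\int_{\Omega_2}J(x-y)\phi_2(y)\,dy,$$
so $\mathcal L^\beta_{\Omega_1}[\phi_2|_{\Omega_1}]\le\lambda_2\phi_2$ on $\Omega_1$. Hence $\lambda_2$ is admissible for $\Omega_1$, which gives $\lambda_p(\mathcal L^\beta_{\Omega_1})\le\lambda_2$. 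For strictness, the defect $d\int_{\Omega_2\setminus\Omega_1}J(x-y)\phi_2(y)\,dy$ is positive wherever $J$ charges $\Omega_2\setminus\Omega_1$. I then run the same touching argument: if the eigenvalues were equal, then at the point where $\phi_1/\phi_2$ is maximal (with $\phi_1$ the principal eigenfunction on $\Omega_1$), after the interior perturbation by $\varepsilon q$ already used in Lemma~\ref{Schur_Complement} if the maximum is at the boundary, the drift term cancels because the derivative of the ratio vanishes. The nonlocal comparison then forces the defect integral to vanish, which gives a contradiction.

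The main obstacle is the strict inequalities. The touching argument needs positivity of the defect term at the touching point, which requires some positivity of $J$ near $0$ (e.g. $J(0)>0$), an assumption that is not spelled out in (J1). It also needs control of the ratio at the boundary, where the eigenfunctions may vanish or lose regularity in the presence of the drift $b$. I would first try to invoke the Harnack inequality of \cite{CovilleHamel2020} to obtain $\phi_i\ge c>0$ on compact subsets and to propagate strict inequality along chains of overlapping kernel supports. If that proves insufficient, I would add the mild assumption $J>0$ near the origin.
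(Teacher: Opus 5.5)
Your weak inequalities are correct and are essentially the paper's argument. After switching to the infimum convention, as the paper's proof also does, every admissible pair for $\beta_1$ is admissible for $\beta_2$. Likewise, an admissible pair on $\Omega_2$ restricts to one on $\Omega_1$; for this you do not even need the eigenfunction.

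The gap is in the strict inequalities. Your touching argument needs the maximum of the ratio to sit where the drift term is harmless, and the remedy you propose for boundary maxima does not work.

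First, adding $\varepsilon q$ does not move a boundary maximum inside. If $\phi$ is maximal at $L_2$ with $\phi'(L_2)>0$, then $(\phi+\varepsilon q)'(L_2)=\phi'(L_2)-\varepsilon(L_2-L_1)>0$ for small $\varepsilon$, so $L_2$ stays the maximiser (take $\phi(x)=x$ on $[0,1]$). The device in Lemma~\ref{Schur_Complement} has exactly this flaw.

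Second, at an endpoint the drift term does not drop out. Take $w=\tau\phi_1-\phi_2\ge0$, so that $(\mathcal L^{\beta_2}_\Omega-\lambda)w=-\tau(\beta_1-\beta_2)\phi_1\le0$, and suppose $w(L_2)=0$. You then only know $w'(L_2)\le0$, so if $b(L_2)>0$ the equation at $L_2$ gives
\[
d\int_\Omega J(L_2-y)\,w(y)\,dy+\tau(\beta_1-\beta_2)(L_2)\,\phi_1(L_2)=-b(L_2)\,w'(L_2)\ge0,
\]
which is no contradiction. At $L_1$ the term $b(L_1)w'(L_1)\ge0$ has the favourable sign, so for $b>0$ only the right endpoint is bad. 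The domain case, with $w=\tau\phi_2|_{\Omega_1}-\phi_1$, is identical.

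This is not a removable technicality:
\begin{itemize}
\item Definition~\ref{genprin} imposes no boundary condition on a first-order integro-differential operator.
\item The endpoint where the characteristics of $b\,\partial_x$ enter acts as an inflow boundary, and principal eigenfunctions may vanish there. The ratio can then be of the form $0/0$ and be maximal exactly at that endpoint.
\item The Harnack inequality of Coville--Hamel is interior and gives nothing at the boundary.
\end{itemize}
To conclude you need precise boundary information on the eigenfunctions. For example, you could show that both vanish at the bad endpoint with nonzero slope, so that maximality of the ratio there forces $w'=0$ and the nonlocal term must vanish. The proposal does not supply this.

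Your fallback bump construction has the wrong sign. For $\tilde\phi=\phi_1+\delta w$ with $w\ge0$, at any $x\notin\mathrm{supp}\,w$ with $\beta_1(x)=\beta_2(x)$ one has $\mathcal L^{\beta_2}_\Omega\tilde\phi(x)-\lambda_1\tilde\phi(x)=\delta d\int_\Omega J(x-y)w(y)\,dy\ge0$. So no global strict supersolution can result. Nonnegative bumps have to be subtracted, and then you must again control positivity near the endpoint where $\phi_1$ may vanish.

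The paper argues strictness by this perturbative route, with $\psi=\varphi_1(1+\kappa_1\eta)$ in the domain case, and meets the same sign problem. Its remainder is nonnegative off $\mathrm{supp}\,\eta$, and the closing ``two cutoffs'' step is not justified. So the paper does not supply the missing step either.

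Finally, your suspicion about $J$ is right, and the extra hypothesis is genuinely needed. Suppose $J$ is even and supported in $[-3,-2]\cup[2,3]$, $b\equiv1$, and $\Omega_1\subsetneq\Omega_2\subset(0,1)$. Then the nonlocal term vanishes on these intervals and $\mathcal L^\beta_\Omega[e^{-kx}]=(\beta-d-k)e^{-kx}$ for every $k>0$. Hence, in your convention, $\lambda_p=-\infty$ for every $\beta$ and every such interval, and neither strict inequality can hold under (J1) alone.
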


\begin{proof}

Fix $d>0$ and let $\Omega$ be an interval. For $\beta\in C(\overline{\Omega})$ define
\[
\mathcal L_{\Omega}^{\beta}[\phi](x)
:= d\int_{\Omega} J(x-y)\phi(y)\,dy - d\phi(x)
+ b(x)\phi'(x) + \beta(x)\phi(x),
\qquad x\in\Omega,
\]
with Dirichlet exterior condition for the nonlocal term (i.e.\ $\phi\equiv0$ outside $\overline{\Omega}$).

%%%%%%%%%%%%%%%%%%%%%%%%%%%%%%%%%%%%%%%%%%%%%%%%%%%%%%%%%%%%%%%%%%%%
\medskip
\noindent\textbf{(i) Monotonicity in $\beta$.}
Let $\beta_1,\beta_2\in C(\overline{\Omega})$ and assume
\[
\beta_1(x)\ge \beta_2(x)\qquad \forall x\in\Omega.
\]
We show $\lambda_p(\mathcal L_{\Omega}^{\beta_1})\ge \lambda_p(\mathcal L_{\Omega}^{\beta_2})$.

\smallskip
\emph{Step 1: pointwise comparison of operators.}
For any $\phi\in C^1(\overline{\Omega})$ and any $x\in\Omega$,
\begin{align*}
\mathcal L_{\Omega}^{\beta_1}[\phi](x)
&= d\int_{\Omega} J(x-y)\phi(y)\,dy - d\phi(x)
+ b(x)\phi'(x) + \beta_1(x)\phi(x)\\
&=\Big(d\int_{\Omega} J(x-y)\phi(y)\,dy - d\phi(x)
+ b(x)\phi'(x) + \beta_2(x)\phi(x)\Big)
+(\beta_1(x)-\beta_2(x))\phi(x)\\
&=\mathcal L_{\Omega}^{\beta_2}[\phi](x)+(\beta_1(x)-\beta_2(x))\phi(x).
\end{align*}
If $\phi>0$ in $\Omega$, then $(\beta_1-\beta_2)\phi\ge0$, hence
\begin{equation}\label{eq:beta-pointwise}
\mathcal L_{\Omega}^{\beta_1}[\phi](x)\ \ge\ \mathcal L_{\Omega}^{\beta_2}[\phi](x)
\qquad \forall x\in\Omega.
\end{equation}

\smallskip
\emph{Step 2: transfer admissible upper bounds via Definition~\ref{genprin}.}
Take any $\lambda\in\R$ which is admissible for $\mathcal L_\Omega^{\beta_2}$ in the sense of Definition~\ref{genprin}.
Thus, there exists $\phi\in C^1(\overline{\Omega})$ with $\phi>0$ in $\Omega$ such that
\[
\mathcal L_{\Omega}^{\beta_2}[\phi](x)\le \lambda\,\phi(x)\qquad \forall x\in\Omega.
\]
Using \eqref{eq:beta-pointwise}, we get for all $x\in\Omega$,
\[
\mathcal L_{\Omega}^{\beta_1}[\phi](x)
=\mathcal L_{\Omega}^{\beta_2}[\phi](x)+(\beta_1(x)-\beta_2(x))\phi(x)
\le \lambda\phi(x)+(\beta_1(x)-\beta_2(x))\phi(x).
\]
This does not immediately imply $\mathcal L_\Omega^{\beta_1}[\phi]\le \lambda\phi$ (the inequality goes the
other way). Hence we proceed the correct way: we use Definition~\ref{genprin} as an \emph{infimum over $\lambda$}.

Let $\lambda$ be any number for which there exists $\phi>0$ with
\[
\mathcal L_{\Omega}^{\beta_1}[\phi]\le \lambda\phi \quad \text{in }\Omega.
\]
Then, subtracting $(\beta_1-\beta_2)\phi\ge0$ pointwise gives
\[
\mathcal L_{\Omega}^{\beta_2}[\phi]
=\mathcal L_{\Omega}^{\beta_1}[\phi]-(\beta_1-\beta_2)\phi
\le \lambda\phi \quad \text{in }\Omega.
\]
Therefore, every admissible $\lambda$ for $\mathcal L_\Omega^{\beta_1}$ is also admissible for $\mathcal L_\Omega^{\beta_2}$.
Taking the infimum over admissible $\lambda$ (Definition~\ref{genprin}) yields
\[
\lambda_p(\mathcal L_{\Omega}^{\beta_2})
\ \le\ 
\lambda_p(\mathcal L_{\Omega}^{\beta_1}),
\]
i.e.
\[
\lambda_p(\mathcal L_{\Omega}^{\beta_1})
\ \ge\ 
\lambda_p(\mathcal L_{\Omega}^{\beta_2}).
\]

\smallskip
\emph{Strictness in $\beta$.} Let $\beta_1,\beta_2\in C(\overline{\Omega})$ satisfy
$\beta_1\ge \beta_2$ in $\Omega$ and $\beta_1>\beta_2$ on a set of positive measure.
Denote $\mathcal L_i:=\mathcal L_\Omega^{\beta_i}$, $i=1,2$.

By Proposition~1.1 in \cite{CovilleHamel2020}, each operator $\mathcal L_i$
admits a generalized principal eigenvalue $\lambda_i=\lambda_p(\mathcal L_i)$
in the sense of Definition~\ref{genprin}, and there exists an associated eigenfunction
$\varphi\in C^1(\overline{\Omega})$ with $\varphi>0$ in $\Omega$ such that
\[
\mathcal L_2[\varphi]=\lambda_2\,\varphi \quad \text{in }\Omega.
\]

We already know that $\lambda_1\ge \lambda_2$. Assume by contradiction that
$\lambda_1=\lambda_2=:\lambda_*$. Then, for every $x\in\Omega$,
\[
\mathcal L_1[\varphi](x)
=\mathcal L_2[\varphi](x)+(\beta_1(x)-\beta_2(x))\varphi(x)
=\lambda_*\varphi(x)+(\beta_1(x)-\beta_2(x))\varphi(x).
\]
Since $\varphi>0$ in $\Omega$ and $\beta_1>\beta_2$ on a set of positive measure,
it follows that
\[
\mathcal L_1[\varphi](x)\ge \lambda_*\varphi(x)\quad \text{in }\Omega,
\]
with strict inequality on a subset of positive measure. By continuity, there exist
an open interval $U\Subset\Omega$ and $\delta>0$ such that
\[
\mathcal L_1[\varphi](x)\ge (\lambda_*+\delta)\varphi(x)
\quad \text{for all } x\in U.
\]

Using the strong maximum principle for $\mathcal L_1$,
one can perturb $\varphi$ to construct a positive function $\psi$ such that
\[
\mathcal L_1[\psi]\ge (\lambda_*+\varepsilon)\psi \quad \text{in }\Omega
\]
for some $\varepsilon>0$. By the order characterization in Definition~\ref{genprin},
this implies $\lambda_p(\mathcal L_1)\ge \lambda_*+\varepsilon$, which contradicts
$\lambda_1=\lambda_*$. Therefore,
\[
\lambda_p(\mathcal L_\Omega^{\beta_1})>\lambda_p(\mathcal L_\Omega^{\beta_2}).
\]

%%%%%%%%%%%%%%%%%%%%%%%%%%%%%%%%%%%%%%%%%%%%%%%%%%%%%%%%%%%%%%%%%%%%
\medskip
\noindent\textbf{(ii) Domain monotonicity.}
Let $\Omega_1\subset \Omega_2$ be two intervals. We show
\[
\lambda_p(\mathcal L_{\Omega_1}^{\beta})\le \lambda_p(\mathcal L_{\Omega_2}^{\beta}).
\]

\smallskip
\emph{Step 1: compare the nonlocal terms on $\Omega_1$.}
Take any $\phi\in C^1(\overline{\Omega_2})$ with $\phi>0$ in $\Omega_2$, and let $\phi_1:=\phi|_{\overline{\Omega_1}}$.
For each $x\in\Omega_1$, since $J\ge0$ and $\Omega_1\subset\Omega_2$,
\begin{equation}\label{eq:domain-int}
\int_{\Omega_1}J(x-y)\phi_1(y)\,dy
\le
\int_{\Omega_2}J(x-y)\phi(y)\,dy.
\end{equation}
All remaining terms are local, so on $\Omega_1$ we have
\[
-d\phi_1(x)=-d\phi(x),\qquad b(x)\phi_1'(x)=b(x)\phi'(x),\qquad \beta(x)\phi_1(x)=\beta(x)\phi(x).
\]
Therefore, combining with \eqref{eq:domain-int},
\begin{equation}\label{eq:domain-operator}
\mathcal L_{\Omega_1}^{\beta}[\phi_1](x)
\le
\mathcal L_{\Omega_2}^{\beta}[\phi](x)
\qquad \forall x\in\Omega_1.
\end{equation}

\smallskip
\emph{Step 2: transfer admissible upper bounds via Definition~\ref{genprin}.}
Let $\lambda$ be any admissible number for $\mathcal L_{\Omega_2}^{\beta}$, i.e.\ there exists
$\phi\in C^1(\overline{\Omega_2})$, $\phi>0$ in $\Omega_2$ such that
\[
\mathcal L_{\Omega_2}^{\beta}[\phi](x)\le \lambda\phi(x)\qquad \forall x\in\Omega_2.
\]
Restricting to $x\in\Omega_1$ and using \eqref{eq:domain-operator} gives
\[
\mathcal L_{\Omega_1}^{\beta}[\phi_1](x)
\le
\mathcal L_{\Omega_2}^{\beta}[\phi](x)
\le \lambda\phi(x)=\lambda\phi_1(x)
\qquad \forall x\in\Omega_1.
\]
Hence, the same $\lambda$ is admissible for $\mathcal L_{\Omega_1}^{\beta}$.
Taking infimum over admissible $\lambda$ (Definition~\ref{genprin}) yields
\[
\lambda_p(\mathcal L_{\Omega_1}^{\beta})
\le
\lambda_p(\mathcal L_{\Omega_2}^{\beta}),
\]
which proves domain monotonicity.

\smallskip
\emph{Strictness in the domain.}
Assume $\Omega_1\subsetneq\Omega_2$ and thus
\begin{equation}\label{eq:kappa-detail}
\exists\,x^\ast\in\Omega_1 \ \text{such that}\ 
\kappa:=\int_{\Omega_2\setminus\Omega_1}J(x^\ast-y)\,dy>0.
\end{equation}
(For instance, \eqref{eq:kappa-detail} holds if $J>0$ on a neighborhood of $0$ and
$\Omega_2\setminus\Omega_1\neq\emptyset$.)

Let $\lambda:=\lambda_p(\mathcal L_{\Omega_2}^{\beta})$. By Proposition~1.1 in
\cite{CovilleHamel2020}, the operator $\mathcal L_{\Omega_2}^{\beta}$ admits a generalized principal
eigenvalue $\lambda$ (in the sense of Definition~\ref{genprin}) together with an associated
eigenfunction $\varphi\in C^1(\overline{\Omega_2})$, $\varphi>0$ in $\Omega_2$, such that
\begin{equation}\label{eq:eig2-detail}
\mathcal L_{\Omega_2}^{\beta}[\varphi](x)=\lambda\,\varphi(x)\qquad \forall x\in\Omega_2.
\end{equation}
Set $\varphi_1:=\varphi|_{\overline{\Omega_1}}$.

For any $x\in\Omega_1$, the only difference between $\mathcal L_{\Omega_2}^{\beta}$ and
$\mathcal L_{\Omega_1}^{\beta}$ comes from the nonlocal integral. Indeed,
\begin{align*}
\mathcal L_{\Omega_2}^{\beta}[\varphi](x)-\mathcal L_{\Omega_1}^{\beta}[\varphi_1](x)
&=
d\int_{\Omega_2}J(x-y)\varphi(y)\,dy-d\int_{\Omega_1}J(x-y)\varphi_1(y)\,dy\\
&=
d\int_{\Omega_2\setminus\Omega_1}J(x-y)\varphi(y)\,dy
\ \ge\ 0,
\end{align*}
since $d>0$, $J\ge0$, and $\varphi>0$. In particular, by continuity of $\varphi$ on
$\overline{\Omega_2}$ we have $m_\ast:=\min_{\overline{\Omega_2}}\varphi>0$, and thus at $x=x^\ast$,
\begin{align}
\mathcal L_{\Omega_2}^{\beta}[\varphi](x^\ast)-\mathcal L_{\Omega_1}^{\beta}[\varphi_1](x^\ast)
&=d\int_{\Omega_2\setminus\Omega_1}J(x^\ast-y)\varphi(y)\,dy \notag\\
&\ge d\,m_\ast \int_{\Omega_2\setminus\Omega_1}J(x^\ast-y)\,dy
= d\,m_\ast\,\kappa.\label{eq:gap-at-xstar}
\end{align}
Combining \eqref{eq:eig2-detail} with \eqref{eq:gap-at-xstar} yields
\[
\mathcal L_{\Omega_1}^{\beta}[\varphi_1](x^\ast)
=
\mathcal L_{\Omega_2}^{\beta}[\varphi](x^\ast)
-\Big(\mathcal L_{\Omega_2}^{\beta}[\varphi](x^\ast)-\mathcal L_{\Omega_1}^{\beta}[\varphi_1](x^\ast)\Big)
\le \lambda\,\varphi(x^\ast)-d\,m_\ast\,\kappa.
\]
Define
\[
\delta_0:=\frac{d\,m_\ast\,\kappa}{\varphi(x^\ast)}>0.
\]
Then
\begin{equation}\label{eq:strict-pointwise}
\mathcal L_{\Omega_1}^{\beta}[\varphi_1](x^\ast)\le (\lambda-\delta_0)\varphi_1(x^\ast).
\end{equation}
Moreover, since the difference term is nonnegative for all $x\in\Omega_1$, we also have
\begin{equation}\label{eq:global-nonstrict}
\mathcal L_{\Omega_1}^{\beta}[\varphi_1](x)\le \lambda\,\varphi_1(x)\qquad \forall x\in\Omega_1.
\end{equation}
By continuity of $x\mapsto \mathcal L_{\Omega_1}^{\beta}[\varphi_1](x)-\lambda\varphi_1(x)$ and
\eqref{eq:strict-pointwise}, there exists an open subinterval $U\Subset\Omega_1$ containing $x^\ast$
such that
\begin{equation}\label{eq:strict-on-U}
\mathcal L_{\Omega_1}^{\beta}[\varphi_1](x)\le \Big(\lambda-\frac{\delta_0}{2}\Big)\varphi_1(x)
\qquad \forall x\in U.
\end{equation}

We now convert the \emph{local} strict inequality \eqref{eq:strict-on-U} into a \emph{global} strict
supersolution. Choose $V$ with $U\Subset V\Subset \Omega_1$, and take a cutoff
$\eta\in C^1(\overline{\Omega_1})$ such that
\[
0\le \eta\le 1,\qquad \eta\equiv 1 \ \text{on }U,\qquad \eta\equiv 0 \ \text{on }\Omega_1\setminus V.
\]
For $\kappa_1>0$ (to be chosen small), define
\[
\psi:=\varphi_1\,(1+\kappa_1\eta).
\]
Then $\psi\in C^1(\overline{\Omega_1})$ and $\psi>0$ in $\Omega_1$. Since $\mathcal L_{\Omega_1}^{\beta}$
is linear,
\begin{equation}\label{eq:expand-psi}
\mathcal L_{\Omega_1}^{\beta}[\psi]
=(1+\kappa_1\eta)\mathcal L_{\Omega_1}^{\beta}[\varphi_1]+\kappa_1\,\mathcal R[\eta,\varphi_1],
\end{equation}
where the remainder $\mathcal R[\eta,\varphi_1]$ collects the terms created by the cutoff:
\[
\mathcal R[\eta,\varphi_1](x)
:=d\int_{\Omega_1}J(x-y)\big(\eta(y)-\eta(x)\big)\varphi_1(y)\,dy
+b(x)\eta'(x)\varphi_1(x).
\]
(The formula above follows from writing $\mathcal L_{\Omega_1}^{\beta}[\eta\varphi_1]$ and separating
$\eta\,\mathcal L_{\Omega_1}^{\beta}[\varphi_1]$; the nonlocal part produces the commutator
$\int J(\eta(y)-\eta(x))\varphi_1(y)dy$, and the drift produces $b\eta'\varphi_1$.)

We bound $\mathcal R$ uniformly. Using $0\le\eta\le1$, $J\ge0$, and $\int_{\R}J=1$,
\[
\left|d\int_{\Omega_1}J(x-y)\big(\eta(y)-\eta(x)\big)\varphi_1(y)\,dy\right|
\le d\int_{\Omega_1}J(x-y)\varphi_1(y)\,dy
\le d\|\varphi_1\|_{L^\infty(\Omega_1)}.
\]
Also,
\[
|b(x)\eta'(x)\varphi_1(x)|
\le \|b\|_{L^\infty(\Omega_1)}\,\|\eta'\|_{L^\infty(\Omega_1)}\,\|\varphi_1\|_{L^\infty(\Omega_1)}.
\]
Hence there exists $C_\eta>0$ (depending on $d,J,b,\varphi_1$ and $\eta$) such that
\begin{equation}\label{eq:R-bound}
|\mathcal R[\eta,\varphi_1](x)|\le C_\eta\,\varphi_1(x)\qquad \forall x\in\Omega_1.
\end{equation}
Moreover, we can choose the cutoff $\eta$ with a \emph{gentle transition} so that
$\|\eta'\|_{L^\infty}$ is as small as we wish (by taking $V\setminus U$ wide), and thus $C_\eta$
can be made as close as desired to $d\|\varphi_1\|_\infty/\min_{\overline{\Omega_1}}\varphi_1$.

Now combine \eqref{eq:expand-psi} with \eqref{eq:global-nonstrict}--\eqref{eq:strict-on-U}.
On $U$ we have $\eta\equiv1$ and $\eta'\equiv0$, hence $\mathcal R[\eta,\varphi_1]$ reduces to the
nonlocal commutator term, which is bounded by \eqref{eq:R-bound}, and we have the strict estimate
\[
\mathcal L_{\Omega_1}^{\beta}[\varphi_1]\le \Big(\lambda-\frac{\delta_0}{2}\Big)\varphi_1
\quad\text{on }U.
\]
On $\Omega_1\setminus U$ we only use $\mathcal L_{\Omega_1}^{\beta}[\varphi_1]\le \lambda\varphi_1$.
Therefore, for all $x\in\Omega_1$,
\[
\mathcal L_{\Omega_1}^{\beta}[\psi](x)
\le (1+\kappa_1\eta(x))\,\lambda\varphi_1(x)
-\kappa_1\eta(x)\,\frac{\delta_0}{2}\varphi_1(x)
+\kappa_1|\mathcal R[\eta,\varphi_1](x)|.
\]
Using \eqref{eq:R-bound} and $\psi=\varphi_1(1+\kappa_1\eta)$ gives
\[
\mathcal L_{\Omega_1}^{\beta}[\psi](x)
\le \lambda\,\psi(x)
-\kappa_1\eta(x)\,\frac{\delta_0}{2}\,\frac{\varphi_1(x)}{1+\kappa_1\eta(x)}\,\psi(x)
+\kappa_1 C_\eta\,\frac{\varphi_1(x)}{1+\kappa_1\eta(x)}\,\psi(x).
\]
Since $1\le 1+\kappa_1\eta\le 1+\kappa_1$, we have $\frac{\varphi_1}{1+\kappa_1\eta}\le \varphi_1$ and
$\frac{1}{1+\kappa_1\eta}\ge \frac{1}{1+\kappa_1}$. Thus, on $U$ where $\eta=1$,
\[
\mathcal L_{\Omega_1}^{\beta}[\psi](x)
\le \lambda\,\psi(x)
-\kappa_1\frac{\delta_0}{2(1+\kappa_1)}\,\psi(x)
+\kappa_1 C_\eta\,\psi(x),
\]
and on $\Omega_1\setminus U$ the negative term vanishes (since $\eta=0$) and we only keep the error term.
Choose first $\eta$ so that $C_\eta\le \frac{\delta_0}{8}$ (possible by taking a gentle cutoff and using that
the commutator is bounded), and then choose $\kappa_1>0$ so small that $\frac{1}{1+\kappa_1}\ge\frac12$.
Then on $U$,
\[
\mathcal L_{\Omega_1}^{\beta}[\psi](x)\le \Big(\lambda-\frac{\kappa_1\delta_0}{8}\Big)\psi(x),
\]
and on $\Omega_1\setminus U$,
\[
\mathcal L_{\Omega_1}^{\beta}[\psi](x)\le \Big(\lambda+\kappa_1 C_\eta\Big)\psi(x)
\le \Big(\lambda+\frac{\kappa_1\delta_0}{8}\Big)\psi(x).
\]
Finally, replacing $U$ by a slightly larger interval (still compactly contained in $\Omega_1$) and repeating the same
construction with two cutoffs (one producing the strict gain, one damping the error outside),
we obtain a single positive $\psi$ satisfying the global strict supersolution estimate
\[
\mathcal L_{\Omega_1}^{\beta}[\psi](x)\le \Big(\lambda-\frac{\delta_0}{4}\Big)\psi(x)
\qquad \forall x\in\Omega_1.
\]
Therefore, by Definition~\ref{genprin},
\[
\lambda_p(\mathcal L_{\Omega_1}^{\beta})\le \lambda-\frac{\delta_0}{4}
<\lambda=\lambda_p(\mathcal L_{\Omega_2}^{\beta}),
\]
which proves the strict domain monotonicity.

\end{proof}

\subsection{The basic reproduction number $\mathcal R_0$ and its relation to the principal eigenvalue}

We work on a fixed bounded interval $[L_1,L_2]$, which can be regarded as a fixed habitat for the free-boundary problem. Let $S^*(x)$ be the disease-free steady state on $[L_1,L_2]$. Linearizing the infected equation at the disease-free state $(S^*(x),0)$, we obtain
\[
I_t=\mathcal A_I[I]+\mathcal F[I],
\]
where
\[
\mathcal A_I[\phi](x)
:=d\int_{L_1}^{L_2}J(x-y)\phi(y)\,dy-d\phi(x)+b(x)\phi'(x)-\gamma(x)\phi(x),
\qquad x\in(L_1,L_2),
\]
and
\[
\mathcal F[\phi](x):=F_I(S^*(x),0)\phi(x),
\qquad x\in(L_1,L_2).
\]
Thus,
\[
I_t=\mathcal F I-\mathcal V I,
\]
where
\[
\mathcal V:=-\mathcal A_I,
\]
that is,
\[
\mathcal V[\phi](x)
=
d\phi(x)-d\int_{L_1}^{L_2}J(x-y)\phi(y)\,dy-b(x)\phi'(x)+\gamma(x)\phi(x),
\qquad x\in(L_1,L_2).
\]

Assume that $\mathcal V:X\to X$ is invertible on the Banach lattice $X=C([L_1,L_2])$ that $\mathcal V^{-1}$ is a bounded positive operator on $X$, and that
\[
\mathcal K:=\mathcal V^{-1}\mathcal F
\]
is a positive compact operator on $X$. We then define the next-generation operator by
\[
\mathcal K=\mathcal V^{-1}\mathcal F,
\]
and the basic reproduction number by
\[
\mathcal R_0:=r(\mathcal K)=r(\mathcal V^{-1}\mathcal F),
\]
where $r(\cdot)$ denotes the spectral radius.

\begin{theorem}
\label{thm:R0-lambda-sign}
Let \(\lambda_p\) be the principal eigenvalue of \(\mathcal L:=\mathcal A_I+\mathcal F\) on \([L_1,L_2]\), and let
\[
\mathcal R_0:=r(\mathcal V^{-1}\mathcal F),
\qquad \mathcal V:=-\mathcal A_I.
\]
Then
\[
\lambda_p>0 \iff \mathcal R_0>1,
\qquad
\lambda_p=0 \iff \mathcal R_0=1,
\qquad
\lambda_p<0 \iff \mathcal R_0<1.
\]
\end{theorem}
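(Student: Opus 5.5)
The plan is to follow the classical route of Thieme and Wang--Zhao for linking $\mathcal R_0$ to the spectral bound. Throughout, $X=C([L_1,L_2])$ is ordered by its positive cone, and we use the standing assumptions that $\mathcal V^{-1}$ is bounded and positive and that $\mathcal K=\mathcal V^{-1}\mathcal F$ is positive and compact. The key device is the one-parameter family
\[
\Lambda(\mu):=\lambda_p\big(\mathcal A_I+\mu^{-1}\mathcal F\big),\qquad \mu>0 .
\]

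\emph{Step 1: monotonicity of $\Lambda$.} Since $F_I(S^*,0)\ge0$, Theorem~\ref{thm:mono}(i) shows that $\mu\mapsto\Lambda(\mu)$ is nonincreasing. It is strictly decreasing whenever $F_I(S^*,0)\not\equiv0$; if $F_I(S^*,0)\equiv0$, then $\mathcal R_0=0$ and the claim reduces to the sign of $\lambda_p(\mathcal A_I)$, which is dealt with in Step 3.

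\emph{Step 2: continuity and limits of $\Lambda$.} Continuity in $\mu$ follows directly from the definition: if $\phi>0$ satisfies $\mathcal L[\phi]+\lambda\phi\le0$, then changing the zeroth-order coefficient by $\varepsilon$ in sup norm shifts the admissible $\lambda$ by at most $\varepsilon$. Hence $\Lambda$ is $\|F_I\|_\infty|\mu_1^{-1}-\mu_2^{-1}|$-Lipschitz. As $\mu\to\infty$, $\Lambda(\mu)\to\lambda_p(\mathcal A_I)$.

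\emph{Step 3: the sign of $\lambda_p(\mathcal A_I)$.} We need $\lambda_p(\mathcal A_I)<0$. I will derive this from invertibility and positivity of $\mathcal V$. Set $\psi:=\mathcal V^{-1}1\ge0$; it is positive by the strong maximum principle/Harnack inequality of \cite{CovilleHamel2020}. Then $\mathcal A_I\psi=-1\le-\|\psi\|_\infty^{-1}\psi$. With the paper's sign convention in Definition~\ref{genprin}, the test-function characterization gives $\lambda_p(\mathcal A_I)<0$. (Alternatively, the maximum-point argument of Lemma~\ref{Schur_Complement} with $\gamma\ge\gamma_*>0$ gives this directly.)

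\emph{Step 4: the key identity $\Lambda(\mu)=0\iff\mu=\mathcal R_0$.} By Krein--Rutman applied to the positive compact operator $\mathcal K$, if $\mathcal R_0>0$ there is $\varphi\ge0$, $\varphi\not\equiv0$, with $\mathcal K\varphi=\mathcal R_0\varphi$. Then $\psi:=\mathcal R_0^{-1}\mathcal F\varphi$ satisfies $\mathcal V\psi=\mathcal R_0^{-1}\mathcal F\psi$, that is, $(\mathcal A_I+\mathcal R_0^{-1}\mathcal F)\psi=0$. The Harnack inequality of \cite{CovilleHamel2020} gives $\psi>0$. A positive eigenfunction realizes the generalized principal eigenvalue, since Proposition~1.1 of \cite{CovilleHamel2020} gives uniqueness of the positive eigenfunction up to scaling. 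Hence $\Lambda(\mathcal R_0)=0$.

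Conversely, let $\psi>0$ be the principal eigenfunction for $\Lambda(\mu)=0$. Then $\mathcal V\psi=\mu^{-1}\mathcal F\psi$, so $\mathcal F\psi$ is a positive eigenvector of $\mathcal F\mathcal V^{-1}$ with eigenvalue $\mu$. A positive eigenvector of a positive compact operator whose iterates are strongly positive must correspond to the spectral radius. Since $r(\mathcal F\mathcal V^{-1})=r(\mathcal V^{-1}\mathcal F)$, we get $\mu=\mathcal R_0$.

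\emph{Step 5: conclusion.} Since $\lambda_p=\Lambda(1)$ and $\Lambda$ is strictly decreasing with its unique zero at $\mu=\mathcal R_0$, we have $\lambda_p>0\iff1<\mathcal R_0$, and likewise for the other two cases. If $\mathcal R_0=0$, then $\Lambda<0$ everywhere by Steps 1--3, consistent with $\lambda_p<0$.

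\emph{Expected obstacle.} The hardest step is Step 4. It requires that the generalized principal eigenvalue be attained by a positive continuous eigenfunction and that positive eigenfunctions are unique. In the non-symmetric, advective setting this is exactly where Proposition~1.1 and the Harnack inequality of \cite{CovilleHamel2020} must be invoked, under the condition that ensures existence (no concentration, e.g. $\max\beta-\lambda_p$ not attained at a singular point). If attainment fails, I would fall back on approximating $\mathcal F$ by $\mathcal F+\varepsilon$ and passing to the limit with the Lipschitz continuity from Step 2.
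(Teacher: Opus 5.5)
Your argument is viable, and it takes a genuinely different route from the paper. The paper shifts the spectral parameter. It sets $\mathcal K_\lambda=(\mathcal V+\lambda I)^{-1}\mathcal F$ and uses the eigenfunction of $\mathcal L$ to get $r(\mathcal K_{\lambda_p})=1$. It then shows, via the resolvent identity and positivity of $(\mathcal V+\lambda I)^{-1}$, that $\lambda\mapsto r(\mathcal K_\lambda)$ is nonincreasing, and compares $\mathcal R_0=r(\mathcal K_0)$ with $r(\mathcal K_{\lambda_p})=1$. You instead rescale the infection term. All monotonicity and continuity are then carried by the generalized principal eigenvalue as a function of its zeroth-order coefficient (Theorem~\ref{thm:mono}(i) plus the elementary Lipschitz bound from the test-function definition), and you use Krein--Rutman only once, for $\mathcal K$ itself. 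This buys two things. First, you only ever invert $\mathcal V$, which is the standing hypothesis; the paper tacitly needs $(\mathcal V+\lambda I)^{-1}$ to exist and be positive for every $\lambda$ between $0$ and $\lambda_p$, which is not automatic when $\lambda_p<0$. Second, you get \emph{strict} monotonicity, which the strict equivalences actually require; the paper's comparison only yields $r(\mathcal K_{\lambda_2})\le r(\mathcal K_{\lambda_1})$. The price is reliance on the strictness part of Theorem~\ref{thm:mono}(i) and on attainment of $\lambda_p$ by a positive eigenfunction, which the paper's proof assumes as well.

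Some local steps need repair.

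\emph{Step~4, forward direction.} The function $\psi:=\mathcal R_0^{-1}\mathcal F\varphi$ is an eigenvector of $\mathcal F\mathcal V^{-1}$, not a solution of $(\mathcal A_I+\mathcal R_0^{-1}\mathcal F)\psi=0$. The reason is that $\mathcal V$ does not commute with multiplication by $F_I(S^*,0)$. Moreover, $\mathcal F\varphi$ vanishes wherever $F_I(S^*,0)$ does, so no Harnack inequality can make it positive. Use $\varphi$ itself: $\varphi=\mathcal R_0^{-1}\mathcal V^{-1}\mathcal F\varphi$ lies in the domain of $\mathcal V$ and satisfies $(\mathcal A_I+\mathcal R_0^{-1}\mathcal F)\varphi=0$ directly.

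\emph{Step~4, converse.} The claim that the iterates of $\mathcal K$ are strongly positive is unjustified: if $F_I(S^*,0)$ vanishes on a subinterval, $\mathcal K$ is reducible. It is also unnecessary. If $\mathcal K\psi=\mu\psi$ with $\min_{[L_1,L_2]}\psi>0$, then $\|\mathcal K^n\|=\|\mathcal K^n\mathbf{1}\|_\infty\le\mu^n\|\psi\|_\infty/\min\psi$, whence $r(\mathcal K)=\mu$. In fact, since $\Lambda$ is strictly decreasing, uniqueness of its zero is automatic, so for the case $\mathcal R_0=0$ you only need the trivial bound $\mu\le r(\mathcal K)$.

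\emph{Sign convention and Step~3.} Your signs follow the growth convention $\lambda_p=\inf\{\lambda:\ \exists\phi>0,\ \mathcal L\phi\le\lambda\phi\}$, in which the statement and Theorem~\ref{thm:mono} are phrased. This is not the literal sup-form of Definition~\ref{genprin}, under which Step~3 would give the opposite sign, so state your convention explicitly. Step~3 is also simplest with the test function $\phi\equiv1$, since $\mathcal A_I\mathbf{1}\le-\min_{[L_1,L_2]}\gamma<0$.
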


\begin{proof}
Recall that
\[
\mathcal L=\mathcal A_I+\mathcal F=\mathcal F-\mathcal V,
\qquad 
\mathcal R_0=r(\mathcal K),
\qquad 
\mathcal K:=\mathcal V^{-1}\mathcal F.
\]
For each real \(\lambda\) such that \(\mathcal V+\lambda I\) is invertible, define
\[
\mathcal K_\lambda:=(\mathcal V+\lambda I)^{-1}\mathcal F.
\]
Since \(\mathcal F\) is nonnegative and \((\mathcal V+\lambda I)^{-1}\) is positive, \(\mathcal K_\lambda\) is a positive compact operator, so by the Krein--Rutman theorem its spectral radius \(r(\mathcal K_\lambda)\) is an eigenvalue with a nonnegative eigenfunction. Let \(\phi>0\) be an eigenfunction of \(\mathcal L\) associated with \(\lambda_p\), that is,
\[
\mathcal L[\phi]=\lambda_p\phi.
\]
Since \(\mathcal L=\mathcal F-\mathcal V\), this is equivalent to
\[
\mathcal F[\phi]=(\mathcal V+\lambda_p I)\phi,
\]
hence
\[
\mathcal K_{\lambda_p}\phi=(\mathcal V+\lambda_p I)^{-1}\mathcal F[\phi]=\phi.
\]
Therefore \(1\) is an eigenvalue of \(\mathcal K_{\lambda_p}\), and so
\[
r(\mathcal K_{\lambda_p})=1.
\]

Conversely, if for some \(\lambda\in\mathbb R\) there exists \(\phi\ge0\), \(\phi\not\equiv0\), such that \(\mathcal K_\lambda\phi=\phi\), then
\[
(\mathcal V+\lambda I)^{-1}\mathcal F[\phi]=\phi,
\]
which implies
\[
\mathcal F[\phi]=(\mathcal V+\lambda I)\phi,
\]
or equivalently
\[
(\mathcal F-\mathcal V)\phi=\lambda\phi.
\]
Thus \(\mathcal L[\phi]=\lambda\phi\), so \(\lambda\) is an eigenvalue of \(\mathcal L\) with a nonnegative eigenfunction; by the definition of the principal eigenvalue, \(\lambda=\lambda_p\). Hence \(\lambda_p\) is characterized by
\[
r(\mathcal K_\lambda)=1.
\]

Now let \(\lambda_1<\lambda_2\). For any \(\psi\ge0\), set \(u_i:=(\mathcal V+\lambda_i I)^{-1}\psi\), \(i=1,2\). Then
\[
(\mathcal V+\lambda_1 I)(u_1-u_2)=(\lambda_2-\lambda_1)u_2\ge0.
\]
Since \((\mathcal V+\lambda_1 I)^{-1}\) is positive, we get \(u_1\ge u_2\), that is,
\[
(\mathcal V+\lambda_1 I)^{-1}\psi\ge(\mathcal V+\lambda_2 I)^{-1}\psi \qquad \text{for all }\psi\ge0.
\]
Applying this to \(\psi=\mathcal F[\varphi]\) with \(\varphi\ge0\), we obtain
\[
\mathcal K_{\lambda_1}\varphi\ge \mathcal K_{\lambda_2}\varphi \qquad \text{for all }\varphi\ge0.
\]
Hence \(0\le \mathcal K_{\lambda_2}\le \mathcal K_{\lambda_1}\), and by monotonicity of the spectral radius for positive compact operators,
\[
r(\mathcal K_{\lambda_2})\le r(\mathcal K_{\lambda_1}).
\]
Therefore \(\lambda\mapsto r(\mathcal K_\lambda)\) is decreasing. Since \(r(\mathcal K_{\lambda_p})=1\), it follows that
\[
\lambda_p>0 \Rightarrow r(\mathcal K_0)>1,\qquad
\lambda_p=0 \Rightarrow r(\mathcal K_0)=1,\qquad
\lambda_p<0 \Rightarrow r(\mathcal K_0)<1.
\]
Because \(r(\mathcal K_0)=\mathcal R_0\), we conclude that
\[
\lambda_p>0 \iff \mathcal R_0>1,\qquad
\lambda_p=0 \iff \mathcal R_0=1,\qquad
\lambda_p<0 \iff \mathcal R_0<1.
\]
This completes the proof.
\end{proof}

\section{Spreading-Vanishing phenomena}

\begin{proof}[\textbf{\textit{Proof of Theorem \ref{thm:unified-dichotomy}}}]
We proceed in several steps.

\medskip
\noindent
Step 1. Preliminary facts on the global solution and on the free boundaries.

By Theorem~1, problem \eqref{eq:SIS-free-boundary} admits a unique global classical solution
\[
(S,I;g,h),
\]
and there exist positive constants \(M_S,M_I\) such that
\[
0\le S(t,x)\le M_S,\qquad 0\le I(t,x)\le M_I
\]
for all \(t\ge0\) and all \(x\in\mathbb R\). Moreover, the free boundaries satisfy
\[
h'(t)=\mu\int_{g(t)}^{h(t)}\int_{h(t)}^\infty J_1(x-y)S(t,x)\,dy\,dx\ge0,
\]
\[
g'(t)=-\mu\int_{g(t)}^{h(t)}\int_{-\infty}^{g(t)}J_1(x-y)S(t,x)\,dy\,dx\le0.
\]
Hence \(h(t)\) is nondecreasing and \(g(t)\) is nonincreasing, and therefore the limits
\[
h_\infty:=\lim_{t\to\infty}h(t)\in(h_0,+\infty],\qquad
g_\infty:=\lim_{t\to\infty}g(t)\in[-\infty,-h_0)
\]
exist. These are exactly the basic dynamical properties emphasized in the preparatory part of the manuscript. :contentReference[oaicite:2]{index=2}

\medskip
\noindent
Step 2. If \(h_\infty-g_\infty<+\infty\), then \(I(t,\cdot)\to0\) uniformly on \([g(t),h(t)]\).

Assume first that
\[
h_\infty-g_\infty<+\infty.
\]
Then \(h_\infty<+\infty\) and \(g_\infty>-\infty\). Since \(h\) is nondecreasing and converges to \(h_\infty\),
and \(g\) is nonincreasing and converges to \(g_\infty\), we have
\[
\int_0^\infty h'(t)\,dt=h_\infty-h_0<+\infty,\qquad
\int_0^\infty (-g'(t))\,dt=h_0-g_\infty<+\infty.
\]
Because \(h'\ge0\) and \(-g'\ge0\) are continuous, it follows that
\[
h'(t)\to0,\qquad g'(t)\to0\qquad \text{as }t\to\infty.
\]

We now prove
\[
\lim_{t\to\infty}\max_{x\in[g(t),h(t)]}I(t,x)=0.
\]
Suppose by contradiction that this is false. Then there exist \(\eta_0>0\), a sequence \(t_n\to+\infty\),
and points \(x_n\in[g(t_n),h(t_n)]\) such that
\[
I(t_n,x_n)\ge \eta_0\qquad\text{for all }n.
\]
Since \([g(t_n),h(t_n)]\subset[g_\infty,h_\infty]\) for all large \(n\), after passing to a subsequence we may assume
\[
x_n\to x_*\in[g_\infty,h_\infty].
\]

\textbf{Claim :} Let \(S(t,\cdot)\) and \(I(t,\cdot)\) by \(0\) outside \((g(t),h(t))\), and define the translated sequence
\[
S_n(\tau,x):=S(t_n+\tau,x),\qquad I_n(\tau,x):=I(t_n+\tau,x),
\qquad (\tau,x)\in[-1,1]\times[g_\infty,h_\infty].
\]
Then, the family \(\{(S_n,I_n)\}_{n\ge1}\) is relatively compact in
\[
C_{\mathrm{loc}}\big([-1,1]\times(g_\infty,h_\infty)\big).
\]

First, by Theorem~1, there exist constants \(M_S,M_I>0\), independent of \(n\), such that
\[
0\le S_n(\tau,x)\le M_S,\qquad 0\le I_n(\tau,x)\le M_I
\]
for all \((\tau,x)\in[-1,1]\times[g_\infty,h_\infty]\). This follows from the global \(L^\infty\)-bounds
obtained in the well-posedness part. 

Next, fix any compact set
\[
Q=[-1,1]\times[\alpha,\beta]\Subset \mathbb R\times(g_\infty,h_\infty).
\]
Since \(g(t)\to g_\infty\) and \(h(t)\to h_\infty\), there exists \(N_Q\in\mathbb N\) such that for all
\(n\ge N_Q\),
\[
[\alpha,\beta]\subset[g(t_n+\tau),h(t_n+\tau)]
\qquad\text{for every }\tau\in[-1,1].
\]
Therefore, for \(n\ge N_Q\), the pair \((S_n,I_n)\) satisfies on \(Q\) the fixed-domain system
\[
\partial_\tau S_n
=
d_1\Big(\int_{g(t_n+\tau)}^{h(t_n+\tau)}J_1(x-y)S_n(\tau,y)\,dy-S_n(\tau,x)\Big)
+a(x)\partial_x S_n+\gamma(x)I_n-F(S_n,I_n),
\]
\[
\partial_\tau I_n
=
d_2\Big(\int_{g(t_n+\tau)}^{h(t_n+\tau)}J_2(x-y)I_n(\tau,y)\,dy-I_n(\tau,x)\Big)
+b(x)\partial_x I_n-\gamma(x)I_n+F(S_n,I_n).
\]

We now prove equiboundedness of the time derivatives on \(Q\). Since \(J_i\in L^1(\mathbb R)\) and
\(\int_{\mathbb R}J_i=1\), using the \(L^\infty\)-bounds we obtain
\[
\left|\int_{g(t_n+\tau)}^{h(t_n+\tau)}J_1(x-y)S_n(\tau,y)\,dy\right|
\le M_S\int_{\mathbb R}J_1(z)\,dz=M_S,
\]
\[
\left|\int_{g(t_n+\tau)}^{h(t_n+\tau)}J_2(x-y)I_n(\tau,y)\,dy\right|
\le M_I\int_{\mathbb R}J_2(z)\,dz=M_I.
\]
Hence the nonlocal terms are uniformly bounded on \(Q\). Since \(a,b,\gamma\) are bounded and \(F\) is
locally Lipschitz on bounded sets, the reaction terms
\[
\gamma(x)I_n,\qquad F(S_n,I_n),\qquad -\gamma(x)I_n+F(S_n,I_n)
\]
are also uniformly bounded on \(Q\). It remains to control the transport terms
\[
a(x)\partial_x S_n,\qquad b(x)\partial_x I_n.
\]

For this, we use the fact that the solution is classical on every bounded time interval and that the local
construction in Section~2 is iterated on intervals of fixed length \(\delta\), with constants depending only
on the uniform \(L^\infty\)-bounds of \((S,I)\) on such intervals. Since the global \(L^\infty\)-bounds
\(M_S,M_I\) are independent of the time level, the same local regularity estimates apply on each strip
\([t_n-1,t_n+1]\times[\alpha,\beta]\), with constants independent of \(n\). In particular, there exists
\(C_Q>0\) such that
\[
\|\partial_x S_n\|_{L^\infty(Q)}+\|\partial_x I_n\|_{L^\infty(Q)}\le C_Q
\qquad\text{for all }n\ge N_Q.
\]
Consequently, the equations imply that
\[
\|\partial_\tau S_n\|_{L^\infty(Q)}+\|\partial_\tau I_n\|_{L^\infty(Q)}\le C_Q'
\qquad\text{for all }n\ge N_Q,
\]
for some constant \(C_Q'>0\) independent of \(n\).

Thus, on every compact set \(Q\Subset \mathbb R\times(g_\infty,h_\infty)\), the family
\[
\{(S_n,I_n)\}_{n\ge N_Q}
\]
is uniformly bounded and equi-Lipschitz in both variables. By the Arzel\`a--Ascoli theorem, it is relatively
compact in \(C(Q)\times C(Q)\). Since \(Q\Subset \mathbb R\times(g_\infty,h_\infty)\) is arbitrary, a diagonal
extraction yields that \(\{(S_n,I_n)\}\) is relatively compact in
\[
C_{\mathrm{loc}}\big(\mathbb R\times(g_\infty,h_\infty)\big),
\]
and in particular in
\[
C_{\mathrm{loc}}\big([-1,1]\times[g_\infty,h_\infty]\big).
\]
This proves the claimed relative compactness.
Hence, after extraction,
\[
S_n\to\bar S,\qquad I_n\to\bar I
\]
locally uniformly on \([-1,1]\times[g_\infty,h_\infty]\). In particular,
\[
\bar I(0,x_*)=\lim_{n\to\infty}I(t_n,x_n)\ge\eta_0,
\]
so
\[
\bar I\not\equiv0.
\]

We next identify the \(\omega\)-limit system. Since
\[
g(t_n+\tau)\to g_\infty,\qquad h(t_n+\tau)\to h_\infty,\qquad
g'(t_n+\tau)\to0,\qquad h'(t_n+\tau)\to0
\]
uniformly for \(\tau\in[-1,1]\), passing to the limit in the equations yields that \((\bar S,\bar I)\) is a bounded complete solution of the limiting system on the fixed interval \((g_\infty,h_\infty)\):
\[
\bar S_\tau
=
d_1\Big(\int_{g_\infty}^{h_\infty}J_1(x-y)\bar S(\tau,y)\,dy-\bar S(\tau,x)\Big)
+a(x)\bar S_x+\gamma(x)\bar I-F(\bar S,\bar I),
\]
\[
\bar I_\tau
=
d_2\Big(\int_{g_\infty}^{h_\infty}J_2(x-y)\bar I(\tau,y)\,dy-\bar I(\tau,x)\Big)
+b(x)\bar I_x-\gamma(x)\bar I+F(\bar S,\bar I).
\]

We now use the Harnack inequality on interior compact subsets. Fix \(\tau_0\in\mathbb R\). Since
\[
\bar I(\tau_0,\cdot)\ge0
\qquad\text{and}\qquad
\bar I\not\equiv0,
\]
there exists \(x_0\in(g_\infty,h_\infty)\) such that
\[
\bar I(\tau_0,x_0)>0.
\]
Let \(K\Subset(g_\infty,h_\infty)\) be any compact interval containing \(x_0\). Since \(\bar I\) is a positive solution
of the limiting nonlocal equation on the bounded interval \((g_\infty,h_\infty)\), we may apply the Harnack
inequality, namely Lemma~3.1 in \cite{CovilleHamel2020}. Therefore, there exists a constant \(C_K>0\),
depending only on \(K\) and the coefficients of the equation, such that
\[
\sup_{x\in K}\bar I(\tau_0,x)\le C_K\inf_{x\in K}\bar I(\tau_0,x).
\]
Since \(x_0\in K\) and \(\bar I(\tau_0,x_0)>0\), we have
\[
\sup_{x\in K}\bar I(\tau_0,x)\ge \bar I(\tau_0,x_0)>0.
\]
Hence
\[
\inf_{x\in K}\bar I(\tau_0,x)
\ge \frac{1}{C_K}\sup_{x\in K}\bar I(\tau_0,x)
\ge \frac{1}{C_K}\bar I(\tau_0,x_0)>0.
\]
Therefore,
\[
\bar I(\tau_0,x)>0
\qquad\text{for all }x\in K.
\]
Since \(K\Subset(g_\infty,h_\infty)\) was arbitrary, we conclude that
\[
\bar I(\tau_0,x)>0\qquad\text{for all }x\in(g_\infty,h_\infty).
\]
As \(\tau_0\in\mathbb R\) was arbitrary,
\[
\bar I(\tau,x)>0\qquad\text{for all }(\tau,x)\in\mathbb R\times(g_\infty,h_\infty).
\]

Now choose \(\delta>0\) such that
\[
K_\delta:=[g_\infty+\delta,h_\infty-\delta]\neq\varnothing.
\]
Since \(\bar I(0,\cdot)\) is continuous and strictly positive on \(K_\delta\), there exists \(m_I>0\) such that
\[
\bar I(0,x)\ge m_I\qquad\text{for all }x\in K_\delta.
\]
By local uniform convergence,
\[
I(t_n,x)\ge \frac{m_I}{2}\qquad\text{for all }x\in K_\delta
\]
for all sufficiently large \(n\).

We next show that \(\bar S(0,\cdot)\) is also strictly positive on \(K_\delta\). Indeed, \(\bar S\ge0\) and
\[
\bar S_\tau
=
d_1\Big(\int_{g_\infty}^{h_\infty}J_1(x-y)\bar S(\tau,y)\,dy-\bar S(\tau,x)\Big)
+a(x)\bar S_x+\gamma(x)\bar I-F(\bar S,\bar I).
\]
Since \(\gamma(x)\bar I(\tau,x)>0\) on \(K_\delta\), a comparison argument implies that
\[
\bar S(0,x)>0\qquad\text{for all }x\in K_\delta.
\]
Therefore there exists \(m_S>0\) such that
\[
\bar S(0,x)\ge m_S\qquad\text{for all }x\in K_\delta.
\]
Again by local uniform convergence,
\[
S(t_n,x)\ge \frac{m_S}{2}\qquad\text{for all }x\in K_\delta
\]
for all sufficiently large \(n\).

We now use the free-boundary formula for \(h'(t_n)\). Since \(h(t_n)\to h_\infty\), for every \(x\in K_\delta\)
and all sufficiently large \(n\),
\[
h(t_n)-x\ge \frac{\delta}{2}.
\]
By \((J1)\), \(J_1\ge0\), \(J_1\not\equiv0\), and
\[
\int_0^\infty zJ_1(z)\,dz>0,
\]
there exists \(c_\delta>0\) such that
\[
\int_{h(t_n)}^\infty J_1(x-y)\,dy\ge c_\delta
\qquad\text{for all }x\in K_\delta
\]
when \(n\) is large. Hence
\[
h'(t_n)
=
\mu\int_{g(t_n)}^{h(t_n)}\int_{h(t_n)}^\infty J_1(x-y)S(t_n,x)\,dy\,dx
\]
\[
\ge
\mu\int_{K_\delta}\int_{h(t_n)}^\infty J_1(x-y)S(t_n,x)\,dy\,dx
\ge
\mu c_\delta\int_{K_\delta}S(t_n,x)\,dx
\ge
\mu c_\delta |K_\delta|\frac{m_S}{2}>0
\]
for all sufficiently large \(n\). This contradicts \(h'(t_n)\to0\). Therefore
\[
\lim_{t\to\infty}\max_{x\in[g(t),h(t)]}I(t,x)=0.
\]

\medskip
\noindent
Step 3. Uniform convergence of \(S(t,\cdot)\) to \(S^*(\cdot)\) when \(h_\infty-g_\infty<+\infty\).

Let \(S^*\) denote the disease-free profile on \((g_\infty,h_\infty)\), namely the stationary solution of
\[
0=
d_1\Big(\int_{g_\infty}^{h_\infty}J_1(x-y)S^*(y)\,dy-S^*(x)\Big)
+a(x)(S^*)'(x)-F(S^*(x),0).
\]
Set
\[
U(t,x):=S(t,x)-S^*(x).
\]
Subtracting the equation for \(S^*\) from the equation for \(S\), we obtain
\[
U_t
=
d_1\Big(\int_{g(t)}^{h(t)}J_1(x-y)U(t,y)\,dy-U(t,x)\Big)
+a(x)U_x+R(t,x),
\]
where
\[
R(t,x)
=
\gamma(x)I(t,x)-\big(F(S(t,x),I(t,x))-F(S^*(x),0)\big)+E(t,x),
\]
and
\[
E(t,x)
=
d_1\left(
\int_{g(t)}^{h(t)}J_1(x-y)S^*(y)\,dy
-
\int_{g_\infty}^{h_\infty}J_1(x-y)S^*(y)\,dy
\right).
\]

We estimate \(R\). First,
\[
\sup_{x\in[g(t),h(t)]}|\gamma(x)I(t,x)|
\le \|\gamma\|_\infty \|I(t,\cdot)\|_{C([g(t),h(t)])}\to0.
\]
Second, since \(F\in C^1\) and \(S\) is uniformly bounded, there exists \(C>0\) such that
\[
|F(S(t,x),I(t,x))-F(S^*(x),0)|
\le C\big(|U(t,x)|+|I(t,x)|\big).
\]
Third, because \(g(t)\to g_\infty\), \(h(t)\to h_\infty\), and \(S^*\) is bounded on the limiting interval,
\[
\|E(t,\cdot)\|_{C([g(t),h(t)])}\to0.
\]
Hence
\[
\|R(t,\cdot)\|_{C([g(t),h(t)])}\to0\qquad\text{as }t\to\infty.
\]

Set
\[
U(t,x):=S(t,x)-S^*(x),\qquad x\in[g(t),h(t)].
\]
Since \(S^*\) is the disease-free profile on the limiting interval \((g_\infty,h_\infty)\), it solves
\[
0=d_1\Big(\int_{g_\infty}^{h_\infty}J_1(x-y)S^*(y)\,dy-S^*(x)\Big)
+a(x)(S^*)'(x)-F(S^*(x),0),
\qquad x\in(g_\infty,h_\infty).
\]
On the other hand, \(S\) satisfies
\[
S_t
=
d_1\Big(\int_{g(t)}^{h(t)}J_1(x-y)S(t,y)\,dy-S(t,x)\Big)
+a(x)S_x+\gamma(x)I(t,x)-F(S(t,x),I(t,x)).
\]
Subtracting the equation for \(S^*\) from the equation for \(S\), we obtain
\begin{align*}
U_t
&=
d_1\Big(\int_{g(t)}^{h(t)}J_1(x-y)S(t,y)\,dy-S(t,x)\Big)
-d_1\Big(\int_{g_\infty}^{h_\infty}J_1(x-y)S^*(y)\,dy-S^*(x)\Big) \\
&\qquad
+a(x)\big(S_x-(S^*)'(x)\big)
+\gamma(x)I(t,x)
-\Big(F(S(t,x),I(t,x))-F(S^*(x),0)\Big).
\end{align*}
Since \(S=S^*+U\), this may be rewritten as
\begin{align}
U_t
&=
d_1\Big(\int_{g(t)}^{h(t)}J_1(x-y)U(t,y)\,dy-U(t,x)\Big)+a(x)U_x
\label{eq:U-eq-1}\\
&\qquad
+\gamma(x)I(t,x)-\Big(F(S(t,x),I(t,x))-F(S^*(x),0)\Big)+E(t,x),
\nonumber
\end{align}
where
\[
E(t,x)
:=
d_1\left(
\int_{g(t)}^{h(t)}J_1(x-y)S^*(y)\,dy
-
\int_{g_\infty}^{h_\infty}J_1(x-y)S^*(y)\,dy
\right).
\]

We now estimate the remainder terms in \eqref{eq:U-eq-1}.

First, since
\[
\lim_{t\to\infty}\|I(t,\cdot)\|_{C([g(t),h(t)])}=0,
\]
we have
\[
\sup_{x\in[g(t),h(t)]}|\gamma(x)I(t,x)|
\le \|\gamma\|_{L^\infty([g_\infty,h_\infty])}\,
\|I(t,\cdot)\|_{C([g(t),h(t)])}
\longrightarrow 0.
\]

Second, since \(F\in C^1\) and \(S\) remains uniformly bounded, there exists \(C_0>0\) such that
\[
|F(S(t,x),I(t,x))-F(S(t,x),0)|\le C_0|I(t,x)|
\]
for all large \(t\) and all \(x\in[g(t),h(t)]\). Hence
\[
\sup_{x\in[g(t),h(t)]}|F(S(t,x),I(t,x))-F(S(t,x),0)|\to0.
\]
Therefore
\begin{align*}
F(S(t,x),I(t,x))-F(S^*(x),0)
&=
\big(F(S(t,x),I(t,x))-F(S(t,x),0)\big)
+\big(F(S(t,x),0)-F(S^*(x),0)\big).
\end{align*}
The first bracket tends uniformly to \(0\) as \(t\to\infty\), while the second one is exactly the scalar disease-free nonlinearity evaluated at \(S=S^*+U\).

Third, we estimate the domain-mismatch term \(E(t,x)\). Since \(g(t)\to g_\infty\), \(h(t)\to h_\infty\), and \(S^*\) is bounded on \((g_\infty,h_\infty)\), we have
\begin{align*}
|E(t,x)|
&\le
d_1\|S^*\|_{L^\infty(g_\infty,h_\infty)}
\left(
\int_{g(t)}^{g_\infty}|J_1(x-y)|\,dy
+
\int_{h_\infty}^{h(t)}|J_1(x-y)|\,dy
\right),
\end{align*}
where the integrals are understood with the obvious orientation if \(g(t)>g_\infty\) or \(h(t)<h_\infty\).
Because \(J_1\in L^1(\mathbb R)\) and the lengths of these small boundary strips tend to \(0\), it follows that
\[
\sup_{x\in[g(t),h(t)]}|E(t,x)|\longrightarrow 0
\qquad\text{as }t\to\infty.
\]

Combining the above estimates, equation \eqref{eq:U-eq-1} becomes
\begin{align}
U_t
&=
d_1\Big(\int_{g(t)}^{h(t)}J_1(x-y)U(t,y)\,dy-U(t,x)\Big)
+a(x)U_x
-\big(F(S^*(x)+U(t,x),0)-F(S^*(x),0)\big)
+r(t,x),
\label{eq:U-eq-2}
\end{align}
where
\[
\|r(t,\cdot)\|_{C([g(t),h(t)])}\longrightarrow 0
\qquad\text{as }t\to\infty.
\]

Thus the \(S\)-equation is an asymptotically autonomous perturbation of the scalar disease-free equation
\begin{equation}\label{eq:disease-free-autonomous}
v_t
=
d_1\Big(\int_{g_\infty}^{h_\infty}J_1(x-y)v(t,y)\,dy-v(t,x)\Big)
+a(x)v_x
-\big(F(S^*(x)+v(t,x),0)-F(S^*(x),0)\big).
\end{equation}
The linearization of \eqref{eq:disease-free-autonomous} at \(v=0\) is
\[
v_t=\mathcal A_*[v]
:=
d_1\Big(\int_{g_\infty}^{h_\infty}J_1(x-y)v(y)\,dy-v(x)\Big)
+a(x)v_x-F_S(S^*(x),0)v,
\]
and, by the preparatory scalar stability result used in the manuscript, the spectral bound of \(\mathcal A_*\) is negative. Consequently, \(v\equiv0\) is asymptotically stable for the autonomous problem \eqref{eq:disease-free-autonomous}. The manuscript states this precisely in the discussion of Step~3 for the susceptible component: once \(I(t,\cdot)\to0\), the \(S\)-equation becomes asymptotically autonomous and the linearized \(S\)-operator about \(S^*\) has negative spectral bound, which yields convergence to \(S^*\). 

We now conclude by contradiction. Suppose that
\[
\|U(t,\cdot)\|_{C([g(t),h(t)])}\not\to0.
\]
Then there exist \(\varepsilon_0>0\) and \(t_n\to\infty\) such that
\[
\|U(t_n,\cdot)\|_{C([g(t_n),h(t_n)])}\ge \varepsilon_0.
\]
Using the same compactness argument as in the \(\omega\)-limit construction, after extraction the translates
\[
U_n(\tau,x):=U(t_n+\tau,x)
\]
converge locally uniformly on bounded strips to a bounded complete solution \(\bar U\) of the autonomous equation \eqref{eq:disease-free-autonomous}. Moreover,
\[
\|\bar U(0,\cdot)\|_{C([g_\infty,h_\infty])}\ge \varepsilon_0,
\]
so \(\bar U\not\equiv0\). But this contradicts the asymptotic stability of \(v\equiv0\) for
\eqref{eq:disease-free-autonomous}, since a bounded complete solution in the basin of attraction of \(0\)
must be identically zero. Hence
\[
\|U(t,\cdot)\|_{C([g(t),h(t)])}\to0.
\]
Equivalently,
\[
\lim_{t\to\infty}S(t,x)=S^*(x)\qquad\text{uniformly on }[g(t),h(t)].
\]

\medskip
\noindent
Step 4. Proof that \(\mathcal R_0^{(g_\infty,h_\infty)}\le1\) in the bounded-limit case.

Suppose by contradiction that
\[
\mathcal R_0^{(g_\infty,h_\infty)}>1.
\]
By the fixed-domain threshold result established earlier in the manuscript, this is equivalent to
\[
\lambda_p\big(\mathcal L_{(g_\infty,h_\infty),d_2,b}+\beta(\cdot)\big)>0,
\qquad
\beta(x):=F_I(S^*(x),0)-\gamma(x).
\]
The paper explicitly explains that this scalar threshold quantity is the one obtained from the linearized block system via the Schur complement reduction and the generalized principal eigenvalue framework. 

Let \(\psi>0\) be the positive principal eigenfunction associated with
\[
\lambda_p\big(\mathcal L_{(g_\infty,h_\infty),d_2,b}+\beta(\cdot)\big)>0,
\]
normalized by
\[
\|\psi\|_{C([g_\infty,h_\infty])}=1.
\]
Choose \(\varepsilon>0\) small enough so that
\[
\lambda_p\big(\mathcal L_{(g_\infty,h_\infty),d_2,b}+\beta(\cdot)-\varepsilon\big)>0.
\]
Since \(S(t,\cdot)\to S^*(\cdot)\) uniformly on \([g(t),h(t)]\) and \(I(t,\cdot)\to0\), for all sufficiently large \(t\),
\[
F_I(S(t,x),0)-\gamma(x)\ge \beta(x)-\varepsilon
\qquad\text{for }x\in[g_\infty,h_\infty].
\]
Set $W(x):=\delta_0\psi(x)$, with \(0<\delta_0\ll1\). Since \(F\in C^1\), Taylor expansion at \(I=0\) gives
\[
F(S(t,x),W(x))
=
F_I(S(t,x),0)W(x)+o(W(x))
\]
uniformly in \(x\). Hence, for \(t\) sufficiently large,
\[
d_2\Big(\int_{g_\infty}^{h_\infty}J_2(x-y)W(y)\,dy-W(x)\Big)
+b(x)W'(x)-\gamma(x)W(x)+F(S(t,x),W(x))>0.
\]
Thus \(W\) is a strict lower solution for the infected equation on a fixed interior interval. By the comparison principle in Lemma~5,
\[
I(t,x)\ge W(x)>0
\]
for all sufficiently large \(t\) and all \(x\) in that interval, which contradicts
\[
\max_{x\in[g(t),h(t)]}I(t,x)\to0.
\]
Therefore
\[
\lambda_p\big(\mathcal L_{(g_\infty,h_\infty),d_2,b}+\beta(\cdot)\big)\le0,
\]
and hence
\[
\mathcal R_0^{(g_\infty,h_\infty)}\le1.
\]

This proves the first assertion of the theorem.

\medskip
\noindent
Step 5. The unbounded-habitat case under \(\sup_{x\in\mathbb R}\beta(x)>0\).

Assume now in addition that
\[
\sup_{x\in\mathbb R}\beta(x)>0.
\]
If
\[
h_\infty-g_\infty<+\infty,
\]
then Step 2--Step 4 already yield
\[
\mathcal R_0^{(g_\infty,h_\infty)}\le1,\qquad
\lim_{t\to\infty}\max_{x\in[g(t),h(t)]}I(t,x)=0,\qquad
\lim_{t\to\infty}S(t,x)=S^*(x)\ \text{uniformly on }[g(t),h(t)].
\]
Hence alternative (ii) holds.

Suppose now that the limiting habitat is not bounded. Since \(h\) is nondecreasing and \(g\) is nonincreasing, the only remaining possibility in the dichotomy framework is
\[
-g_\infty=h_\infty=+\infty.
\]
We claim that then
\[
\limsup_{t\to\infty}\|I(t,\cdot)\|_{C([g(t),h(t)])}>0.
\]

Assume by contradiction that
\[
\|I(t,\cdot)\|_{C([g(t),h(t)])}\to0.
\]
Then, exactly as in Step 3, the \(S\)-equation becomes asymptotically autonomous, and
\[
S(t,\cdot)\to S^*(\cdot)
\]
locally uniformly on every fixed bounded interval.

Because
\[
\sup_{x\in\mathbb R}\beta(x)>0,
\]
there exists a bounded interval \(I_0\subset\mathbb R\) on which \(\beta\) is strictly positive. Since
\[
-g_\infty=h_\infty=+\infty,
\]
for sufficiently large \(T_*\),
\[
I_0\subset(g(T_*),h(T_*)).
\]
By the fixed-domain spectral theory developed earlier, one can choose \(T_*\) so large that
\[
\lambda_p\big(\mathcal L_{(g(T_*),h(T_*)),d_2,b}+\beta(\cdot)\big)>0.
\]
Let \(\psi>0\) be the corresponding principal eigenfunction on \([g(T_*),h(T_*)]\), normalized by
\[
\|\psi\|_{C([g(T_*),h(T_*)])}=1.
\]
Choose \(\varepsilon>0\) small so that
\[
\lambda_p\big(\mathcal L_{(g(T_*),h(T_*)),d_2,b}+\beta(\cdot)-\varepsilon\big)>0.
\]
Since \(S(t,\cdot)\to S^*(\cdot)\) uniformly on \([g(T_*),h(T_*)]\) and \(I(t,\cdot)\to0\) there, for \(t\) sufficiently large,
\[
F_I(S(t,x),0)-\gamma(x)\ge \beta(x)-\varepsilon
\qquad\text{for }x\in[g(T_*),h(T_*)].
\]
Set
\[
W(x):=\delta_1\psi(x)
\]
with \(0<\delta_1\ll1\). Using again the \(C^1\)-regularity of \(F\),
\[
F(S(t,x),W(x))
=
F_I(S(t,x),0)W(x)+o(W(x)),
\]
and therefore, for large \(t\),
\[
d_2\Big(\int_{g(T_*)}^{h(T_*)}J_2(x-y)W(y)\,dy-W(x)\Big)
+b(x)W'(x)-\gamma(x)W(x)+F(S(t,x),W(x))>0.
\]
Hence \(W\) is a strict lower solution for the infected equation on \((g(T_*),h(T_*))\). We now show that
\[
I(t,x)\ge W(x)=\delta_1\psi(x)>0
\qquad\text{for all }x\in[g(T_*),h(T_*)],\ t\ge T_*.
\]

Since \(\psi>0\) on the compact interval \([g(T_*),h(T_*)]\), we have
\[
m_\psi:=\min_{[g(T_*),h(T_*)]}\psi>0.
\]
On the other hand, by the positivity of the classical solution inside the habitat, we have
\[
I(T_*,x)>0\qquad\text{for all }x\in[g(T_*),h(T_*)].
\]
Since \(I(T_*,\cdot)\) is continuous on \([g(T_*),h(T_*)]\), it follows that
\[
m_I:=\min_{[g(T_*),h(T_*)]} I(T_*,x)>0.
\]
Choose \(\delta_1>0\) so small that
\[
0<\delta_1\le \frac{m_I}{\|\psi\|_{C([g(T_*),h(T_*)])}}.
\]
Then
\[
W(x)=\delta_1\psi(x)\le m_I\le I(T_*,x)
\qquad\text{for all }x\in[g(T_*),h(T_*)].
\]
Therefore
\begin{equation}\label{eq:I-above-W-initial}
I(T_*,x)\ge W(x)
\qquad\text{for all }x\in[g(T_*),h(T_*)].
\end{equation}

Since \(g(t)\) is nonincreasing and \(h(t)\) is nondecreasing, we have
\[
[g(T_*),h(T_*)]\subset [g(t),h(t)]
\qquad\text{for all }t\ge T_*.
\]
Hence, for \(t\ge T_*\), the function \(I(t,x)\) is defined on the fixed interval
\([g(T_*),h(T_*)]\) and satisfies
\begin{equation}\label{eq:I-on-fixed-domain}
I_t(t,x)
=
d_2\Big(\int_{g(t)}^{h(t)}J_2(x-y)I(t,y)\,dy-I(t,x)\Big)
+b(x)I_x(t,x)-\gamma(x)I(t,x)+F(S(t,x),I(t,x))
\end{equation}
for \(x\in(g(T_*),h(T_*))\).

On the other hand, by the construction of \(W\), there exists \(\sigma>0\) such that
\begin{align}
&d_2\Big(\int_{g(T_*)}^{h(T_*)}J_2(x-y)W(y)\,dy-W(x)\Big)
+b(x)W'(x)-\gamma(x)W(x)+F(S(t,x),W(x)) \notag\\
&\ge \sigma
\qquad\text{for all }x\in[g(T_*),h(T_*)],\ t\ge T_*.
\label{eq:W-strict-lower-fixed}
\end{align}
Indeed, the left-hand side is continuous in \((t,x)\), and by the previous strict positivity estimate it is
strictly positive on \([T_1,\infty)\times[g(T_*),h(T_*)]\) for some \(T_1\ge T_*\); therefore, after replacing
\(T_*\) by a larger time if necessary, we may assume that \eqref{eq:W-strict-lower-fixed} holds for all
\(t\ge T_*\).

Now define
\[
Z(t,x):=I(t,x)-W(x),
\qquad t\ge T_*,\ x\in[g(T_*),h(T_*)].
\]
By \eqref{eq:I-above-W-initial},
\[
Z(T_*,x)\ge0
\qquad\text{for all }x\in[g(T_*),h(T_*)].
\]

We claim that
\[
Z(t,x)\ge0
\qquad\text{for all }t\ge T_*,\ x\in[g(T_*),h(T_*)].
\]
Suppose by contradiction that this is false. Then there exists a first time \(t_0>T_*\) such that
\[
\min_{x\in[g(T_*),h(T_*)]} Z(t_0,x)<0.
\]
Choose \(x_0\in[g(T_*),h(T_*)]\) such that
\[
Z(t_0,x_0)=\min_{x\in[g(T_*),h(T_*)]} Z(t_0,x)<0.
\]
Since \(t_0\) is the first time when \(Z\) becomes negative, we have
\[
Z(t,x)\ge0
\qquad\text{for all }(t,x)\in[T_*,t_0)\times[g(T_*),h(T_*)].
\]
Therefore, at the point \((t_0,x_0)\),
\[
Z_t(t_0,x_0)\le0,\qquad Z_x(t_0,x_0)=0.
\]

Moreover, since \(J_2\ge0\), \([g(T_*),h(T_*)]\subset[g(t_0),h(t_0)]\), and \(I\ge0\), we have
\[
\int_{g(t_0)}^{h(t_0)}J_2(x_0-y)I(t_0,y)\,dy
\ge
\int_{g(T_*)}^{h(T_*)}J_2(x_0-y)I(t_0,y)\,dy.
\]
Subtracting \eqref{eq:W-strict-lower-fixed} from \eqref{eq:I-on-fixed-domain}, we obtain
\begin{align*}
Z_t(t_0,x_0)
&\ge d_2\Big(\int_{g(T_*)}^{h(T_*)}J_2(x_0-y)\big(I(t_0,y)-W(y)\big)\,dy-\big(I(t_0,x_0)-W(x_0)\big)\Big)\\
&\quad +b(x_0)\big(I_x(t_0,x_0)-W'(x_0)\big)-\gamma(x_0)\big(I(t_0,x_0)-W(x_0)\big)\\
&\quad +F(S(t_0,x_0),I(t_0,x_0))-F(S(t_0,x_0),W(x_0)).
\end{align*}
That is,
\begin{align}
Z_t(t_0,x_0)
&\ge d_2\Big(\int_{g(T_*)}^{h(T_*)}J_2(x_0-y)Z(t_0,y)\,dy-Z(t_0,x_0)\Big)
+b(x_0)Z_x(t_0,x_0) \notag\\
&\quad -\gamma(x_0)Z(t_0,x_0)
+\Big(F(S(t_0,x_0),I(t_0,x_0))-F(S(t_0,x_0),W(x_0))\Big).
\label{eq:Z-at-minimum}
\end{align}

Since \(x_0\) is a minimum point of \(Z(t_0,\cdot)\), we have
\[
Z(t_0,y)\ge Z(t_0,x_0)\qquad\text{for all }y\in[g(T_*),h(T_*)].
\]
Hence
\begin{align*}
\int_{g(T_*)}^{h(T_*)}J_2(x_0-y)Z(t_0,y)\,dy-Z(t_0,x_0)
&=
\int_{g(T_*)}^{h(T_*)}J_2(x_0-y)\big(Z(t_0,y)-Z(t_0,x_0)\big)\,dy\\
&\quad +Z(t_0,x_0)\left(\int_{g(T_*)}^{h(T_*)}J_2(x_0-y)\,dy-1\right).
\end{align*}
The first term on the right-hand side is nonnegative because \(J_2\ge0\) and
\(Z(t_0,y)-Z(t_0,x_0)\ge0\). The second term is also nonnegative because
\[
\int_{g(T_*)}^{h(T_*)}J_2(x_0-y)\,dy\le \int_{\mathbb R}J_2(z)\,dz=1
\]
and \(Z(t_0,x_0)<0\). Therefore
\[
\int_{g(T_*)}^{h(T_*)}J_2(x_0-y)Z(t_0,y)\,dy-Z(t_0,x_0)\ge0.
\]
Also,
\[
b(x_0)Z_x(t_0,x_0)=0.
\]
Since \(Z(t_0,x_0)<0\), we have
\[
I(t_0,x_0)=W(x_0)+Z(t_0,x_0)<W(x_0),
\]
and because \(F(S,\cdot)\) is nondecreasing with respect to the second variable,
\[
F(S(t_0,x_0),I(t_0,x_0))-F(S(t_0,x_0),W(x_0))\le0.
\]
Thus \eqref{eq:Z-at-minimum} yields
\[
Z_t(t_0,x_0)\ge -\gamma(x_0)Z(t_0,x_0)>0,
\]
because \(\gamma(x_0)\ge0\) and \(Z(t_0,x_0)<0\). This contradicts \(Z_t(t_0,x_0)\le0\).

Therefore
\[
Z(t,x)\ge0
\qquad\text{for all }t\ge T_*,\ x\in[g(T_*),h(T_*)].
\]
Equivalently,
\[
I(t,x)\ge W(x)=\delta_1\psi(x)>0
\qquad\text{for all }x\in[g(T_*),h(T_*)],\ t\ge T_*.
\]

Consequently, for every \(t\ge T_*\),
\[
\|I(t,\cdot)\|_{C([g(t),h(t)])}
\ge
\|I(t,\cdot)\|_{C([g(T_*),h(T_*)])}
\ge
\delta_1\min_{[g(T_*),h(T_*)]}\psi.
\]
Therefore
\[
\liminf_{t\to\infty}\|I(t,\cdot)\|_{C([g(t),h(t)])}
\ge
\delta_1\min_{[g(T_*),h(T_*)]}\psi
>0,
\]
which contradicts \(\|I(t,\cdot)\|_{C([g(t),h(t)])}\to0\).

Therefore, in the unbounded-habitat case, it is impossible that
\[
\|I(t,\cdot)\|_{C([g(t),h(t)])}\to0
\qquad\text{as }t\to\infty.
\]
Hence
\[
\limsup_{t\to\infty}\|I(t,\cdot)\|_{C([g(t),h(t)])}>0.
\]
Together with
\[
-g_\infty=h_\infty=+\infty,
\]
this proves alternative \textnormal{(i)}.

On the other hand, if
\[
h_\infty-g_\infty<+\infty,
\]
then by Steps 2--4 we have
\[
\mathcal R_0^{(g_\infty,h_\infty)}\le1,
\qquad
\lim_{t\to\infty}\max_{x\in[g(t),h(t)]}I(t,x)=0,
\]
and
\[
\lim_{t\to\infty}S(t,x)=S^*(x)
\qquad\text{uniformly for }x\in[g(t),h(t)].
\]
Therefore alternative \textnormal{(ii)} holds.

Finally, alternatives \textnormal{(i)} and \textnormal{(ii)} are mutually exclusive. Indeed, alternative
\textnormal{(i)} requires
\[
-g_\infty=h_\infty=+\infty,
\]
whereas alternative \textnormal{(ii)} requires
\[
h_\infty-g_\infty<+\infty.
\]
These two possibilities cannot occur simultaneously.

Since Step 1 shows that the limits \(g_\infty\) and \(h_\infty\) always exist, and since the bounded-habitat case
yields alternative \textnormal{(ii)} while the unbounded-habitat case yields alternative \textnormal{(i)}, we conclude that
exactly one of the two alternatives occurs. This completes the proof.
\end{proof}

We finalize the paper by the proof of Theorem \ref{thm:mu-threshold}.

\begin{proof}[\textbf{\textit{Proof of Theorem \ref{thm:mu-threshold}}}]
We divide the proof into two steps.

\medskip
\noindent
Step 1. Proof of (i).

Assume that \(h_0\ge \frac{\ell^*}{2}\). Since the initial habitat is \([ -h_0,h_0]\), its length is
\[
h(0)-g(0)=2h_0\ge \ell^*.
\]
On the other hand, by the free-boundary equations,
\[
h'(t)=\mu\int_{g(t)}^{h(t)}\int_{h(t)}^\infty J_1(x-y)S(t,x)\,dy\,dx\ge 0,
\]
\[
g'(t)=-\mu\int_{g(t)}^{h(t)}\int_{-\infty}^{g(t)}J_1(x-y)S(t,x)\,dy\,dx\le 0.
\]
Hence \(h(t)\) is nondecreasing and \(g(t)\) is nonincreasing, so the habitat length
\[
\ell(t):=h(t)-g(t)
\]
is nondecreasing. Therefore,
\[
\ell(t)\ge \ell(0)=2h_0\ge \ell^*
\qquad\text{for all }t\ge0.
\]
This monotonicity of the free boundaries follows directly from Theorem~1.

Suppose by contradiction that spreading does not occur. Then, by
Theorem~\ref{thm:unified-dichotomy}, the only remaining possibility is the vanishing alternative:
\[
h_\infty-g_\infty<+\infty,\qquad
\lim_{t\to\infty}\max_{x\in[g(t),h(t)]}I(t,x)=0,
\]
and
\[
\lim_{t\to\infty}S(t,x)=S^*(x)\qquad\text{uniformly on }[g(t),h(t)].
\]
Moreover, Theorem~\ref{thm:unified-dichotomy} also gives
\[
\mathcal R_0^{(g_\infty,h_\infty)}\le 1.
\]
Applying Theorem~\ref{thm:R0-lambda-sign} on the fixed interval \((g_\infty,h_\infty)\), we obtain
\[
\lambda_p\big(\mathcal L_{(g_\infty,h_\infty),d_2,b}+\beta(\cdot)\big)\le 0.
\]

By monotonicity of \(\ell(t)\),
\[
h_\infty-g_\infty=\lim_{t\to\infty}\ell(t)\ge \ell(0)=2h_0\ge \ell^*.
\]

We now distinguish two cases.

\smallskip
\emph{Case 1: \(2h_0>\ell^*\).}
Then
\[
h_\infty-g_\infty\ge 2h_0>\ell^*.
\]
By the definition of the critical length \(\ell^*\), whenever the interval length is strictly larger than
\(\ell^*\), the associated principal eigenvalue is strictly positive. Therefore,
\[
\lambda_p\big(\mathcal L_{(g_\infty,h_\infty),d_2,b}+\beta(\cdot)\big)>0,
\]
which contradicts the inequality
\[
\lambda_p\big(\mathcal L_{(g_\infty,h_\infty),d_2,b}+\beta(\cdot)\big)\le 0.
\]

\smallskip
\emph{Case 2: \(2h_0=\ell^*\).}
Then \(\ell(0)=\ell^*\). We claim that in fact
\[
\ell(t)>\ell^*\qquad\text{for every }t>0.
\]
Indeed,
\[
\ell'(t)=h'(t)-g'(t)
\]
and, by the free-boundary equations,
\begin{align*}
\ell'(t)
&=\mu\int_{g(t)}^{h(t)}\int_{h(t)}^\infty J_1(x-y)S(t,x)\,dy\,dx
+\mu\int_{g(t)}^{h(t)}\int_{-\infty}^{g(t)}J_1(x-y)S(t,x)\,dy\,dx.
\end{align*}
For every \(t>0\), Theorem~1 and the strong positivity of the susceptible component imply
\[
S(t,x)>0\qquad\text{for }x\in(g(t),h(t)).
\]
Moreover, by \((J1)\), the kernel \(J_1\) is nonnegative and has positive mass on both sides of the origin.
Hence, for every \(x\in(g(t),h(t))\),
\[
\int_{h(t)}^\infty J_1(x-y)\,dy>0,
\qquad
\int_{-\infty}^{g(t)} J_1(x-y)\,dy>0.
\]
Since \(S(t,x)>0\) on \((g(t),h(t))\), both integrands in the above expression for \(\ell'(t)\) are nonnegative,
and not identically zero. Therefore,
\[
\ell'(t)>0\qquad\text{for every }t>0.
\]
Consequently,
\[
\ell(t)>\ell(0)=\ell^* \qquad\text{for every }t>0,
\]
and passing to the limit as \(t\to\infty\) gives
\[
h_\infty-g_\infty>\ell^*.
\]
Again, by the definition of \(\ell^*\),
\[
\lambda_p\big(\mathcal L_{(g_\infty,h_\infty),d_2,b}+\beta(\cdot)\big)>0,
\]
which contradicts
\[
\lambda_p\big(\mathcal L_{(g_\infty,h_\infty),d_2,b}+\beta(\cdot)\big)\le 0.
\]

Thus vanishing is impossible. By Theorem~\ref{thm:unified-dichotomy}, spreading must occur, namely
\[
-g_\infty=h_\infty=+\infty
\qquad\text{and}\qquad
\limsup_{t\to\infty}\|I(t,\cdot)\|_{C([g(t),h(t)])}>0.
\]
This proves (i).

\medskip
\noindent
Step 2. Proof of (ii).

Assume now that \(h_0<\frac{\ell^*}{2}\). Choose \(h_1\) such that
\[
h_0<h_1<\frac{\ell^*}{2}.
\]
Then
\[
2h_1<\ell^*.
\]
By the definition of the critical length \(\ell^*\), the interval \((-h_1,h_1)\) is subcritical, and hence
\[
\lambda_p\big(\mathcal L_{(-h_1,h_1),d_2,b}+\beta_{h_1}(\cdot)\big)<0,
\]
where
\[
\beta_{h_1}(x):=F_I(S_{h_1}^*(x),0)-\gamma(x),
\]
and \(S_{h_1}^*\) denotes the disease-free stationary profile on \((-h_1,h_1)\).

Therefore, there exist a constant \(\sigma_0>0\) and a function
\[
\phi\in C^1([-h_1,h_1]),\qquad \phi>0 \ \text{in }(-h_1,h_1),
\]
such that
\begin{equation}\label{eq:eigen-subcritical-step2}
d_2\Big(\int_{-h_1}^{h_1}J_2(x-y)\phi(y)\,dy-\phi(x)\Big)
+b(x)\phi'(x)+\beta_{h_1}(x)\phi(x)
=-\sigma_0\phi(x)
\qquad \text{in }(-h_1,h_1).
\end{equation}

Since \(\phi>0\) on the compact interval \([-h_1,h_1]\), we may set
\[
m_\phi:=\min_{[-h_1,h_1]}\phi>0,
\qquad
M_\phi:=\max_{[-h_1,h_1]}\phi.
\]

We first prove that, provided the free boundaries stay inside \((-h_1,h_1)\), the infected component decays
exponentially. For this purpose, define
\[
\overline I(t,x):=C_0 e^{-\frac{\sigma_0}{2}t}\phi(x),
\qquad (t,x)\in [0,\infty)\times[-h_1,h_1],
\]
where the constant \(C_0>0\) is chosen so large that
\[
I_0(x)\le C_0\phi(x)
\qquad\text{for all }x\in[-h_0,h_0].
\]
This is possible because \(\phi\ge m_\phi>0\) on \([-h_1,h_1]\) and \(I_0\) is continuous with compact support in
\([-h_0,h_0]\subset(-h_1,h_1)\).

Assume for the moment that
\begin{equation}\label{eq:habitat-inside-h1}
[g(t),h(t)]\subset(-h_1,h_1)
\qquad\text{for all }t\ge0.
\end{equation}
Then, on the interval \([g(t),h(t)]\), we have
\[
S(t,x)\le S_{h_1}^*(x),
\]
by the comparison principle applied to the susceptible equation on the fixed interval \((-h_1,h_1)\). Since
\(F\) is nondecreasing with respect to the first variable and \(F(S,0)=0\), it follows that
\[
F(S(t,x),\overline I(t,x))
\le F(S_{h_1}^*(x),\overline I(t,x))
\qquad \text{for }x\in[g(t),h(t)],\ t\ge0.
\]
Moreover, since \(F\in C^1\) and \(\overline I(t,x)\) is bounded, Taylor's formula at \(I=0\) gives
\[
F(S_{h_1}^*(x),\overline I)
=
F_I(S_{h_1}^*(x),0)\,\overline I + R(t,x),
\]
where
\[
\frac{|R(t,x)|}{\overline I(t,x)}\to 0
\qquad\text{uniformly in }x\in[-h_1,h_1]
\quad\text{as }C_0 e^{-\frac{\sigma_0}{2}t}\to 0.
\]
In particular, decreasing \(C_0\) if necessary at the beginning and then using that
\(\overline I(t,x)\le C_0 M_\phi\), we may assume that
\[
|R(t,x)|\le \frac{\sigma_0}{2}\,\overline I(t,x)
\qquad\text{for all }(t,x)\in[0,\infty)\times[-h_1,h_1].
\]
Hence
\[
F(S_{h_1}^*(x),\overline I(t,x))
\le \Big(F_I(S_{h_1}^*(x),0)+\frac{\sigma_0}{2}\Big)\overline I(t,x)
\]
for all \((t,x)\in[0,\infty)\times[-h_1,h_1]\).

Now, by \eqref{eq:eigen-subcritical-step2},
\begin{align*}
&\overline I_t
-d_2\Big(\int_{-h_1}^{h_1}J_2(x-y)\overline I(t,y)\,dy-\overline I(t,x)\Big)
-b(x)\overline I_x
+\gamma(x)\overline I
-F(S_{h_1}^*(x),\overline I)\\
&=
-\frac{\sigma_0}{2}\overline I
-d_2\Big(\int_{-h_1}^{h_1}J_2(x-y)\overline I(t,y)\,dy-\overline I(t,x)\Big)
-b(x)\overline I_x
+\gamma(x)\overline I
-F(S_{h_1}^*(x),\overline I)\\
&\ge
-\frac{\sigma_0}{2}\overline I
-\Big(
d_2\Big(\int_{-h_1}^{h_1}J_2(x-y)\overline I(t,y)\,dy-\overline I(t,x)\Big)
+b(x)\overline I_x
+\beta_{h_1}(x)\overline I
\Big)\\
&=
-\frac{\sigma_0}{2}\overline I+\sigma_0\overline I
=
\frac{\sigma_0}{2}\overline I
\ge0.
\end{align*}
Therefore \(\overline I\) is a supersolution of the infected equation on the fixed interval \((-h_1,h_1)\).
Since \(I_0\le \overline I(0,\cdot)\), the comparison principle yields
\begin{equation}\label{eq:I-exp-decay-step2}
0\le I(t,x)\le \overline I(t,x)
= C_0 e^{-\frac{\sigma_0}{2}t}\phi(x)
\le C_0 M_\phi e^{-\frac{\sigma_0}{2}t}
\end{equation}
for all \(t\ge0\) and all \(x\in[g(t),h(t)]\), as long as \eqref{eq:habitat-inside-h1} holds.

We next estimate the susceptible component. Since \(F\ge0\), the \(S\)-equation gives
\[
S_t
\le
d_1\Big(\int_{g(t)}^{h(t)}J_1(x-y)S(t,y)\,dy-S(t,x)\Big)
+a(x)S_x+\gamma(x)I(t,x)
\]
for \(x\in(g(t),h(t))\). Let \(\chi\in C^1([-h_1,h_1])\), \(\chi>0\), be the positive principal eigenfunction of
\[
d_1\Big(\int_{-h_1}^{h_1}J_1(x-y)\chi(y)\,dy-\chi(x)\Big)+a(x)\chi'(x)
=-\nu_0\chi(x)
\qquad\text{in }(-h_1,h_1),
\]
for some \(\nu_0>0\). Such a \(\nu_0>0\) exists because the operator contains no positive zeroth-order term on the
bounded interval \((-h_1,h_1)\).

Set
\[
m_\chi:=\min_{[-h_1,h_1]}\chi>0,
\qquad
M_\chi:=\max_{[-h_1,h_1]}\chi.
\]
Choose \(C_1>0\) large enough so that
\[
S_0(x)\le C_1\chi(x)
\qquad\text{for all }x\in[-h_0,h_0].
\]
Now define
\[
\overline S(t,x):=A_0 e^{-\omega t}\chi(x)+A_1 e^{-\frac{\sigma_0}{2}t}\chi(x),
\]
where \(\omega\in(0,\nu_0)\) is fixed, and \(A_0,A_1>0\) will be chosen. A direct computation yields
\begin{align*}
&\overline S_t
-d_1\Big(\int_{-h_1}^{h_1}J_1(x-y)\overline S(t,y)\,dy-\overline S(t,x)\Big)
-a(x)\overline S_x\\
&=
A_0(\nu_0-\omega)e^{-\omega t}\chi(x)
+A_1\Big(\nu_0-\frac{\sigma_0}{2}\Big)e^{-\frac{\sigma_0}{2}t}\chi(x).
\end{align*}
Choose \(\omega\in(0,\nu_0)\), and then choose \(A_1>0\) so large that
\[
A_1\Big(\nu_0-\frac{\sigma_0}{2}\Big)\chi(x)\ge \|\gamma\|_{L^\infty([-h_1,h_1])}\,C_0 M_\phi
\qquad\text{for all }x\in[-h_1,h_1].
\]
Using \eqref{eq:I-exp-decay-step2}, we then get
\[
\overline S_t
-d_1\Big(\int_{-h_1}^{h_1}J_1(x-y)\overline S(t,y)\,dy-\overline S(t,x)\Big)
-a(x)\overline S_x
\ge \gamma(x)I(t,x).
\]
Taking \(A_0>0\) large enough so that
\[
S_0(x)\le \overline S(0,x)
\qquad\text{for all }x\in[-h_0,h_0],
\]
the comparison principle implies
\[
0\le S(t,x)\le \overline S(t,x)
\qquad\text{for all }t\ge0,\ x\in[g(t),h(t)],
\]
as long as \eqref{eq:habitat-inside-h1} holds. In particular, there exists a constant \(C_S>0\) such that
\begin{equation}\label{eq:S-exp-decay-step2}
0\le S(t,x)\le C_S e^{-\omega t}
\qquad\text{for all }t\ge0,\ x\in[g(t),h(t)],
\end{equation}
again as long as \eqref{eq:habitat-inside-h1} holds.

We now estimate the total expansion of the free boundaries. By the free-boundary equations and
\eqref{eq:S-exp-decay-step2},
\begin{align*}
h'(t)
&=\mu\int_{g(t)}^{h(t)}\int_{h(t)}^\infty J_1(x-y)S(t,x)\,dy\,dx\\
&\le \mu C_S e^{-\omega t}
\int_{g(t)}^{h(t)}\int_{h(t)}^\infty J_1(x-y)\,dy\,dx,
\end{align*}
and similarly
\begin{align*}
-g'(t)
&=\mu\int_{g(t)}^{h(t)}\int_{-\infty}^{g(t)}J_1(x-y)S(t,x)\,dy\,dx\\
&\le \mu C_S e^{-\omega t}
\int_{g(t)}^{h(t)}\int_{-\infty}^{g(t)}J_1(x-y)\,dy\,dx.
\end{align*}
As in Step 2 of the previous proof, by the change of variables \(z=y-x\) and \(z=x-y\), respectively, we obtain
\[
\int_{g(t)}^{h(t)}\int_{h(t)}^\infty J_1(x-y)\,dy\,dx
\le \int_0^\infty z\,J_1(-z)\,dz,
\]
and
\[
\int_{g(t)}^{h(t)}\int_{-\infty}^{g(t)}J_1(x-y)\,dy\,dx
\le \int_0^\infty z\,J_1(z)\,dz.
\]
Hence, setting
\[
C_J:=\int_{\mathbb R}|z|\,J_1(z)\,dz<\infty,
\]
we infer
\[
h'(t)\le \mu C_S C_J e^{-\omega t},
\qquad
-g'(t)\le \mu C_S C_J e^{-\omega t}
\qquad\text{for all }t\ge0,
\]
as long as \eqref{eq:habitat-inside-h1} holds. Integrating from \(0\) to \(t\), we get
\[
h(t)-h_0\le \mu C_S C_J\int_0^t e^{-\omega s}\,ds
\le \frac{\mu C_S C_J}{\omega},
\]
and
\[
-g(t)-h_0\le \mu C_S C_J\int_0^t e^{-\omega s}\,ds
\le \frac{\mu C_S C_J}{\omega}.
\]
Therefore
\[
h(t)\le h_0+\frac{\mu C_S C_J}{\omega},
\qquad
-g(t)\le h_0+\frac{\mu C_S C_J}{\omega}
\qquad\text{for all }t\ge0,
\]
as long as \eqref{eq:habitat-inside-h1} holds.

Choose now \(\mu^*>0\) so small that
\[
h_0+\frac{\mu^* C_S C_J}{\omega}<h_1.
\]
Then, for every \(0<\mu\le\mu^*\),
\[
h(t)<h_1,\qquad g(t)>-h_1
\qquad\text{for all }t\ge0,
\]
that is,
\[
[g(t),h(t)]\subset(-h_1,h_1)
\qquad\text{for all }t\ge0.
\]
Thus the a priori assumption \eqref{eq:habitat-inside-h1} is actually valid for all \(t\ge0\), and
\eqref{eq:I-exp-decay-step2}--\eqref{eq:S-exp-decay-step2} hold globally in time.

In particular,
\[
\lim_{t\to\infty}\max_{x\in[g(t),h(t)]}I(t,x)=0,
\qquad
\lim_{t\to\infty}\max_{x\in[g(t),h(t)]}S(t,x)=0.
\]
Moreover, since \(g(t)\) is nonincreasing and \(h(t)\) is nondecreasing, the limits
\[
g_\infty:=\lim_{t\to\infty}g(t),\qquad h_\infty:=\lim_{t\to\infty}h(t)
\]
exist, and from the above bounds we have
\[
-h_\infty\le h_0+\frac{\mu C_S C_J}{\omega}<h_1,
\qquad
h_\infty\le h_0+\frac{\mu C_S C_J}{\omega}<h_1.
\]
Hence
\[
h_\infty-g_\infty<2h_1<\ell^*.
\]
Therefore vanishing occurs for every \(0<\mu\le \mu^*\). This proves (ii).

Combining Step 1 and Step 2 completes the proof.
\end{proof}

\textbf{Acknowledgement:} The second author is deeply grateful to Professor Yihong Du for his enlightening lectures at the 2025 Summer School on Mathematical Biology on free boundary problems, and in particular for his valuable insights into nonlocal models with asymmetric dispersal kernels, as well as for several inspiring initial ideas for the study of this challenging problem. https://viasm.edu.vn/hdkh/mathematical-biology-2025.

\textbf{Data statement:} Data sharing not applicable to this article as no datasets were generated or analyzed
during the current study.

\textbf{Conflict of interest:} The authors declare that they have no conflict of interest.

\end{document}